\numberwithin{equation}{section}
\theoremstyle{plain}
\newtheorem*{theorem*}{Theorem}
\newtheorem{theorem}{Theorem}
\numberwithin{theorem}{section}
\newtheorem{proposition}[theorem]{Proposition}
\newtheorem{lemma}[theorem]{Lemma}
\newtheorem{corollary}[theorem]{Corollary}
\newtheorem{conjecture}[theorem]{Conjecture}
\theoremstyle{definition}
\newtheorem{definition}[theorem]{Definition}
\newtheorem{remark}[theorem]{Remark}
\newtheorem{example}[theorem]{Example}
\newcommand{\C}{\mathbb{C}}
\newcommand{\Z}{\mathbb{Z}}
\newcommand{\PP}{\mathbb{P}}
\newcommand{\R}{\mathbb{R}}
\newcommand{\mR}{\mathsmaller{\mathbb{R}}}
\newcommand{\mC}{\mathsmaller{\mathbb{C}}}
\newcommand{\blue}[1]{{\color{blue}#1}}
\newcommand*{\rom}[1]{\expandafter\@slowromancap\romannumeral #1@}
\date{}
\begin{document}

\author{Giorgio Ottaviani}
\address{Dipartimento di Matematica e Informatica ``Ulisse Dini'', Universit\`a di Firenze, Italy}
\email{giorgio.ottaviani@unifi.it}

\author{Luca Sodomaco}
\address{Department of Mathematics and Systems Analysis, Aalto University, Espoo, Finland}
\email{luca.sodomaco@aalto.fi}

\author{Emanuele Ventura}
\address{Mathematisches Institut, Universit\"{a}t Bern, Switzerland}
\email{emanueleventura.sw@gmail.com; emanuele.ventura@math.unibe.ch}

\subjclass[2020]{14N07; 14C17; 14P05; 15A72; 58K05.}
\keywords{Segre products, Tensors, ED degrees, Asymptotics, Hyperdeterminants, Dual varieties.}

\title[Asymptotics of degrees and ED degrees of Segre products]{Asymptotics of degrees and ED degrees of Segre products}

\begin{abstract}
Two fundamental invariants attached to a projective variety are its classical algebraic degree and its Euclidean Distance degree (ED degree). 
In this paper, we study the asymptotic behavior of these two degrees of some Segre products and their dual varieties. We analyze the asymptotics of degrees of (hypercubical) hyperdeterminants, the dual hypersurfaces to Segre varieties. 

We offer an alternative viewpoint on the stabilization of the ED degree of some Segre varieties. Although this phenomenon was incidentally known from Friedland-Ottaviani's formula expressing the number of singular vector tuples of a general tensor, our approach provides a geometric explanation. 

Finally, we establish the stabilization of the degree of the dual variety of a Segre product $X\times Q_{n}$, where $X$ is a projective variety and $Q_n\subset \PP^{n+1}$ is a smooth quadric hypersurface. 
\end{abstract}

\maketitle

\section{Introduction}

Let $V^\mR$ be a real vector space equipped with a distance function and let $X\subset \PP(V^\mR\otimes_\mR\C)$ be a complex projective variety. Two fundamental features of $X$ are its degree and its {\it Euclidean Distance degree} (ED degree). While the first is one of the basic numerical invariants of an algebraic variety, the second was recently introduced in \cite{DHOST} and since then has found several interesting applications in Pure and Applied Algebraic Geometry \cite{DH, QCL, DLOT, DOT, BKL, HL, HW}.

The aim of this paper is to initiate a study of the asymptotic behavior of these two important notions attached to some special varieties and their duals. These varieties are {\it Segre products}, i.e. images of direct products of projective varieties through the Segre embedding. 
Our perspective is naturally inspired by the recently emerging interest in {\it stabilization} properties in Algebraic Geometry and Representation Theory, seeking for results about large families of related varieties at once, rather than specific instances. The discussion around \cite[Conjecture 1.3]{DH} motivated this work. Indeed we found the geometrical explanation sought in \cite{DH} (see Corollary \ref{cor: stabED} and its proof). We hope that the specialization technique of this paper may open the road towards new results, including the above conjecture.

Whenever a classical Segre variety $\PP^{k_1}\times \cdots \times \PP^{k_d}\subset \PP(\C^{k_1+1}\otimes \cdots \otimes \C^{k_d+1})$ is not {\it dual defective}, i.e. its dual variety is a hypersurface, the polynomial defining the latter is the {\it hyperdeterminant} \cite[Chapter 14]{GKZ}.
These higher analogues of matrix determinants are of utmost importance and yet their properties are far from being completely understood.
Hyperdeterminants play a prominent role behind all the results in this paper. 

We now showcase our results. We start off from the very classical (hypercubical) hyperdeterminants of the Segre variety $\mathbb P(\mathbb C^{n+1})^{\times d}$ establishing an asymptotic formula of its degree as the dimension $n+1$ of the vector space goes to infinity.

\begin{theorem*}[{\bf Theorem \ref{thm: hyperdetapprox}}]
Let $N(n{\bf 1}^d)$ be the degree of the hyperdeterminant of format $(n+1)^{\times d}$. Then asymptotically, 
for $d\ge 3$ and $n\to +\infty$, 
\[
N(n{\bf 1}^d) \approx \frac{(d-1)^{2d-2}}{[2\pi(d-2)]^{\frac{d-1}{2}}\,d^{\frac{3d-6}{2}}}\cdot \frac{(d-1)^{dn}}{n^{\frac{d-3}{2}}}\,. 
\]
\end{theorem*}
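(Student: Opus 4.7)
The plan is to work from the Gelfand--Kapranov--Zelevinsky generating function for the degrees of hypercubical hyperdeterminants \cite[Ch.~14]{GKZ}, which has the rational form
\begin{equation*}
F(z_1,\ldots,z_d) \;=\; \sum_{k_1,\ldots,k_d\ge 0} N(k_1,\ldots,k_d)\, z_1^{k_1}\cdots z_d^{k_d} \;=\; \frac{1}{P(z_1,\ldots,z_d)^{2}},
\end{equation*}
where $P = 1 - \sum_{k=2}^{d}(k-1)\,e_k(z_1,\ldots,z_d)$ and $e_k$ is the $k$-th elementary symmetric polynomial. The desired number $N(n\mathbf{1}^d)$ is the diagonal coefficient $[z_1^n\cdots z_d^n]\,F$, which I would express by the multivariate Cauchy integral on a symmetric torus $|z_i|=\zeta$.

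A short manipulation of binomial sums yields the clean factorization
\begin{equation*}
P(z,\ldots,z) \;=\; (1+z)^{d-1}\bigl(1-(d-1)z\bigr),
\end{equation*}
exhibiting the unique positive root $\zeta^{*}=1/(d-1)$. By the $S_d$-symmetry of $F$, the saddle point of the Cauchy integrand must lie on the symmetric diagonal, so at $\mathbf{z}^{*}=(\zeta^{*},\ldots,\zeta^{*})$; already this identifies the main exponential as $(\mathbf{z}^*)^{-n\mathbf{1}}=(d-1)^{dn}$.

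Next I would invoke the multivariate saddle-point method (analytic combinatorics in several variables, in the spirit of Pemantle and Wilson) at the smooth point $\mathbf{z}^{*}$ of the singular hypersurface $\{P=0\}$. Because $F$ has a \emph{double} pole along that hypersurface, the residue calculus contributes an extra linear factor in $n$ (compare $[z^n](1-z)^{-2}=n+1$), while the Gaussian integration transverse to the diagonal in the remaining $d-1$ directions supplies $(2\pi n)^{-(d-1)/2}(\det H)^{-1/2}$, with $H$ the Hessian of the restricted phase at $\mathbf{z}^{*}$. The combined correction $n^{\,1-(d-1)/2}=n^{-(d-3)/2}$ matches the stated exponent.

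The principal obstacle is the explicit evaluation of the constant prefactor, i.e.\ the residue coefficient at $\mathbf{z}^{*}$ and the Hessian determinant $\det H$. Thanks to $S_d$-invariance the Hessian decomposes into a one-dimensional symmetric eigenspace and its $(d-1)$-dimensional orthogonal complement, so only two eigenvalues have to be computed; both can be read off from the first partial derivatives of $P$ on the diagonal at $\zeta^{*}$, and the factor $(1+\zeta^*)^{d-1}=(d/(d-1))^{d-1}$ extracted from the factorization above already accounts for much of the symmetric piece. The final step is to match the resulting symbolic product against the stated numerical factors $(d-1)^{2d-2}$, $[2\pi(d-2)]^{(d-1)/2}$ and $d^{(3d-6)/2}$; this is the delicate bookkeeping step, but brings in no new conceptual ingredient once the saddle-point geometry is in place.
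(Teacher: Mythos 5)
Your strategy coincides with the paper's: both start from the GKZ generating function $\left[\sum_{i=0}^d(1-i)e_i(\mathbf{x})\right]^{-2}$ and extract the diagonal coefficient by smooth-point asymptotics at the symmetric point $\mathbf{c}=(1/(d-1),\ldots,1/(d-1))$ on the double-pole hypersurface $\{H=0\}$ (the paper invokes Raichev--Wilson's theorem rather than running the Cauchy integral by hand, but it is the same machine). Your factorization $H(z,\ldots,z)=(1+z)^{d-1}\bigl(1-(d-1)z\bigr)$ is a clean way to locate $\mathbf{c}$, and the exponential rate $(d-1)^{dn}$ together with the order $n\cdot(2\pi n)^{-(d-1)/2}=n^{-(d-3)/2}$ come out correctly.

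There are, however, two genuine gaps. First, everything in the theorem beyond ``$C_d\,(d-1)^{dn}n^{-(d-3)/2}$ for some constant $C_d$'' is the explicit constant, and you defer exactly that computation as ``bookkeeping.'' In the paper this is the bulk of the proof: Lemma \ref{lem: partial H} establishes the closed form $-\partial_{i_1\cdots i_k}H(\mathbf{c})=k\left(\frac{d}{d-1}\right)^{d-k-1}$ for all mixed partials, from which one obtains $\det\tilde g''(0)=(d-2)^{d-1}/d^{d-2}$ (Proposition \ref{prop: pointisgood}) and the residue factor $L_0(\tilde u_0,\tilde g)=G(\mathbf{c})/(c_d\partial_dH(\mathbf{c}))^2=(d-1)^{2d-2}/d^{2d-4}$ (Proposition \ref{prop: L_0}); these two numbers are the theorem's constant. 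Your claim that the two Hessian eigenvalues ``can be read off from the first partial derivatives of $P$ on the diagonal'' is not right: the curvature of the restricted phase involves the second partials $\partial_{dd}H(\mathbf{c})=0$ and $\partial_{1d}H(\mathbf{c})$, which is precisely why Lemma \ref{lem: partial H} is needed. Second, the symmetry argument that the saddle ``must lie on the symmetric diagonal'' does not establish strict minimality: one must exclude other points of $\{H=0\}$ in the closed polydisc $|z_i|\le 1/(d-1)$, which would contribute competing contributions of the same exponential order. The paper disposes of this by citing Pantone's proofs of strict minimality and isolation for this same hypersurface; some such argument is required before the local saddle-point expansion gives the global asymptotics.
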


This should be compared to the behavior of the ED degree (with respect to the Frobenius inner product introduced in Definition \ref{def: Frobenius}) of the Segre variety $X = \mathbb P(\mathbb C^{n+1})^{\times d}$ for $d\ge 3$ and $n\to +\infty$, found by Pantone \cite{Pan}:
\[
\mathrm{EDdegree}_F(X) \approx \frac{(d-1)^{d-1}}{(2\pi)^{\frac{d-1}{2}}\,(d-2)^{\frac{3d-1}{2}}\,d^{\frac{d-2}{2}}}\cdot \frac{(d-1)^{dn}}{n^{\frac{d-1}{2}}}\,. 
\]

The result is shown using a theorem of Raichev-Wilson \cite{RW} and performing a similar analysis to the one of Pantone \cite{Pan}, for approximating the number of singular vector tuples introduced by Lim \cite{L} and Qi \cite{Q}.
Indeed, the source of this result is the surprising similarity between two infinite series a priori unrelated: the generating function of the degrees of hyperdeterminants \cite[Theorem 2.4, Chapter 14]{GKZ} and the generating function of Ekhad and Zeilberger \cite[Theorem 1.2]{EZ} resulting from the ED degree formula of Theorem \ref{thm: FO formula}.
Note that the hyperdeterminant grows faster, unless the trivial case $d=2$ of ordinary matrices, where both sequences collapse to $n+1$ (here the hyperdeterminant coincides with the determinant of a square matrix of order $n+1$, whereas $\mathrm{EDdegree}_F(X) = n+1$ by the Eckart-Young Theorem).
This is in contrast with the first nontrivial case of $2\times 2\times 2$ tensors, where the degree of the hyperderminant is $4$ and $\mathrm{EDdegree}_F(X) = 6$.

Another asymptotic result is proved for $n=1$, i.e., when $X$ is a Segre product of $d$ projective lines, the basic space of qubits in Quantum Information Theory.
In this case, we let $d\to +\infty$ (see Proposition \ref{prop: binaryasympt}).

The stabilization of the Frobenius ED degree of some Segre varieties is a very interesting phenomenon.
This is apparent from expanding Friedland-Ottaviani's formula, which expresses the number of singular vector tuples of a general tensor in a given format.
However, to the best of our knowledge, a geometrical explanation was missing.
We fill this gap providing a geometric view of this behavior that is tightly related to hyperdeterminants.

\begin{theorem*}[{\bf Theorem \ref{thm: specialization} and Corollary \ref{cor: stabED}}]
Let $\dim(V_i) = n_i + 1$ and $\dim(W) = m+1$, $N = \sum_{i=1}^{d}n_i$ and $m\ge N$.
Let $\mathrm{Det}$ be the hyperdeterminant in the boundary format $(n_1+1)\times \cdots \times(n_d+1)\times(N+1)$. Consider a tensor $t\in \PP(V_1\otimes\cdots\otimes V_{d}\otimes\C^{N+1})\subset\PP(V_1\otimes\cdots\otimes V_{d}\otimes W)$ with $\mathrm{Det}(t)\neq 0$. Then the critical points of $t$ in $\PP(V_1)\times\cdots\times\PP(V_{d})\times\PP(W)$ lie in the subvariety $\PP(V_1)\times\cdots\times\PP(V_{d})\times\PP(\C^{N+1})$.

\noindent Moreover, for all $m\geq N$, we have
\[
\mathrm{EDdegree}_F\left(\PP^{n_1}\times\cdots\times\PP^{n_{d}}\times\PP^{m}\right) = \mathrm{EDdegree}_F\left(\PP^{n_1}\times\cdots\times\PP^{n_{d}}\times\PP^{N}\right)\,.
\]
\end{theorem*}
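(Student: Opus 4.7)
My plan is to handle the two assertions in sequence, extracting the ED degree equality from the specialization statement via a fiber-count argument on the ED correspondence of $X_m:=\PP^{n_1}\times\cdots\times\PP^{n_d}\times\PP^m$.

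For the specialization assertion I would first record the Lagrange equation obtained by varying the last factor $w$: at a critical point $(v_1,\ldots,v_d,w)$ the standard singular-vector-tuple identity in the $W$-slot reads
\[
t(v_1,\ldots,v_d,\cdot)\;=\;\lambda\,w\quad\text{in } W=\C^{m+1},
\]
for some scalar $\lambda\in\C$, where $t(v_1,\ldots,v_d,\cdot)$ denotes the partial contraction of $t$ in the first $d$ slots. Since $t\in V_1\otimes\cdots\otimes V_d\otimes\C^{N+1}$, the left-hand side lies automatically in $\C^{N+1}\subset W$, and if $\lambda\neq 0$ then so does $w$ -- which is exactly the claim. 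To exclude $\lambda=0$ I would invoke the boundary-format description of the hyperdeterminant: $\lambda=0$ forces $t(v_1,\ldots,v_d,\cdot)=0$, i.e.\ the system of $N+1$ multilinear forms extracted from $t$ by contracting the first $d$ slots has a nonzero common solution $(v_1,\ldots,v_d)$. In the boundary format $(n_1+1)\times\cdots\times(n_d+1)\times(N+1)$ with $N=\sum n_i$, the hyperdeterminant $\mathrm{Det}(t)$ is, up to sign, the resultant of precisely this system \cite[Chapter~14]{GKZ}; hence the existence of such a common zero would contradict $\mathrm{Det}(t)\neq 0$, closing the first assertion.

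For the ED degree equality, set $X_N\subset X_m$ and $T_N=V_1\otimes\cdots\otimes V_d\otimes\C^{N+1}\subset T_m$. At any $u\in X_N$ the tangent directions of $X_m$ transverse to $X_N$ are spanned by $v_1\otimes\cdots\otimes v_d\otimes w'$ with $w'\in(\C^{N+1})^\perp$, and these are Frobenius-orthogonal to the whole of $T_N$. Since $t-u\in T_N$ for $t\in T_N$, the critical-point conditions on $X_m$ restricted to $X_N$ coincide with those on $X_N$, and a direct second-variation computation (the Segre parametrization in the last factor is linear, so its second derivative vanishes in the transverse directions) shows that critical points simple on $X_N$ stay simple on $X_m$. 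Combined with the specialization, for generic $t\in T_N$ with $\mathrm{Det}(t)\neq 0$ the fiber of the ED correspondence $\pi_m:\mathcal{E}_{X_m}\to T_m$ over $t$ is $0$-dimensional and consists of exactly $\mathrm{EDdegree}_F(X_N)$ reduced points. Properness of $\pi_m$ (the ED correspondence is closed in $T_m\times X_m$, projective in the second factor) together with miracle flatness on the finite-fiber locus identifies this fiber length with the generic degree $\mathrm{EDdegree}_F(X_m)$, yielding the claimed equality.

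\textbf{Main obstacle.} The delicate step is the counting in the last paragraph. The inequality $\mathrm{EDdegree}_F(X_N)\le\mathrm{EDdegree}_F(X_m)$ is automatic from upper semicontinuity of fiber cardinality, but the reverse inequality relies on the chosen $t\in T_N$ landing outside the ramification locus of $\pi_m$ and on $\mathcal{E}_{X_m}$ being Cohen--Macaulay of the expected dimension, so that no ``phantom'' critical points of $X_m$ inflate the count. The geometric payoff of the first assertion is exactly that the only mechanism by which such phantom points could survive at $t\in T_N$ is via $\lambda=0$, which is precisely forbidden by the nonvanishing boundary-format hyperdeterminant.
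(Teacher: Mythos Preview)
Your approach to the first assertion is essentially the paper's: both contract $t$ in the first $d$ slots and invoke the identification of the boundary-format hyperdeterminant with the resultant of the resulting $N+1$ multilinear forms \cite[Chapter~14, Theorem~3.1]{GKZ} to exclude the case $\lambda=0$. The paper phrases this via the critical-space equations~\eqref{eq: equations singular space} (isolating the factor $x^{(d+1)}_{N+2}$), you via the Lagrange identity $t(v_1,\dots,v_d,\cdot)=\lambda w$ in the $W$-slot, but the content is identical.

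For the ED-degree equality you follow the same specialization strategy as the paper but are more careful on the counting. The paper's proof of Corollary~\ref{cor: stabED} is a single sentence: it asserts that for a smooth projective variety, any data point $t$ with finitely many critical points has exactly $\mathrm{EDdegree}$ of them, and then specializes $t$ into $T_N$. Read as a cardinality statement that principle need not hold (critical points may collide on the ED discriminant); what is true is that the scheme-theoretic fiber length of $\pi_m$ equals $\mathrm{EDdegree}_F(X_m)$ on the finite-fiber locus, by miracle flatness. Your Hessian computation in the directions transverse to $X_N$---which, because the Segre map is linear in $w$ and $t-p\in T_N$ is Frobenius-orthogonal to $V\otimes(\C^{N+1})^\perp$, reduces to the block $2\prod_i q_i(v_i,v_i)\cdot q_W|_{(\C^{N+1})^\perp}$, nondegenerate since the generic critical points on $X_N$ are non-isotropic---is precisely the reducedness check needed to turn fiber length into fiber cardinality. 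So your route coincides with the paper's in outline but supplies a detail that the paper leaves implicit.
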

Furthermore, Theorem \ref{thm: descriptionofcriticalpoints} gives a rather detailed description of the critical points of a tensor $t$ in the hypothesis of the previous result: they exhibit an interesting behavior as they distribute either on the subspace where the tensor $t$ lives or on its orthogonal.

Our last contribution deals with the stabilization property of the degree of the dual varieties of some particular Segre products of projective varieties.

\begin{theorem*}[{\bf Corollary \ref{cor: stabilization XxQ}}]
Let $X$ be a projective variety of dimension $m$. For $n\ge 0$, let $Q_n\subset\PP^{n+1}$ be a smooth quadric hypersurface. 
Suppose that $(X\times Q_m)^\vee$ is a hypersurface. Then $(X\times Q_n)^\vee$ is a hypersurface of the same degree as $(X\times Q_m)^\vee$ for all $n\ge m$.
\end{theorem*}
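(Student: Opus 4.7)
The plan is to realize $(X\times Q_m)^\vee$ as a transverse linear section of $(X\times Q_n)^\vee$, from which the equality of degrees will follow by the standard Bezout computation. Set $V=\C^{m+1}$ with $X\subset\PP(V)$, let $W_n=\C^{n+2}$ carry the invertible symmetric form $M_n\colon W_n\to W_n^*$ defining $Q_n$, and choose a generic $(m+2)$-dimensional subspace $W_m\subset W_n$ on which $M_n$ restricts nondegenerately to $M_m$. Let $W_m^c$ be the $M_n$-orthogonal complement of $W_m$, so that $W_n=W_m\oplus W_m^c$ and $Q_m:=Q_n\cap\PP(W_m)$ is a smooth quadric in $\PP(W_m)$. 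Extending linear forms by zero on $V\otimes W_m^c$ gives a linear embedding $\iota\colon\PP(V^*\otimes W_m^*)\hookrightarrow\PP(V^*\otimes W_n^*)$ of codimension $(n-m)(m+1)$.

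The key step is the set-theoretic identity
\[
(X\times Q_n)^\vee\cap \iota\bigl(\PP(V^*\otimes W_m^*)\bigr)=\iota\bigl((X\times Q_m)^\vee\bigr).
\]
The standard tangent-cone computation for a Segre product shows that a linear form $L\in V^*\otimes W_n^*$ is tangent to $X\times Q_n$ at a smooth point $(x,y)$ iff $L(\cdot,y)\in N^*_{X,x}\subset V^*$ and $L(x,\cdot)=\lambda\, M_n y\in W_n^*$ for some $\lambda\in\C$. If $L$ lies in $V^*\otimes W_m^*\subset V^*\otimes W_n^*$ (via the $M_n$-orthogonal decomposition), then $L(x,\cdot)$ sits in $W_m^*$; provided $\lambda\ne 0$, this forces $M_n y\in W_m^*$, hence $y\in W_m$, hence $y\in Q_m$. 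Since $M_n y=M_m y$ for $y\in W_m$, both tangency conditions then coincide term by term with the tangency of $L\in V^*\otimes W_m^*$ to $X\times Q_m$ at $(x,y)$, and the reverse implication is immediate. The exceptional locus $\lambda=0$, where $L(x,\cdot)=0$, is cut out by an extra linear equation and so is a proper sublocus that does not affect the identity on any top-dimensional component.

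Granted the identity, for a generic choice of $W_m$ the linear subspace $\iota(\PP(V^*\otimes W_m^*))$ meets $(X\times Q_n)^\vee$ transversely; hence the intersection is a codimension-one subvariety of $\iota(\PP(V^*\otimes W_m^*))$ whose degree equals $\deg(X\times Q_n)^\vee$, obtained by restriction of the defining polynomial of the hypersurface. Since this intersection coincides with $(X\times Q_m)^\vee$, whose degree is finite by hypothesis, we conclude $\deg(X\times Q_n)^\vee=\deg(X\times Q_m)^\vee$, and in particular $(X\times Q_n)^\vee$ is itself a hypersurface for all $n\ge m$. The main obstacle will be to verify the genericity assumptions on $W_m$ simultaneously: $M_n|_{W_m}$ must be nondegenerate, $\iota(\PP(V^*\otimes W_m^*))$ must be transverse to $(X\times Q_n)^\vee$, and the exceptional $\lambda=0$ sublocus should not absorb a full component of the intersection. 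Each of these is an open condition on the Grassmannian of $(m+2)$-dimensional subspaces of $W_n$, so a sufficiently general $W_m$ satisfies all three.
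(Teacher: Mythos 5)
Your route is genuinely different from the paper's, which never intersects the dual variety with linear subspaces: it computes the polar class $\delta_0(X\times Y_n)$ in terms of the Chern--Mather classes of $X$ (Lemma \ref{lem: expressionalphai}), proves $\delta_0(X\times Y_n)=(d-1)^{n-m}\delta_0(X\times Y_m)$ by Zeilberger-certified binomial identities (Theorem \ref{thm: stabilization XxQ}), and gets the hypersurface condition from Weyman--Zelevinsky. Your tangency computation is correct where it applies: extending $L\in V^*\otimes W_m^*$ by zero preserves tangency because $T_yQ_n=T_yQ_m\oplus W_m^c$ for $y\in W_m$, and conversely $\lambda\neq 0$ forces $y\in(W_m^c)^{\perp_{M_n}}=W_m$. (Minor slip: $V=\C^{m+1}$ would force $X=\PP^m$; the ambient space of $X$ must be arbitrary. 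Also note the paper's theorem covers degree-$d$ hypersurfaces $Y_n$, while your orthogonal-decomposition argument is specific to quadrics.)

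There are, however, two genuine gaps. First, the degree argument presupposes that $(X\times Q_n)^\vee$ is a hypersurface, which is part of the conclusion; your closing ``in particular'' is circular. The set-theoretic identity alone only bounds $\mathrm{codim}\,(X\times Q_n)^\vee$ from below, and since $\Lambda=\iota(\PP(V^*\otimes W_m^*))$ is not a generic linear subspace you cannot read off the codimension of $(X\times Q_n)^\vee$ from that of the intersection. (This is fixable by citing Theorem \ref{thm: WZ94}.) Second, and more seriously, the multiplicity-one claim is unsupported. The restriction of the defining polynomial $F$ of $(X\times Q_n)^\vee$ to $\Lambda$ is a divisor of degree $\deg F$ supported on the irreducible hypersurface $(X\times Q_m)^\vee$, so a priori $\deg(X\times Q_n)^\vee=k\cdot\deg(X\times Q_m)^\vee$ with $k\ge 1$. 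Transversality to $(X\times Q_n)^\vee$ is an open condition on the \emph{full} Grassmannian of linear subspaces of $\PP(V^*\otimes W_n^*)$, but your $\Lambda$ ranges only over the special subfamily $\{V^*\otimes W_m^*\}$; a dense open condition need not be satisfied by any member of such a subfamily, so ``generic $W_m$'' does not deliver transversality. Closing this requires an actual argument, e.g.\ via reflexivity: at a generic tangent hyperplane $L_0$ with contact point $x_0\otimes y_0$ one has $T_{L_0}(X\times Q_n)^\vee=\{L:L(x_0\otimes y_0)=0\}$, which does not contain $V^*\otimes W_m^*$ because $y_0\in W_m$ --- plus a verification that the generic point of $(X\times Q_m)^\vee$ is such a smooth generic point of $(X\times Q_n)^\vee$. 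Relatedly, the forward inclusion in your set-theoretic identity only treats hyperplanes honestly tangent at a smooth point (not limit points of the closure that happen to lie in $\Lambda$), and the $\lambda=0$ locus is dismissed without the codimension count (it has codimension $\ge 2$ in $\Lambda$ only after one knows the intersection is pure of codimension one) that would rule out extra components.
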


At first glance, the two stabilization phenomena described in Corollaries \ref{cor: stabED} and \ref{cor: stabilization XxQ} seem to be independent. Surprisingly, a relationship appears when $X$ is a Segre product $\PP^{k_1}\times\cdots\times\PP^{k_\ell}$.
In this case the variety $X\times Q_n$ is the largest irreducible component of isotropic elements in $Y=X\times \PP^{n+1}$
and it is, for large $n$, the only irreducible component such that its dual is a hypersurface. If we could apply \cite[Theorem 6.3]{OS} which states $2\mathrm{EDdegree}(Y)=\deg(Y\cap Q)^\vee$, then the equivalence between Corollaries \ref{cor: stabED} and \ref{cor: stabilization XxQ} would follow. Although \cite[Theorem 6.3]{OS} cannot be directly applied in this case, this was a guide for our study. Indeed, from the study of the {\it ED polynomial} of the Segre variety $X\times\PP^{n+1}$ (with respect to the Frobenius inner product), we observed in various examples that $\mathrm{EDdegree}_F(X\times\PP^{n+1})$ stabilizes for $n\to+\infty$ if $\deg[(X\times Q_n)^\vee]$ stabilizes too, and vice-versa (see \S\ref{sec: edpolysegre} for some related open problems). In a second step, we generalized Corollary \ref{cor: stabilization XxQ} to an arbitrary projective variety $X$ using an explicit description of the {\it polar classes} \cite[\S 3]{Hol} of $X\times Q_n$, showing several binomial identities involving them.

The paper is organized as follows. In \S\ref{sec: prelim}, we fix notation and introduce the terminology used throughout the article. 
In \S\ref{sec: hyper}, we derive the asymptotic formula for the degree of the (hypercubical) hyperdeterminants.  As an additional result, in \S\ref{sec: anotherhyperdet} we give an asymptotic formula for the degree of the hyperdeterminant
of format $2^{\times d}$ as $d$ goes to infinity. 
In \S\ref{sec: stabilization}, we first provide a description of critical points for tensors of boundary format as in Theorem \ref{thm: descriptionofcriticalpoints}. Thereafter we show Theorem \ref{thm: specialization} and Corollary \ref{cor: stabED}, along with recording other observations. In \S\ref{sec: stabilization degree dual of XxQ}, we recall polar classes and utilize them to establish the stabilization of the degree featured in Corollary \ref{cor: stabilization XxQ}. In \S\ref{sec: edpolysegre}, we mention some by-products of the stabilization above and formulate some intriguing conjectures, naturally emerging from our experiments.

\section{Preliminaries}\label{sec: prelim}

Throughout the paper, we let $V_1,\ldots,V_d$ be complex vector spaces of dimensions $\dim(V_i)=n_i+1$, respectively.
A (complex) tensor of {\it format} $(n_1+1)\times \cdots\times(n_d+1)$ is a multilinear map $t\colon V_1^*\times \cdots \times V_d^*\rightarrow \C$, i.e., an element of the tensor product (over $\mathbb C$) $V\coloneqq V_1\otimes \cdots \otimes V_d$. 

\begin{definition}\label{def: Segre embedding}
A tensor $t\in V$ is of {\it rank one} (or {\it decomposable}) if $t = v_1\otimes\cdots\otimes v_d$ for some vectors $v_j\in V_j$ for all $j\in[d]\coloneqq\{1,\ldots,d\}$. Tensors of rank at most one in $V$ form the affine cone over the {\it Segre variety of format $(n_1+1)\times \cdots \times (n_d+1)$}, that is the image of the projective morphism
\[
\mathrm{Seg}\colon \PP(V_1)\times \cdots \times \PP(V_d)\to\PP(V)
\]
defined by $\mathrm{Seg}([v_1],\ldots,[v_d])\coloneqq[v_1\otimes\cdots\otimes v_d]$ for all non-zero $v_j\in V_j$. The Segre variety introduced above is often denoted simply by $\PP(V_1)\times \cdots \times \PP(V_d)$.
\end{definition}

For the ease of notation, in the rest we will abuse notation identifying a tensor $t\in V$ with its class in projective space.

\begin{definition}[{\bf Dual varieties}]\label{def: dual varieties}
Let $X\subset \PP(W)$ be a projective variety, where $\dim(W)=n+1$.
Its {\it dual variety} $X^{\vee}\subset\PP(W^*)$ is the closure of all hyperplanes tangent to $X$ at some smooth point \cite[Chapter 1]{GKZ}.
The {\it dual defect} of $X$ is the natural number $\delta_X \coloneqq n-1-\dim(X^{\vee})$. A variety $X$ is said to be {\it dual defective} if $\delta_X>0$. Otherwise, it is {\it dual non-defective}. When $X = \PP(W)$, taken with its tautological embedding into itself, $X^{\vee} = \emptyset$ and $\mathrm{codim}(X^{\vee}) = n+1$. 
\end{definition}

Of particular interest are dual varieties of Segre varieties, whose non-defectiveness is characterized by the following result.

\begin{theorem}{\cite[Chapter 14, Theorem 1.3]{GKZ}}
Let $X = \PP(V_1)\times \cdots \times \PP(V_d)$ be the Segre variety of format $(n_1+1)\times \cdots \times (n_d+1)$, and let $n_j = \max\lbrace n_i\mid i\in[d]\rbrace$. Then $X$ is dual non-defective if and only if
\[
n_j\leq \sum_{i\neq j} n_i\quad\forall\,j\in[d]\,.
\]
\end{theorem}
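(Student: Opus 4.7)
The plan is to pass to the conormal variety
\[
I_X = \{(x, [t]) \in X \times \PP(V^*) : t|_{\widehat{T_xX}} \equiv 0\},
\]
which always has dimension $\prod_i(n_i+1) - 2 = \dim\PP(V^*) - 1$ and whose image under the second projection $\pi_2$ is $X^\vee$. Thus $X^\vee$ is a hypersurface iff $\pi_2$ is generically finite, equivalently iff the generic contact locus in $X$ has dimension $0$. A tensor $t \in V^*$ is tangent at $x = v_1 \otimes \cdots \otimes v_d$ exactly when the $d$ contractions $\ell_i(t) \in V_i^*$, $\ell_i(t)(u_i) := t(v_1, \ldots, u_i, \ldots, v_d)$, vanish identically. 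Permuting factors, assume $n_1 = \max_i n_i$; since $n_j \leq n_1 \leq \sum_{i \neq j} n_i$ is automatic for $j \neq 1$, the equivalence reduces to showing that $X^\vee$ is a hypersurface iff $n_1 \leq \sum_{i \geq 2} n_i$.

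\emph{Necessity (contrapositive).} Assume $n_1 > \sum_{i \geq 2} n_i$. Take any $(x_0 = v_1^0 \otimes \cdots \otimes v_d^0, [t]) \in I_X$, freeze $v_i = v_i^0$ for $i \geq 2$, and view the conditions $\ell_i(t) = 0$ for $i \geq 2$ as a linear system on $v_1 \in V_1$ consisting of $\sum_{i \geq 2}(n_i + 1)$ equations. Exactly $d - 1$ of these are automatic: $\ell_i(t)(v_i^0) = t(v_1, v_2^0, \ldots, v_d^0) = \ell_1(t)(v_1)$ vanishes identically in $v_1$ since $\ell_1(t) = 0$. At most $\sum_{i \geq 2} n_i$ independent constraints remain, so the solution space in $V_1$ has dimension $\geq n_1 + 1 - \sum_{i \geq 2} n_i \geq 2$. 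Projectively this is a line of rank-one tangent points all sharing $[t]$, so every contact locus has positive dimension, $\pi_2$ is not generically finite, and $X^\vee$ is not a hypersurface.

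\emph{Sufficiency.} Assume $n_1 \leq \sum_{i \geq 2} n_i$. By upper semicontinuity of fiber dimensions of $\pi_2$, it suffices to exhibit one $[t_0] \in X^\vee$ with finite contact locus. This locus is the common zero set in $\prod_j \PP(V_j)$ of the $d$ conditions $v_1 \otimes \cdots \otimes \widehat{v_i} \otimes \cdots \otimes v_d \in \ker \psi_i(t_0)$, where $\psi_i \colon \bigotimes_{j \neq i} V_j \to V_i^*$ is the $i$-th flattening of $t_0$. I would construct $t_0$ as a generic tangent hyperplane at a fixed rank-one base point, so that each $\ker \psi_i(t_0)$ has the expected codimension $n_i$, and then verify that its intersection with the multi-Segre is zero-dimensional. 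The main obstacle is precisely this transversality: the numerics are tight, and I would handle it either by induction on $d$ (peeling off one factor and invoking the two-factor case, where the dual of the Segre of square matrices is the determinantal hypersurface), or by an explicit block-structured construction of $t_0$ whose flattenings have the expected rank.
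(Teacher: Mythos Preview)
The paper does not prove this statement at all: it is quoted verbatim from \cite[Chapter 14, Theorem 1.3]{GKZ} and used as a black box. So there is no ``paper's own proof'' to compare against, and your attempt should be judged on its own.

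Your necessity argument is correct and clean. Fixing $v_2^0,\ldots,v_d^0$ and noting that the condition $\ell_1(t)=0$ is independent of $v_1$, while each block $\ell_i(t)=0$ for $i\ge 2$ contributes at most $n_i$ genuinely new linear conditions on $v_1$ (the one indexed by $u_i=v_i^0$ being redundant), is exactly the right count, and it forces every fibre of $\pi_2$ to have positive dimension when $n_1>\sum_{i\ge 2}n_i$.

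The sufficiency direction, however, is not a proof but a wish list. You correctly identify that by semicontinuity it is enough to produce a single $t_0$ with zero-dimensional contact locus, but neither of your two proposed routes is fleshed out. The induction on $d$ is not set up: peeling off one factor does not obviously preserve the inequality (e.g.\ $(2,2,3)$ satisfies the condition but removing the last factor gives $(2,2)$ which also does, while $(1,1,2)$ satisfies it but removing a $1$ gives $(1,2)$ which does not), so you would need to say which factor is removed and how the tangent hyperplane restricts. The ``explicit block-structured construction of $t_0$'' is precisely the content of the theorem and you have not supplied it. In GKZ the sufficiency is the substantial part, handled via the Cayley--Koszul machinery developed earlier in that chapter; a direct elementary construction is possible but requires real work (for instance, building $t_0$ from a generic linear section and checking the flattenings have full rank). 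As written, your sufficiency argument has a genuine gap at exactly the point you flag as ``the main obstacle.''
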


\begin{definition}[{\bf Hyperdeterminants}]\label{def: hyperdeterminants}
Let $X = \PP(V_1)\times \cdots \times \PP(V_d)$ be the Segre variety of format $(n_1+1)\times \cdots \times(n_d+1)$. When $X$ is dual non-defective, the polynomial equation defining the hypersurface $X^\vee\subset\PP(V^*)$ (up to scalar multiples) is called the {\it hyperdeterminant} of format $(n_1+1)\times \cdots \times (n_d+1)$ and is denoted by $\mathrm{Det}$. When the format $(n_1+1)\times\cdots\times (n_d+1)$ is such that $n_j= \sum_{i\neq j} n_i$ for $n_j = \max \lbrace n_i\mid i\in[d]\rbrace$, the hyperdeterminant is said to be of {\it boundary format}. The hyperdeterminant of format $(n+1)^{\times d}$ is said to be {\it hypercubical}. 
\end{definition}

\section{Asymptotics of degrees of some hyperdeterminants}\label{sec: hyper}

In this section, we establish the asymptotic results for hyperdeterminants of formats $(n+1)^d$ (as $n\rightarrow +\infty$) and
formats $2^d$ (as $d\rightarrow +\infty$).

\subsection{Asymptotics for the hyperdeterminant of \texorpdfstring{$(\mathbb P^n)^{\times d}$}{Pn d}}

Let ${\bf x}=(x_1,\ldots, x_d)$ be a coordinate system in $\C^d$. Let $\alpha=(\alpha_1,\ldots, \alpha_d)\in \mathbb N^d$ be a $d$-tuple of natural numbers. Define ${\bf x}^{\alpha} \coloneqq x_1^{\alpha_1}\cdots x_d^{\alpha_d}$. 

Raichev and Wilson \cite{RW} gave a method to find the asymptotic behavior of the coefficients of a multivariate power series $\sum_{\alpha\in \mathbb N^d} f_{\alpha} {\bf x}^{\alpha}$ in the variables $x_1,\ldots,x_d$, which is the Taylor expansion of a function $F = G/H^{p}$, where $G$ and $H$ are holomorphic functions in a neighborhood of the origin of $\C^d$. They showed that the asymptotics of $f_{n\alpha}$, for $\alpha\in\mathbb N^d$, is governed by special smooth points of the complex (analytic) variety $\mathcal{V}\coloneqq\left\lbrace H ({\bf x}) = 0\right\rbrace\subset \C^d$. 

\begin{definition}[{\bf Strictly minimal point}]\label{def: strictly minimal point}
Let $\mathcal V\subset\C^n$ be a complex variety.
For a point ${\bf c}\in \mathbb C^d$, its {\it polydisc} is $\mathbb D({\bf c}) \coloneqq \lbrace {\bf y}\in \C^d \mbox{ such that } |y_i| \leq |c_i| \mbox{ for all } i \rbrace$.
A point ${\bf c}\in \mathcal V$ is {\it strictly minimal} if it is the only point in $\mathcal V \cap \mathbb D({\bf c})$. 
\end{definition}

\begin{definition}[{\bf Critical point}]\label{def: critical point}
Let $\mathcal V\subset\C^n$ be the variety defined by $H({\bf x}) = 0$.
A smooth point ${\bf c}=(c_1,\ldots, c_d)\in \mathcal V$ is {\it critical} if it is smooth and $c_1\partial_1H({\bf c}) = c_2\partial_2H({\bf c}) = \cdots = c_d\partial_d H({\bf c})$.
It is {\it isolated} if there is a neighborhood of ${\bf c}\in \mathcal V$ where it is the only critical point.
\end{definition}

\begin{theorem}[{\bf \cite[Theorem 3.2]{RW}}]\label{thm: asympt}
Let $d\geq 2$ and let $F = G/H^2$, whose Taylor expansion in a neighborhood of the
origin is $\sum_{\alpha\in \mathbb N^d} f_{\alpha} {\bf x}^{\alpha}$. Suppose ${\bf c}\in \mathcal V = \lbrace H({\bf x}) = 0 \rbrace$ is a smooth with $c_d\partial_d H({\bf c}) \neq 0$, strictly minimal, critical, isolated, and non-degenerate point. Then, for all $N\in\mathbb N$, as $n\rightarrow+\infty$, 
\[
f_{n {\bf 1}} \approx {\bf c}^{-n{\bf 1}} \left[ \left((2\pi n)^{d-1} \det \tilde{g}''(0) \right)^{-1/2}\sum_{j=0}^{1}\sum_{k<N} \frac{(n+1)^{2-1-j}}{(2-1-j)!\,j!}n^{-k} L_k(\tilde{u}_j, \tilde{g}) + O\left(n^{1-(d-1)/2-N}\right)\right]\,.
\]
\noindent In the original formula of Raichev-Wilson, we substituted $\alpha = (1,\ldots,1) = {\bf 1}$ and $p=2$. 
\end{theorem}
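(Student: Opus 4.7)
The plan is to follow the standard multivariate analytic combinatorics strategy of Pemantle--Wilson extended by Raichev--Wilson. The starting point is the multivariate Cauchy integral formula: for $\mathbf r$ small enough that $F$ is holomorphic on the polytorus $|x_i|=r_i$,
\[
f_{n\mathbf 1} = \frac{1}{(2\pi i)^d}\oint \frac{G(\mathbf x)}{H(\mathbf x)^2}\,\frac{d x_1\cdots d x_d}{x_1^{n+1}\cdots x_d^{n+1}}.
\]
The aim is to deform the polytorus outward until it is pinned against the pole variety $\mathcal V=\{H=0\}$ at the strictly minimal point $\mathbf c$. Strict minimality of $\mathbf c$ guarantees that $\mathbf c$ is the only obstruction to this deformation inside $\mathbb D(\mathbf c)$, so the contour can be split into a piece exponentially smaller than $\mathbf c^{-n\mathbf 1}$ plus a local piece concentrated near $\mathbf c$.

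Next I would exploit smoothness of $\mathcal V$ at $\mathbf c$ together with $c_d\partial_dH(\mathbf c)\neq 0$ to apply the implicit function theorem and write $x_d=\psi(x_1,\ldots,x_{d-1})$ locally, then perform the $x_d$-integration first. Because $H$ appears with order $p=2$, the resulting residue picks up both the value and the first $x_d$-derivative of the remaining integrand, producing the two amplitude factors $\tilde u_0,\tilde u_1$ in the statement (the combinatorial weights $1/[(2-1-j)!\,j!]$ come from the Leibniz rule on $(x-\psi)^{-2}$). After a logarithmic change of variables $x_i=c_i e^{y_i/\sqrt{n}}$, the remaining $(d-1)$-dimensional integral has the form $\int e^{-n\tilde g(\mathbf y)}\tilde u_j(\mathbf y)\,d\mathbf y$, modulo the overall factor $\mathbf c^{-n\mathbf 1}$ arising from the evaluation of $\mathbf x^{-n\mathbf 1}$ at $\mathbf c$.

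The critical point condition $c_1\partial_1H(\mathbf c)=\cdots=c_d\partial_dH(\mathbf c)$ is exactly the assertion that $\mathbf c$ is a stationary point of $\tilde g$ along the parametrizing slice, i.e.\ a saddle for the exponent along the direction dictated by the diagonal multi-index $\mathbf 1$. Granted non-degeneracy (invertibility of $\tilde g''(0)$) and isolation of $\mathbf c$, one applies the multivariate Laplace / stationary phase expansion: the leading order produces the Gaussian factor $\bigl((2\pi n)^{d-1}\det\tilde g''(0)\bigr)^{-1/2}$, and the higher-order corrections are encoded by the standard differential operators $L_k(\tilde u_j,\tilde g)$ acting on the amplitudes, truncated at order $N$.

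The main obstacle is the bookkeeping forced by the double pole $p=2$: a simple pole would yield a single clean residue, whereas here one must track an extra $\partial_{x_d}$-derivative of the amplitude at the pole, which is what couples the index $j$ to both $\tilde u_0$ and $\tilde u_1$ and to the polynomial factor $(n+1)^{2-1-j}$. Controlling the error uniformly also requires a careful Morse normalization near $\mathbf c$ together with a global deformation argument, so that the contribution away from $\mathbf c$ is absorbed into $O\bigl(n^{1-(d-1)/2-N}\bigr)$ via the exponential decay of $e^{-n\tilde g(\mathbf y)}$ away from the saddle.
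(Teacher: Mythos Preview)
This theorem is not proved in the paper; it is quoted verbatim (with the substitutions $\alpha=\mathbf 1$, $p=2$) from Raichev--Wilson \cite[Theorem 3.2]{RW} and used as a black box. So there is no ``paper's own proof'' to compare against.

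That said, your sketch is a faithful outline of the Raichev--Wilson argument itself: Cauchy integral on a small polytorus, deformation pinned at the strictly minimal point $\mathbf c$, implicit-function parametrization of $\mathcal V$ near $\mathbf c$ using $c_d\partial_dH(\mathbf c)\neq 0$, residue in $x_d$ at the order-$2$ pole (which explains the sum over $j\in\{0,1\}$ and the factors $(n+1)^{1-j}/[(1-j)!\,j!]$), followed by a $(d-1)$-dimensional stationary-phase expansion yielding the Gaussian prefactor $\bigl((2\pi n)^{d-1}\det\tilde g''(0)\bigr)^{-1/2}$ and the $L_k$ corrections. The identification of the critical-point condition with stationarity of the phase $\tilde g$ along the diagonal direction is exactly right. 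As a proof \emph{sketch} nothing is wrong; as a proof it is of course incomplete in the places you flag yourself (uniform error control away from the saddle, the Morse normalization), but those are precisely the technical points that \cite{RW} supplies.
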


\begin{theorem}[{\bf \cite[Theorem 2.4, Chapter 14]{GKZ}}]\label{thm: hyperdet}
The generating function of the degrees $N_d(k_1,\ldots, k_d)$ of the hyperdeterminants of format $\prod_{i=1}^d (k_i +1)$ is given by
\[
\sum_{k\in \mathbb N^d} N_d(k_1,\ldots, k_d) {\bf x}^{k} = \left[\sum_{i=0}^d (1-i) e_i({\bf x})\right]^{-2}\,,
\]
where $k=(k_1,\ldots,k_d)$ and $e_i({\bf x})$ is the $i$-th elementary function in the variables $x_1,\ldots,x_d$. 
\end{theorem}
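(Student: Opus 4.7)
The plan is to interpret $N_d(k_1,\ldots,k_d)$ as $\deg(X^\vee)$ where $X = \PP^{k_1} \times \cdots \times \PP^{k_d}$ is the Segre variety, compute this degree by an intersection-theoretic integral on $X$, and then bundle the answers over all $\mathbf{k} \in \mathbb N^d$ into a closed-form generating function.

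First, I would invoke a Katz--Kleiman style formula expressing the degree of a dual hypersurface of a smooth, dual non-defective variety in terms of Chern classes of $T_X$ and the hyperplane class $h = c_1(\sO_X(1))$. Since the Segre tangent bundle splits as $T_X = \bigoplus_j \pi_j^* T_{\PP^{k_j}}$, its total Chern class factors neatly as $c(T_X) = \prod_j (1+h_j)^{k_j+1}$ in the truncated ring $\Z[h_1,\ldots,h_d]/(h_1^{k_1+1},\ldots,h_d^{k_d+1})$, where $h_j$ is the pullback of the hyperplane class of $\PP^{k_j}$ and $h = h_1+\cdots+h_d$. The degree integral $\int_X$ amounts to extracting the coefficient of the top monomial $h_1^{k_1}\cdots h_d^{k_d}$.

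Second, I would form the generating series $\sum_{\mathbf{k}} N_d(\mathbf{k})\,\mathbf{x}^{\mathbf{k}}$ and interchange summation with coefficient extraction. A natural device is to introduce the kernel $\prod_j (1 - x_j h_j)^{-1}$: the desired coefficient is then the constant term (with respect to the $h_j$) of the rational function obtained by multiplying the Chern-class expression by this kernel. Carrying out the residue-type computation in each $h_j$ reduces the answer to a rational function in $\mathbf{x}$ alone. The final, and hardest, step is to identify this rational function with $\bigl[\sum_{i=0}^d (1-i)\, e_i(\mathbf{x})\bigr]^{-2}$; here a key preliminary identity is
$$\sum_{i=0}^d (1-i)\, e_i(\mathbf{x}) \;=\; \prod_{j=1}^d (1+x_j)\,\Biggl[\,1 - \sum_{j=1}^d \frac{x_j}{1+x_j}\Biggr],$$
which follows from $\sum_i e_i(\mathbf{x}) = \prod_j(1+x_j)$ combined with Euler's degree identity $\sum_i i\, e_i(\mathbf{x}) = \sum_j x_j \prod_{i\neq j}(1+x_i)$.

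The main obstacle will be explaining the exponent $-2$: the denominator of the generating function has the exact shape expected from the Euler sequence on each factor, but the squaring is subtler. I would try to trace it to the principal-parts-bundle formulation of Katz's formula $\deg(X^\vee) = \int_X c_{\dim X}(P^1_X(\sO_X(1)))$, where a tautological Jacobian factor is counted twice. An alternative route, and likely the one used in \cite{GKZ}, is to realise the hyperdeterminant as a specific sparse resultant on the Cayley configuration of the product of simplices; the degree is then computed by a mixed-volume / principal-$A$-determinant formula, and the squared factor emerges directly from the Newton-polytope structure of the discriminantal fibration.
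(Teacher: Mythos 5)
The paper does not prove this statement at all: it is quoted verbatim from \cite[Chapter 14, Theorem 2.4]{GKZ} as an external input, so there is no internal proof to compare against. Judged on its own merits, your outline is essentially the standard derivation and it does work. Two remarks. First, the exponent $-2$ that you flag as the ``main obstacle'' is not subtle and does not require the principal-parts or sparse-resultant detour: the class formula the paper itself records as equation \eqref{eq: degdual}, namely $\delta_0(X)=\sum_{j}(-1)^j(\dim X+1-j)\deg(c_j(X))$, is exactly the statement $\deg(X^\vee)=\int_X c(T_X^\vee)/(1-h)^2$, and the square in the denominator of the generating function is the direct image of this $(1-h)^{-2}$ under the residue computation. (This formula also returns $\delta_0=0$ in the dual-defective formats, matching GKZ's convention $N_d(\mathbf{k})=0$ there, e.g.\ $N_2(k_1,k_2)=0$ for $k_1\neq k_2$.) Second, your kernel $\prod_j(1-x_jh_j)^{-1}$ is not quite the right device, because the integrand $\prod_j(1-h_j)^{k_j+1}\cdot(1-\sum_j h_j)^{-2}$ itself depends on $\mathbf{k}$ through the Chern factors; the correct move is to sum the geometric series $\sum_{k_j}\bigl(x_j(1-h_j)/h_j\bigr)^{k_j}$ inside each contour integral, which places the relevant pole at $h_j=x_j/(1+x_j)$ and yields, per variable, a factor $(1+x_j)^{-2}$ together with the substitution $h_j\mapsto x_j/(1+x_j)$ in $(1-\sum_j h_j)^{-2}$. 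Combining this with your (correct) identity $\sum_{i=0}^d(1-i)e_i(\mathbf{x})=\prod_j(1+x_j)\bigl[1-\sum_j x_j/(1+x_j)\bigr]$ gives precisely $\bigl[\sum_{i=0}^d(1-i)e_i(\mathbf{x})\bigr]^{-2}$, so the argument closes with no gap.
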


\noindent Henceforth, we let $H({\bf x})\coloneqq\sum_{i=0}^d (1-i) e_i({\bf x})$ and $\mathcal V \coloneqq \left\lbrace H({\bf x}) = 0\right\rbrace$. 

\begin{lemma}\label{lem: partial H}
For all $i\in[d]$ we have $\partial_{ii}H({\bf x})=0$. If additionally we consider the point ${\bf c}=(\frac{1}{d-1}, \frac{1}{d-1}, \ldots, \frac{1}{d-1})$, then
\[
-\partial_{i_1\cdots i_k}H({\bf c}) = k\left(\frac{d}{d-1}\right)^{d-k-1}\quad \forall\ 1\le i_1<\cdots< i_k\le d\,.
\]
\begin{proof}
The first part follows immediately from the definition of $H$. For the second part, we have 
\[
-\partial_{i_1\cdots i_k}H({\bf c}) = \sum_{j=k}^d (j-1) e_{j-k}(\widehat{{\bf c}_{i_1\cdots i_k}}) = \sum_{i=0}^{d-k}(i+k-1)\binom{d-k}{i}\left(\frac{1}{d-1}\right)^i\,,
\]
where $\widehat{{\bf c}_{i_1\cdots i_k}}$ denotes all the variables $c_j$ except $c_{i_1},\ldots c_{i_k}$, and $e_{j-k}$ is the $(j-k)$-th elementary symmetric function in $d-k$ variables. Consider the $(d-1)\times (d-1)$ matrix $M=(M_{ij})_{i,j}$, where
\[
M_{ij} = \binom{d-k}{j-k+2}\left(\frac{1}{d-1}\right)^{j-k+2}\quad\forall\,0\le i,j\le d-2\,,
\]
where we use the convention $\binom{a}{b}=0$ if $b<0$. The matrix $M$ just defined has equal rows. We found this construction convenient for visualizing the sums. By a direct computation, one verifies that the sum of the entries strictly below the diagonal of $M$ is
\[
\sum_{i>j}M_{ij} = \sum_{i,j}M_{ij} - \sum_{i\le j}M_{ij} = (d-1)\sum_{j=0}^{d-2}M_{1j} - \sum_{i\le j}M_{ij} = (d-1)\left(\frac{d}{d-1}\right)^{d-k}+\partial_{i_1\cdots i_k}H({\bf c})\,.
\]

On the other hand, the sum of the entries strictly below the diagonal of $M$ is
\[
\begin{gathered}
\sum_{i>j}M_{ij} =
\sum_{j=0}^{d-3}(d-2-j)\binom{d-k}{j-k+2}\left(\frac{1}{d-1}\right)^{j-k+2}=
(d-k)\sum_{j=0}^{d-3}\binom{d-k-1}{j-k+2}\left(\frac{1}{d-1}\right)^{j-k+2}=\\
= (d-k)\sum_{j=k-2}^{d-3}\binom{d-k-1}{j-k+2}\left(\frac{1}{d-1}\right)^{j-k+2}=
(d-k)\sum_{s=0}^{d-k-1}\binom{d-k-1}{s}\left(\frac{1}{d-1}\right)^s=\\
=(d-k)\left(\frac{d}{d-1}\right)^{d-k-1}\,.
\end{gathered}
\]
Hence
\[
-\partial_{i_1\cdots i_k}H({\bf c}) = (d-1)\left(\frac{d}{d-1}\right)^{d-k} - (d-k)\left(\frac{d}{d-1}\right)^{d-k-1}=  k\left(\frac{d}{d-1}\right)^{d-k-1}\,.\qedhere
\]
\end{proof}
\end{lemma}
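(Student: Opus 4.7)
My plan is to avoid the matrix bookkeeping in the excerpt and instead derive the identity directly by splitting a single binomial sum.

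For the first claim, I would simply observe that each elementary symmetric polynomial $e_i({\bf x})$ is multilinear in $x_1,\dots,x_d$, so every variable appears with degree at most one in $H$; hence $\partial_{ii}H\equiv 0$.

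For the second claim, the starting point is the formula
\[
\partial_{i_1\cdots i_k} e_j({\bf x}) = e_{j-k}(\widehat{{\bf x}_{i_1\cdots i_k}}),
\]
valid for distinct indices $i_1,\dots,i_k$, which follows from writing $e_j({\bf x})$ as a sum over $j$-subsets. Summing against the coefficients $(1-j)$ gives
\[
-\partial_{i_1\cdots i_k}H({\bf x}) = \sum_{j=k}^{d}(j-1)\,e_{j-k}(\widehat{{\bf x}_{i_1\cdots i_k}}),
\]
and after evaluating at ${\bf c}$, where $e_{j-k}$ in $d-k$ equal variables $\tfrac{1}{d-1}$ becomes $\binom{d-k}{j-k}(d-1)^{-(j-k)}$, and reindexing $i=j-k$, I obtain
\[
-\partial_{i_1\cdots i_k}H({\bf c}) \;=\; \sum_{i=0}^{d-k}(i+k-1)\binom{d-k}{i}\left(\tfrac{1}{d-1}\right)^{i}.
\]

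The key step is to split this sum as $\sum i\binom{d-k}{i}(\cdot)^i + (k-1)\sum\binom{d-k}{i}(\cdot)^i$. The second piece is $(k-1)\bigl(\tfrac{d}{d-1}\bigr)^{d-k}$ by the binomial theorem. For the first piece, I use the absorption identity $i\binom{d-k}{i}=(d-k)\binom{d-k-1}{i-1}$, shift the index, and apply the binomial theorem again to get $\tfrac{d-k}{d-1}\bigl(\tfrac{d}{d-1}\bigr)^{d-k-1}$. Factoring $\bigl(\tfrac{d}{d-1}\bigr)^{d-k-1}$ out of the sum reduces the bracket to $\tfrac{d-k+(k-1)d}{d-1}=\tfrac{k(d-1)}{d-1}=k$, yielding the claimed $k\bigl(\tfrac{d}{d-1}\bigr)^{d-k-1}$.

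I expect no serious obstacle: the only minor subtlety is handling the endpoint $i=0$ correctly in the absorption identity (the term vanishes, so shifting the index is harmless), and verifying that the formula holds in the edge case $k=d$, where the right-hand side becomes $d\bigl(\tfrac{d}{d-1}\bigr)^{-1}=d-1$ and the left-hand side is $-\partial_{1\cdots d}H({\bf c})=(d-1)e_0=d-1$. Everything else is bookkeeping with binomial coefficients.
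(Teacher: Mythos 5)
Your proof is correct. The setup coincides exactly with the paper's: multilinearity of the $e_i$ gives $\partial_{ii}H=0$, and the formula $\partial_{i_1\cdots i_k}e_j=e_{j-k}(\widehat{{\bf x}_{i_1\cdots i_k}})$ reduces everything to evaluating
\[
\sum_{i=0}^{d-k}(i+k-1)\binom{d-k}{i}\left(\tfrac{1}{d-1}\right)^{i}.
\]
The only divergence is in how this sum is computed. The paper introduces a $(d-1)\times(d-1)$ matrix with equal rows and double-counts the entries strictly below its diagonal, arriving at $(d-1)\bigl(\tfrac{d}{d-1}\bigr)^{d-k}-(d-k)\bigl(\tfrac{d}{d-1}\bigr)^{d-k-1}$; you instead split off the $(k-1)$-multiple of the plain binomial sum and handle the $i\binom{d-k}{i}$ part with the absorption identity, arriving at $(k-1)\bigl(\tfrac{d}{d-1}\bigr)^{d-k}+\tfrac{d-k}{d-1}\bigl(\tfrac{d}{d-1}\bigr)^{d-k-1}$. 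Both expressions simplify to $k\bigl(\tfrac{d}{d-1}\bigr)^{d-k-1}$, and I have checked your arithmetic ($d-k+kd-d=k(d-1)$) as well as your $k=d$ sanity check. Your route is the more standard and economical one — it avoids the matrix bookkeeping entirely — while the paper's version buys only a visual aid for organizing the same cancellation. No gaps.
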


\begin{proposition}\label{prop: pointisgood}
The point ${\bf c} = (\frac{1}{d-1}, \frac{1}{d-1}, \ldots, \frac{1}{d-1})\in \mathcal V$ is smooth with $c_d\partial_d H({\bf c}) \neq 0$, strictly minimal, critical, isolated, and non-degenerate.
\begin{proof}
We first show that the point ${\bf c} = (\frac{1}{d-1}, \frac{1}{d-1}, \ldots, \frac{1}{d-1})$ sits in $\mathcal V$. This choice of ${\bf c}$ is the one made by Pantone in \cite{Pan}. We have
\[
\begin{gathered}
\sum_{i=0}^d(1-i)e_i({\bf c}) = \sum_{i=0}^d(1-i)\binom{d}{i}\left(\frac{1}{d-1}\right)^i=
\sum_{i=0}^d\binom{d}{i}\left(\frac{1}{d-1}\right)^i-\sum_{i=0}^di\binom{d}{i}\left(\frac{1}{d-1}\right)^i=\\
=\left(\frac{d}{d-1}\right)^d-\frac{d}{d-1}\sum_{i=1}^d\binom{d-1}{i-1}\left(\frac{1}{d-1}\right)^{i-1}=
\left(\frac{d}{d-1}\right)^d-\frac{d}{d-1}\sum_{j=0}^{d-1}\binom{d-1}{j}\left(\frac{1}{d-1}\right)^j=\\
=\left(\frac{d}{d-1}\right)^d-\left(\frac{d}{d-1}\right)^d=0\,,
\end{gathered}
\]
namely ${\bf c}$ belongs to $\mathcal V$. Moreover, by Lemma \ref{lem: partial H} we have that
\[
-\partial_d H({\bf c}) = \left(\frac{d}{d-1}\right)^{d-2}\neq 0\,,
\]
namely ${\bf c}$ is a smooth point of $\mathcal V$. The criticality of ${\bf c}$ follows from the symmetry of both ${\bf c}$ and $H$. Furthermore, the point ${\bf c}$ is strictly minimal and isolated by the proofs of \cite[Proposition 2.3, Proposition 2.6]{Pan}.

It remains to show that the point ${\bf c}$ is non-degenerate. By \cite[Definition 3.1]{RW}, the point ${\bf c}$ is non-degenerate whenever the quantity $\det \tilde{g}''(0)\neq 0$. By \cite[Proposition 4.2]{RW}, 
\[
\det\tilde{g}''(0) = d\,q^{d-1}\,,\quad q = 1 + c_1\frac{\partial_{dd} H({\bf c}) - \partial_{1 d} H({\bf c})}{\partial_d H({\bf c})}\,.
\]
By Lemma \ref{lem: partial H}, we have the identities
\[
\partial_{dd} H({\bf c})=0,\quad-\partial_{1d} H({\bf c}) = 2\left(\frac{d}{d-1}\right)^{d-3}\,.
\]
Substituting these values in the expressions for $q$ and $\det\tilde{g}''(0)$, we observe that ${\bf c}$ is non-degenerate:
\begin{align*}
& q = 1 - \left(\frac{1}{d-1}\right)\frac{2\left(\frac{d}{d-1}\right)^{d-3}}{\left( \frac{d}{d-1}\right)^{d-2}} = \frac{d-2}{d}\,,\\
& \det\tilde{g}''(0) = d\left(\frac{d-2}{d}\right)^{d-1} = \frac{(d-2)^{d-1}}{d^{d-2}}\neq 0\,.\qedhere
\end{align*} 
\end{proof}
\end{proposition}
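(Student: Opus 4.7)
The plan is to verify the six conditions (membership in $\mathcal V$, smoothness with $c_d\partial_d H(\mathbf c)\neq 0$, strict minimality, criticality, isolation, non-degeneracy) one by one, leaning heavily on the explicit values of the partial derivatives of $H$ at $\mathbf c$ provided by Lemma \ref{lem: partial H} and on the symmetry of the whole configuration under the action of the symmetric group $S_d$ permuting the coordinates.

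First I would check that $\mathbf c\in\mathcal V$ by a direct binomial computation: since $e_i(\mathbf c)=\binom{d}{i}(d-1)^{-i}$, the defining sum $\sum_{i=0}^d(1-i)\binom{d}{i}(d-1)^{-i}$ splits into two pieces, each equal to $(d/(d-1))^d$ after a standard binomial identity (absorbing the factor $i$ via $i\binom{d}{i}=d\binom{d-1}{i-1}$), and the two pieces cancel. Smoothness together with $c_d\partial_d H(\mathbf c)\neq 0$ then follow immediately from Lemma \ref{lem: partial H} applied with $k=1$, which gives $-\partial_d H(\mathbf c)=(d/(d-1))^{d-2}$. Criticality is free from symmetry: since $H$ is $S_d$-invariant and $\mathbf c$ is fixed by $S_d$, all the partial derivatives $\partial_i H(\mathbf c)$ are equal, and since the coordinates of $\mathbf c$ are also all equal, all of the products $c_i\partial_i H(\mathbf c)$ coincide.

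For strict minimality and isolatedness I would appeal directly to Pantone's analysis of the same point in a closely related setting, namely \cite[Proposition 2.3, Proposition 2.6]{Pan}, since the relevant geometry of the polydisc $\mathbb D(\mathbf c)$ and the critical point locus of $H$ is identical to the one treated there. This avoids redoing an analytic argument that has no interaction with the specific coefficients defining $H$.

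The most delicate step, and the one I expect to be the main obstacle, is the verification of non-degeneracy. Here one has to unwind the Raichev--Wilson definition and in particular to compute $\det\tilde g''(0)$; by \cite[Proposition 4.2]{RW} this determinant equals $d\,q^{d-1}$ with
\[
q=1+c_1\frac{\partial_{dd}H(\mathbf c)-\partial_{1d}H(\mathbf c)}{\partial_d H(\mathbf c)}\,.
\]
Lemma \ref{lem: partial H} provides exactly the inputs needed: the ``diagonal'' second derivative vanishes, $\partial_{dd}H(\mathbf c)=0$, while the mixed one is $-\partial_{1d}H(\mathbf c)=2(d/(d-1))^{d-3}$. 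Substituting and using the already computed value of $\partial_d H(\mathbf c)$ produces the clean expression $q=(d-2)/d$, and hence $\det\tilde g''(0)=(d-2)^{d-1}/d^{d-2}$, which is nonzero for $d\geq 3$ (the range relevant to Theorem \ref{thm: hyperdetapprox}). The key is simply to trust Lemma \ref{lem: partial H} and avoid reopening the combinatorial identity it encapsulates.
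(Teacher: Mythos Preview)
Your proposal is correct and follows essentially the same approach as the paper: the same binomial computation for membership in $\mathcal V$, Lemma \ref{lem: partial H} for smoothness, symmetry for criticality, the citation of \cite[Propositions 2.3 and 2.6]{Pan} for strict minimality and isolatedness, and the combination of \cite[Proposition 4.2]{RW} with Lemma \ref{lem: partial H} to compute $q=(d-2)/d$ and $\det\tilde g''(0)=(d-2)^{d-1}/d^{d-2}$ for non-degeneracy.
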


\begin{proposition}\label{prop: L_0}
The following identity holds true:
\[
L_0(\tilde{u}_0, \tilde{g}) = \tilde{u}_0({\bf c}) = \frac{(d-1)^{2d-2}}{d^{2d-4}}\,.
\] 
\begin{proof}
The quantity $L_k(\tilde{u}_j,\tilde{g})$ is defined in the statement of \cite[Theorem 3.2]{RW}. For $k=0$, we have $L_0(\tilde{u}_j,\tilde{g}) = \tilde{u}_j({\bf c})$. 
By \cite[Proposition 4.3]{RW}, we have $\tilde{u}_0({\bf c}) = \frac{G({\bf c})}{(-c_d \partial_d H({\bf c}))^2}$, which gives the second equality in the statement.  
\end{proof}
\end{proposition}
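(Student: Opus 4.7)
The plan is to simply combine the explicit formulas already at hand. By construction, the generating function of Theorem \ref{thm: hyperdet} has the shape $F=G/H^{2}$ with
\[
G({\bf x})\equiv 1,\qquad H({\bf x})=\sum_{i=0}^{d}(1-i)\,e_{i}({\bf x}),
\]
so $p=2$ in the Raichev--Wilson setup, and the strictly minimal critical point of interest is ${\bf c}=\bigl(\tfrac{1}{d-1},\ldots,\tfrac{1}{d-1}\bigr)$ by Proposition \ref{prop: pointisgood}.

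First, I would invoke the definition of $L_{k}(\tilde{u}_{j},\tilde{g})$ from the statement of \cite[Theorem 3.2]{RW}, specialized at $k=0$: this immediately gives $L_{0}(\tilde{u}_{0},\tilde{g})=\tilde{u}_{0}({\bf c})$, which is the first equality of the proposition. Next I would quote \cite[Proposition 4.3]{RW}, which for a function of the form $G/H^{2}$ evaluates the weight as
\[
\tilde{u}_{0}({\bf c})=\frac{G({\bf c})}{\bigl(-c_{d}\,\partial_{d}H({\bf c})\bigr)^{2}}.
\]
Since $G\equiv 1$, only the denominator needs to be computed.

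The denominator is handled by Lemma \ref{lem: partial H} applied with $k=1$, which yields
\[
-\partial_{d}H({\bf c})=\left(\frac{d}{d-1}\right)^{d-2}.
\]
Multiplying by $c_{d}=\tfrac{1}{d-1}$ gives
\[
-c_{d}\,\partial_{d}H({\bf c})=\frac{1}{d-1}\left(\frac{d}{d-1}\right)^{d-2}=\frac{d^{d-2}}{(d-1)^{d-1}},
\]
so squaring and inverting produces
\[
\tilde{u}_{0}({\bf c})=\frac{(d-1)^{2d-2}}{d^{2d-4}},
\]
which is the claimed value. There is no serious obstacle here; the whole argument is bookkeeping, and the only thing to be careful about is matching the Raichev--Wilson normalization (in particular $p=2$ and $\alpha={\bf 1}$) so that the formula from \cite[Proposition 4.3]{RW} can be quoted verbatim.
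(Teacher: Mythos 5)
Your proposal is correct and follows exactly the same route as the paper: the first equality from the definition of $L_0$ in \cite[Theorem 3.2]{RW}, the second from \cite[Proposition 4.3]{RW} with $G\equiv 1$, and then the denominator $-c_d\partial_d H({\bf c})=d^{d-2}/(d-1)^{d-1}$ via Lemma \ref{lem: partial H}. You merely spell out the final arithmetic that the paper leaves implicit.
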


\begin{theorem}\label{thm: hyperdetapprox}
Let $N(n{\bf 1}^d)$ be the degree of the hyperdeterminant of format $(n+1)^{\times d}$. Then asymptotically,  for $d\geq 3$ and $n\rightarrow +\infty$, 
\[
N(n{\bf 1}^d) \approx \frac{(d-1)^{2d-2}}{[2\pi(d-2)]^{\frac{d-1}{2}}\,d^{\frac{3d-6}{2}}}\cdot \frac{(d-1)^{dn}}{n^{\frac{d-3}{2}}}\,. 
\]
\begin{proof}
By Theorems \ref{thm: asympt} and \ref{thm: hyperdet}, for $N = 1$ and ${\bf c} = (\frac{1}{d-1},\ldots, \frac{1}{d-1})$, we have
\[
N(n{\bf 1}^d) \approx (d-1)^{dn} \left[\frac{1}{(2\pi n)^{\frac{d-1}{2}}\det\tilde{g}''(0)^{\frac{1}{2}}}\left(n\,L_0(\tilde{u}_0,\tilde{g}) + L_0(\tilde{u}_0, \tilde{g}) + L_0(\tilde{u}_1,\tilde{g})\right) + O\left(\frac{1}{n^{\frac{d-1}{2}}}\right)\right]\,,
\]
\noindent as $n\rightarrow \infty$. Define
\[
\eta_d\coloneqq\frac{1}{(2\pi)^{\frac{d-1}{2}}\det\tilde{g}''(0)^{\frac{1}{2}}}\,.
\]
Then
\[
N(n{\bf 1}^d) \approx (d-1)^{dn} \left[\frac{\eta_d\,L_0(\tilde{u}_0,\tilde{g})\,n}{n^{\frac{d-1}{2}}}+\frac{\eta_d(L_0(\tilde{u}_0,\tilde{g}) + L_0(\tilde{u}_1,\tilde{g}))}{n^{\frac{d-1}{2}}} + O\left(\frac{1}{n^{\frac{d-1}{2}}}\right)\right]\,.
\]
Since the second and third summand in the square brackets are both $O(n^{\frac{1-d}{2}})$, then
\begin{align*}
N(n{\bf 1}^d) &\approx (d-1)^{dn} \left[\frac{\eta_d\,L_0(\tilde{u}_0,\tilde{g})\,n}{n^{\frac{d-1}{2}}} + O\left(\frac{1}{n^{\frac{d-1}{2}}}\right)\right] \\
&= (d-1)^{dn} \left[\frac{\eta_d\,L_0(\tilde{u}_0,\tilde{g})\,n}{n^{\frac{d-1}{2}}} + \frac{n}{n}O\left(\frac{\eta_d\,L_0(\tilde{u}_0,\tilde{g})}{n^{\frac{d-1}{2}}}\right)\right] \\
&= (d-1)^{dn} \left[\frac{\eta_d\,L_0(\tilde{u}_0,\tilde{g})\,n}{n^{\frac{d-1}{2}}} + \frac{\eta_d\,L_0(\tilde{u}_0,\tilde{g})\,n}{n^{\frac{d-1}{2}}}O\left(\frac{1}{n}\right)\right]\\
&= \eta_d\,L_0(\tilde{u}_0,\tilde{g})\frac{(d-1)^{dn}}{n^{\frac{d-3}{2}}} \left[1+O\left(\frac{1}{n}\right)\right]\,.
\end{align*}
Conclusion follows by plugging in the identity for $L_0(\tilde{u}_0,\tilde{g})$ of Proposition \ref{prop: L_0}.
\end{proof}
\end{theorem}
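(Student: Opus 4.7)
The plan is to apply the Raichev--Wilson asymptotic formula (Theorem \ref{thm: asympt}) directly to the generating function $F = 1/H^2$ supplied by Theorem \ref{thm: hyperdet}, so that $G \equiv 1$ and $H({\bf x}) = \sum_{i=0}^d (1-i)e_i({\bf x})$. The whole analysis would be centered at the symmetric point ${\bf c} = (\tfrac{1}{d-1},\ldots,\tfrac{1}{d-1}) \in \mathcal V$, which is forced both by the $S_d$-symmetry of $H$ and by the analogous choice that Pantone makes in \cite{Pan} for the ED-degree generating function; in fact the very same $H$ appears in \cite{Pan}, so strict minimality and isolation of ${\bf c}$ can be imported verbatim from that paper. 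The remaining hypotheses in Theorem \ref{thm: asympt} (smoothness, $c_d\partial_d H({\bf c}) \ne 0$, criticality and non-degeneracy, together with the explicit value of $\det \tilde g''(0)$) are what I would package into Proposition \ref{prop: pointisgood}, relying on the computation of the partial derivatives $\partial_{i_1\cdots i_k}H({\bf c})$ carried out in Lemma \ref{lem: partial H}. The final ingredient, $L_0(\tilde u_0,\tilde g) = \tilde u_0({\bf c}) = (d-1)^{2d-2}/d^{2d-4}$, I would pin down in Proposition \ref{prop: L_0} via the explicit formula of \cite[Proposition~4.3]{RW}.

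With those inputs, I would specialize Theorem \ref{thm: asympt} to $p=2$, $\alpha = {\bf 1}$, and $N=1$, so that only the $k=0$ terms remain in the finite sum. This collapses the bracket to
\[
\bigl((2\pi n)^{d-1}\det \tilde g''(0)\bigr)^{-1/2}\Bigl[(n+1)L_0(\tilde u_0,\tilde g) + L_0(\tilde u_1,\tilde g)\Bigr] + O\!\bigl(n^{-(d-1)/2}\bigr).
\]
The contributions of $L_0(\tilde u_1,\tilde g)$ and of the constant piece of $n+1$ are both of order $n^{-(d-1)/2}$, hence are absorbed into the explicit error term, and the leading-order contribution reduces to $n\,L_0(\tilde u_0,\tilde g)$ multiplied by the prefactor. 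Pulling out ${\bf c}^{-n{\bf 1}} = (d-1)^{dn}$ and substituting the explicit constants from Proposition \ref{prop: pointisgood} and Proposition \ref{prop: L_0} then yields exactly
\[
\frac{(d-1)^{2d-2}}{[2\pi(d-2)]^{(d-1)/2}\,d^{(3d-6)/2}}\cdot\frac{(d-1)^{dn}}{n^{(d-3)/2}}.
\]

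The main obstacle is conceptual rather than computational: the bookkeeping needed to argue that, after factoring out $n \cdot n^{-(d-1)/2} = n^{-(d-3)/2}$, the remaining error is genuinely $O(1/n)$ and not of the same order as the main term. This is what forces the choice $N=1$ in Raichev--Wilson (so that the error beats the leading term by exactly one power of $n$) and it is why the $L_0(\tilde u_1,\tilde g)$ contribution can be dropped without computing it. Everything else is a pleasant exercise in algebraic simplification once the symmetric evaluation point is in place; in particular, no further study of the geometry of $\mathcal V$ beyond what is already needed for the ED-degree asymptotics in \cite{Pan} is required.
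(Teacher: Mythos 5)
Your proposal matches the paper's proof essentially step for step: the same application of Raichev--Wilson with $p=2$, $N=1$ at the symmetric point ${\bf c}=(\tfrac{1}{d-1},\ldots,\tfrac{1}{d-1})$, the same delegation of the hypothesis checks to Proposition \ref{prop: pointisgood} and Lemma \ref{lem: partial H} (with strict minimality and isolation imported from Pantone), the same evaluation of $L_0(\tilde u_0,\tilde g)$ via Proposition \ref{prop: L_0}, and the same absorption of the $L_0(\tilde u_1,\tilde g)$ and constant contributions into the $O(n^{-(d-1)/2})$ error term before factoring out $(d-1)^{dn}n^{-(d-3)/2}$. No substantive differences.
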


\begin{remark}
For $d=3$ and $d=4$ we recover the approximations
\[
N(n{\bf 1}^3) \approx \frac{8^{n+1}}{3\sqrt{3}\pi}\left[1+O\left(\frac{1}{n}\right)\right],\quad
N(n{\bf 1}^4) \approx \frac{3^6}{2^9\pi\sqrt{\pi}}\frac{81^{n}}{\sqrt{n}}\left[1+O\left(\frac{1}{n}\right)\right]\,.
\]
The approximation formula for $d=3$ appears also in \cite[A176097]{Slo} and it is interesting to compare it with the asymptotic result proved  by Ekhad and Zeilberger for the EDdegree which is (see \cite{EZ,Pan})
\[
C(n{\bf 1}^3)\approx\frac{2}{\sqrt{3}\pi}\frac{8^{n}}{n}\left[1+O\left(\frac{1}{n}\right)\right]\,.
\]
\end{remark}

\begin{remark}[{\bf A Segre-Veronese hyperdeterminant}]\label{rmk: SegVer}
Theorem \ref{thm: hyperdetapprox} can be generalized further to a special class of Segre-Veronese varieties, with essentially the same calculations.
We preferred to keep the statement for hypercubical hyperdeterminants of Segre varieties for the ease of notation.

Given $\omega\in\Z_{\ge 0}$, the degree $\omega$ {\it Veronese embedding} of $\PP(W)$ is the image $v_{\omega}\PP(W)$ of the projective morphism
\begin{equation}\label{eq: Ver embedding}
v_{\omega}\colon \PP(W)\to\PP(S^{\omega} W)
\end{equation}
defined by $v_{\omega}([z])\coloneqq[z^{\omega}]$, where $S^{\omega}W$ is the degree $\omega$ symmetric power of $W$.

Segre-Veronese varieties are obtained combining the Veronese embedding $v_\omega$ already defined with the Segre embedding $\mathrm{Seg}$ introduced in Definition \ref{def: Segre embedding}. More precisely, let $(\omega_1,\ldots,\omega_d)\in\mathbb N^d$. The {\it degree $(\omega_1,\ldots,\omega_d)$ Segre-Veronese embedding} of $\PP(V_1)\times \cdots \times \PP(V_d)$ is the Segre embedding of the product $v_{\omega_1}\PP(V_1)\times\cdots\times v_{\omega_d}\PP(V_d)$, which we keep calling $v_{\omega_1}\PP(V_1)\times\cdots\times v_{\omega_d}\PP(V_d)$ for simplicity.

Now let $N(k_1,\ldots,k_d; \omega_1,\ldots,\omega_d)$ be the degree of the hyperdeterminant, namely the polynomial defining the dual hypersuperface, when defined,
of $v_{\omega_1}\PP(V_1)\times\cdots\times v_{\omega_d}\PP(V_d)$. Assume that $\omega_1=\cdots=\omega_d=\omega$ for some $\omega\in\mathbb{N}$. Applying \cite[Theorem 2.4]{GKZ}, one verifies that the generating function of the degrees $N_d(k_1,\ldots, k_d;\omega^d)$ is given by
\[
\sum_{k\in \mathbb N^d} N_d(k_1,\ldots,k_d;\omega^d) {\bf x}^{k} = \left[\sum_{i=0}^d (1-\omega i) e_i({\bf x})\right]^{-2}\,.
\]

In order to apply again Theorem \ref{thm: asympt}, a possible choice of a point is ${\bf c}=\left(\frac{1}{\omega d-1},\ldots,\frac{1}{\omega d-1}\right)$.
Then asymptotically, for $d\geq 3$ and $n\rightarrow +\infty$
\[
N(n{\bf 1}^d;\omega^d) \approx \frac{(\omega d-1)^{2d-2}}{[2\pi(\omega d-2)]^{\frac{d-1}{2}}\,\omega^{\frac{4d-5}{2}}\,d^{\frac{3d-6}{2}}}\,\frac{(\omega d-1)^{dn}}{n^{\frac{d-3}{2}}}\,. 
\]
\end{remark}

\begin{remark}[{\bf The case of the discriminant}]\label{rmk: discriminant}
Now we focus on a special case of Remark \ref{rmk: SegVer}, namely when $d=1$. Here, the number $N(n;\omega)$ is the degree of the discriminant polynomial $\Delta_{n,\omega}$, i.e., the degree of the dual variety of the Veronese variety $X=v_\omega\PP(V)$.
It is interesting to recall that when $\omega$ is even, denoting by $\|x\|^2$ the squared Euclidean norm of $x$, for any $f\in S^\omega V$ the polynomial
\begin{equation}\label{eq:charpoly}
\psi_f(\lambda)\coloneqq\Delta_{n,\omega}(f(x)-\lambda\|x\|^\omega)\,,
\end{equation}
where $\|x\|^\omega = \left(\|x\|^2\right)^{\omega/2}$, vanishes exactly at the eigenvalues of $f$ (see \cite[Theorem 2.23]{QZ}, \cite[Theorem 3.8]{Sod18}). This is a beautiful result first discovered by Liqun Qi, who called $\psi_f$ the {\it E-characteristic polynomial} of $f$. Its degree is equal to
\begin{equation}\label{eq: Frobenius ED degree Veronese}
\mathrm{EDdegree}_F(X)=
\begin{cases}
n+1 & \mbox{for $\omega=2$}\\
\frac{(\omega-1)^{n+1}-1}{\omega-2} & \mbox{for $\omega>2$}\,,
\end{cases}
\end{equation}
where the quantity $\mathrm{EDdegree}_F(X)$ is recalled in Definition \ref{def: EDdegree}.
The identity \eqref{eq: Frobenius ED degree Veronese} was proved in \cite[Theorem 5.5]{CS} and is a particular case of \cite[Theorem 12]{FO}. However, this result had already essentially been known in complex dynamics by the work of Forn{\ae}ss and Sibony \cite{FS}.

From equation \eqref{eq: Frobenius ED degree Veronese}, we see that $\mathrm{EDdegree}_F(X)$ is much less than the degree of $\Delta_{n,\omega}$, which is $N(n;\omega)=(n+1)(\omega-1)^n$. The reason of this huge degree drop in (\ref{eq:charpoly}) is that the polynomial $\|x\|^\omega$ (for $\omega>2$) defines a non-reduced hypersurface, hence highly singular, where $\Delta_{n,\omega}$ vanishes with high multiplicity: having a root of high multiplicity in (\ref{eq:charpoly}) for $\lambda=+\infty$ corresponds to a large degree drop.
Our asymptotic analysis for the hyperdeterminant originally arose from the desire to understand whether some analogous result could have been true in the non-symmetric setting. One has the ratio
\[
\frac{N(n;\omega)}{\mathrm{EDdegree}_F(X)}\approx\frac{\omega-2}{\omega-1}\,n\quad\mbox{as $n\to+\infty$}\,.
\]
One might also analyze the asymptotics of $N(n;\omega)$ and $\mathrm{EDdegree}_F(X)$ for $n$ fixed and $\omega\to+\infty$. In this case, we have the ratio
\[
\frac{N(n;\omega)}{\mathrm{EDdegree}_F(X)}\approx n+1\quad\mbox{as $\omega\to+\infty$}\,.
\]
Instead of $\mathrm{EDdegree}_F(X)$, we might consider the {\em generic ED degree} of $X$, recalled in Definition \ref{def: generic EDdegree}. It was shown in \cite[Proposition 7.10]{DHOST} that
\[
\mathrm{EDdegree}_{\mathrm{gen}}(X)=\frac{(2\omega-1)^{n+1}-(\omega-1)^{n+1}}{\omega}
\]
(see Remark \ref{rmk: gen EDdegree Segre} for a more general case).
From the last formula, we obtain the ratio
\[
\frac{N(n;\omega)}{\mathrm{EDdegree}_{\mathrm{gen}}(X)}\approx \frac{n+1}{2^{n+1}-1}\quad\mbox{as $\omega\to+\infty$}\,.
\]
\end{remark}

\subsection{Asymptotics for the hyperdeterminant of \texorpdfstring{$(\mathbb P^1)^{\times d}$}{P1 d}}\label{sec: anotherhyperdet}

The degree of the hyperdeterminant of $X=(\mathbb P^1)^{\times d}$ is denoted by $N({\bf 1}^d)$. Here we analyze the asymptotics with respect to $d$. By \cite[Corollary 2.10, Chapter 14]{GKZ}, the exponential generating function for $N({\bf 1}^d)$ is
\[
\sum_{d=0}^{+\infty}N({\bf 1}^d)\frac{x^d}{d!}=e^{-2x}(1-x)^{-2}\,. 
\]

\begin{proposition}\label{prop: binaryasympt}
	For $d\gg 0$, we have
	\[
	N({\bf 1}^d) \approx \sqrt{2\pi}\,\frac{d^{\frac{2d+1}{2}}(d+3)}{e^{d+2}}\,.
	\]
	\begin{proof}
		We have
		\[
		e^{-2x}(1-x)^{-2}=\sum_{i=0}^{+\infty}\frac{(-2)^i}{i!}x^i\sum_{j=0}^{+\infty}(j+1)x^j=\sum_{d=0}^{+\infty}\left[d!\sum_{i=0}^{d}\frac{(-2)^i}{i!}(d-i+1)\right]\frac{x^d}{d!}\,.
		\] 
		Therefore,
		\[
		\begin{gathered}
		N({\bf 1}^d) = d!\sum_{i=0}^{d}\frac{(-2)^i}{i!}(d-i+1) =\\
		= d!\left[(d+1)\sum_{i=0}^{d}\frac{(-2)^i}{i!}-\sum_{i=1}^{d}\frac{(-2)^i}{(i-1)!}\right] = d!\left[(d+1)\sum_{i=0}^{d}\frac{(-2)^i}{i!}+2\sum_{j=0}^{d-1}\frac{(-2)^j}{j!}\right]\,.
		\end{gathered}
		\]
		Both the inner sums converge to $e^{-2}$ as $d\to+\infty$. Using the Stirling approximation for the factorial
		\begin{equation}\label{eq: Stirling}
		d! \approx \sqrt{2\pi}\,\frac{d^{\frac{2d+1}{2}}}{e^d}\,, 
		\end{equation}
		we obtain the desired asymptotic formula. 
	\end{proof}
\end{proposition}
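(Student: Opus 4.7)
The strategy is to extract $N({\bf 1}^d)$ explicitly from the exponential generating function $e^{-2x}(1-x)^{-2}$ by computing a Cauchy product, and then invoke Stirling's formula. First I would write $e^{-2x}=\sum_{i\ge 0}\frac{(-2)^i}{i!}x^i$ and $(1-x)^{-2}=\sum_{j\ge 0}(j+1)x^j$ and multiply these two series. Matching the coefficient of $x^d/d!$ on both sides of the generating function identity yields the closed-form expression
\[
N({\bf 1}^d)=d!\sum_{i=0}^{d}\frac{(-2)^i}{i!}(d-i+1).
\]

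Next I would split the inner factor as $(d-i+1)=(d+1)-i$ to decompose the sum into two pieces. The first piece gives $(d+1)\sum_{i=0}^{d}\frac{(-2)^i}{i!}$. In the second piece the factor $i$ cancels the $i!$ in the denominator, and after reindexing $j=i-1$ one obtains $2\sum_{j=0}^{d-1}\frac{(-2)^j}{j!}$. Hence
\[
N({\bf 1}^d)=d!\left[(d+1)\sum_{i=0}^{d}\frac{(-2)^i}{i!}+2\sum_{j=0}^{d-1}\frac{(-2)^j}{j!}\right].
\]
Both bracketed sums are truncations of the Taylor expansion of $e^{-2}$, and they converge to $e^{-2}$ super-exponentially fast (the tail after $d$ terms is $O(2^d/d!)$). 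Therefore the bracket equals $(d+3)e^{-2}$ up to an error that is negligible compared to the leading term.

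Finally, applying Stirling's approximation $d!\approx\sqrt{2\pi}\,d^{(2d+1)/2}/e^{d}$ gives
\[
N({\bf 1}^d)\approx\sqrt{2\pi}\,\frac{d^{(2d+1)/2}(d+3)}{e^{d+2}},
\]
which is the desired asymptotic.

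There is no substantive obstacle in this plan; the only point requiring a bit of care is preserving the additive $+3$ rather than just $+1$ in the coefficient $(d+3)$. That additional $+2$ arises from the second (reindexed) partial sum, which is often discarded as a lower order term but here contributes to the leading coefficient multiplying $d!$ because it is comparable in magnitude to the $+1$ from the first piece. Once that bookkeeping is done, Stirling handles the rest.
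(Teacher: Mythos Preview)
Your proposal is correct and follows essentially the same route as the paper: compute the Cauchy product of $e^{-2x}$ and $(1-x)^{-2}$, split $(d-i+1)=(d+1)-i$ and reindex to obtain the two truncated exponentials converging to $e^{-2}$, then apply Stirling. Your remark about keeping track of the $+2$ contribution from the reindexed sum is exactly the bookkeeping the paper implicitly uses to arrive at the factor $(d+3)$.
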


\begin{remark}
	Applying Theorem \ref{thm: FO formula}, one verifies that
	\[
	\mathrm{EDdegree}_F(X)=d!\,.
	\]
	Moreover, the generic ED degree of $X$ is (see Remark \ref{rmk: gen EDdegree Segre} for a more general formula)
	\begin{equation}\label{eq: gen EDdegree Segre P1}
	\mathrm{EDdegree}_{\mathrm{gen}}(X) = d!\sum_{i=0}^d\frac{(-2)^i}{i!}(2^{d+1-i}-1)
	= d!\left[2^{d+1}\sum_{i=0}^d\frac{(-1)^i}{i!}-\sum_{i=0}^d\frac{(-2)^i}{i!}\right]\,.
	\end{equation}
	The two inner sums converge to $e^{-1}$ and $e^{-2}$ as $d\to+\infty$, respectively. Using \eqref{eq: Stirling} we obtain the asymptotic formulas
	\[
	\mathrm{EDdegree}_F(X) \approx \sqrt{2\pi}\,\frac{d^{\frac{2d+1}{2}}}{e^d}\,,\quad\mathrm{EDdegree}_{\mathrm{gen}}(X) \approx \sqrt{2\pi}\,\frac{d^{\frac{2d+1}{2}}}{e^{d+2}}(2^{d+1}e-1)\,,
	\]
	so that
	\[
	\frac{N({\bf 1}^d)}{\mathrm{EDdegree}_F(X)}\approx \frac{d+3}{e^2}\,,\quad \frac{N({\bf 1}^d)}{\mathrm{EDdegree}_{\mathrm{gen}}(X)}\approx \frac{d+3}{e^2(2^{d+1}e-1)}\quad\mbox{as $d\to+\infty$}\,.
	\]
	In particular, the degree of the hyperdeterminant $N({\bf 1}^d)$ grows faster than $\mathrm{EDdegree}_F(X)$ and slower than $\mathrm{EDdegree}_{\mathrm{gen}}(X)$ as $d\to+\infty$.
\end{remark}

\section{Stabilization of the ED degree of some Segre varieties}\label{sec: stabilization}

Throughout the section, we let $V_1^\mR,\ldots,V_d^\mR$ be real vector spaces of dimensions $\dim(V_i^\mR)=n_i+1$, respectively. Recall from \S\ref{sec: prelim} that a real tensor of format $(n_1+1)\times \cdots \times (n_d+1)$ is  a multilinear map $t\colon (V_1^\mR)^*\times \cdots \times (V_d^\mR)^*\rightarrow \R$, i.e., an element of the tensor product (over $\mathbb R$) $V^\mR\coloneqq V_1^\mR\otimes\cdots\otimes V_d^\mR$. 

Suppose each $V_j^\mR$ comes equipped with an inner product (and so with a natural distance function). Their tensor product $V^\mR$ inherits a natural inner product defined as follows.

\begin{definition}\label{def: Frobenius}
The {\it Frobenius inner product} of two real decomposable tensors $t = x_1\otimes\cdots\otimes x_d$ and $t' = y_1\otimes\cdots\otimes y_d$ is
\begin{equation}\label{eq: Frobenius inner product for tensors}
q_F(t, t')\coloneqq q_1(x_1, y_1)\cdots q_d(x_d, y_d)\,,
\end{equation}
and it is naturally extended to every vector in $V^\mR$. When $V_j^\mR$ is equipped with the standard Euclidean inner product for all $j\in[d]$, one finds that
\begin{equation}\label{eq: Frobenius in coordinates for tensors}
q_F(t,t')=\sum_{i_1,\ldots,i_d}t_{i_1,\ldots,i_d}t'_{i_1,\ldots,i_d}
\end{equation}
for all $t,t' \in V^\mR$. The {\it (squared) distance function} is then $\delta_F(t,t')\coloneqq q_F(t-t', t-t')$ for all $t,t'\in V^\mR$.
\end{definition}

Analogously to what happens in a Euclidean space, it is natural to look at critical points of the distance function $\delta_F(t, \cdot)\colon X^\mR\to\R$ from a given tensor $t\in V^\mR$ with respect to some special sets $X^\mR\subset V^\mR$.
The most relevant for our purposes is the real affine cone over the Segre variety $X^\mR=\PP(V_1^\mR)\times\cdots\times\PP(V_d^\mR)$.
This leads us to the more general definition of ED degree of an affine variety to be discussed in a moment.

Let $V^\mR$ be a real vector space equipped with a distance function $\delta\colon V^\mR\times V^\mR\rightarrow \mathbb R$. Let $X^\mR\subset V^\mR$ be a real affine variety and let $u\in V$ be general. Consider the complex vector space $V\coloneqq V^\mR\otimes\C$ and the complex variety $X\coloneqq X^\mC$.
The distance function $\delta$ is extended to a complex-valued function $\delta\colon V\times V\to\C$ (which is not a Hermitian inner product).
The point is that even though the function $\delta$ is truly a distance function only over the reals, the complex critical points of $\delta$ on $X$ are important to draw all the metric information about the real affine cone $X^\mR$. 

\begin{definition}[ED degree \cite{DHOST}]\label{def: EDdegree}
The {\it Euclidean distance degree} ({\it ED degree}) of $X$ is the (finite) number of {\it complex} critical points of the function $\delta(u,\_)\colon X\setminus X_{\mathrm{sing}}\to \C$, where $X_{\mathrm{sing}}$ is the singular locus of $X$. We denote it by $\mathrm{EDdegree}_\delta(X)$ in order to stress its dependence on $\delta$.
\end{definition}

\begin{definition}[Generic ED degree]\label{def: generic EDdegree}
The {\em isotropic quadric} associated to $\delta$ is the quadric hypersurface $Q_\delta\coloneqq\{x\in V\mid \delta(v,v)=0\}$. When $X$ is transversal to $Q_\delta$ (this assumption holds for a general $Q_\delta$), the ED degree of $X$ with respect to $\delta$ is called {\em generic ED degree} of $X$ and it is denoted by $\mathrm{EDdegree}_\mathrm{gen}(X)$.
\end{definition}

In the following, we focus on the case when $V = V_1\otimes \cdots \otimes V_d$, $X=\PP(V_1)\times\cdots\times\PP(V_d)$ and $\delta = \delta_F$. We write $\mathrm{EDdegree}_F(X)$ to indicate $\mathrm{EDdegree}_{\delta_F}(X)$. The elements of the isotropic quadric $Q_{\delta_F}$ are called {\it isotropic tensors} of $V$.

Given a tensor $t\in V$, we refer to (complex) critical points of $\delta_{F}(t,\cdot)$ on $X$ simply as {\it critical points}. Lim \cite{L} and Qi \cite{Q} independently defined {\em singular vector tuples} of tensors and associated them to non-isotropic critical points of the distance function from the affine cone over the Segre variety $X^\mR$. The next result is a reformulation of \cite[Eq. (9)]{L} and of \cite[Lemma 19]{FO}.

\begin{theorem}\label{thm: singular vector tuple Lim Qi}
Given a real tensor $t\in V^\mR$, the non-isotropic decomposable critical points of $t$ correspond to tensors $v=\sigma\left(x^{(1)}\otimes\cdots\otimes x^{(d)}\right)\in V$ such that $q_j\left(x^{(j)}, x^{(j)}\right)=1$ for all $j\in[d]$ and 
\begin{equation}\label{eq: system singular vector tuple}
q_F\left(t, x^{(1)}\otimes\cdots\otimes  x^{(j-1)}\otimes\_\!\otimes x^{(j+1)}\otimes\cdots\otimes x^{(d)}\right)=\sigma\,q_j\left(x^{(j)},\_\right)\quad\forall\,j\in[d] 
\end{equation}
for some $\sigma\in\C$, called a {\it singular value} of $t$ corresponding to $v$. The corresponding $d$-ple $\left(x^{(1)},\ldots, x^{(d)}\right)$ is called {\it singular vector $d$-ple} of $t$. (In view of these results, we shall refer to singular vector tuples simply as {\it critical points}.)
\end{theorem}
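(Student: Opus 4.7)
The plan is to parameterize the affine cone over the Segre variety by the multilinear map $\Phi\colon V_1\times\cdots\times V_d\to V$ given by $\Phi(y^{(1)},\ldots,y^{(d)})=y^{(1)}\otimes\cdots\otimes y^{(d)}$. Every nonzero decomposable tensor $v=y^{(1)}\otimes\cdots\otimes y^{(d)}$ with all $y^{(j)}\neq 0$ is a smooth point of the cone, and by multilinearity the tangent space there is
\[
T_v\;=\;\sum_{j=1}^{d}\,y^{(1)}\otimes\cdots\otimes y^{(j-1)}\otimes V_j\otimes y^{(j+1)}\otimes\cdots\otimes y^{(d)}.
\]
Hence $v$ is critical for $\delta_F(t,\cdot)$ on the cone if and only if $q_F(t-v,w)=0$ for every $w\in T_v$, i.e.\ for each $j\in[d]$ and each $h\in V_j$,
\[
q_F\bigl(t,\,y^{(1)}\otimes\cdots\otimes h\otimes\cdots\otimes y^{(d)}\bigr)=q_F\bigl(v,\,y^{(1)}\otimes\cdots\otimes h\otimes\cdots\otimes y^{(d)}\bigr).
\]

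Next I would expand the right-hand side using the multiplicative formula \eqref{eq: Frobenius inner product for tensors} on decomposable tensors to obtain $\prod_{i\neq j} q_i(y^{(i)},y^{(i)})\cdot q_j(y^{(j)},h)$. The non-isotropy hypothesis $q_F(v,v)=\prod_i q_i(y^{(i)},y^{(i)})\neq 0$ ensures every factor $q_i(y^{(i)},y^{(i)})$ is nonzero, so it admits a (complex) square root $\alpha_i$. Normalizing $x^{(i)}:=y^{(i)}/\alpha_i$ and setting $\sigma:=\alpha_1\cdots\alpha_d$, we have $q_i(x^{(i)},x^{(i)})=1$ for all $i$ and $v=\sigma(x^{(1)}\otimes\cdots\otimes x^{(d)})$. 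Substituting $y^{(i)}=\alpha_i x^{(i)}$ into the criticality equations and cancelling the common factor $\prod_{i\neq j}\alpha_i$ from both sides yields exactly
\[
q_F\bigl(t,\,x^{(1)}\otimes\cdots\otimes h\otimes\cdots\otimes x^{(d)}\bigr)=\sigma\,q_j(x^{(j)},h),
\]
which is the claimed singular vector tuple equation. The converse implication (that any normalized tuple satisfying the equations yields a critical point) is obtained by reading the computation in the opposite direction.

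The main technical point is essentially bookkeeping: one has to keep track of the scaling redundancy of $\Phi$, so that the correspondence between non-isotropic critical points and normalized tuples $(x^{(1)},\ldots,x^{(d)},\sigma)$ is well-defined only up to the sign ambiguity $x^{(j)}\mapsto\varepsilon_j x^{(j)}$, $\sigma\mapsto\sigma\prod_j \varepsilon_j$ with $\varepsilon_j\in\{\pm 1\}$ coming from the choice of square roots. The only other subtlety is to verify that the tangent space $T_v$ is indeed spanned by the single-factor variations displayed above; this follows from the surjectivity of the differential $d\Phi$ at any nonzero decomposable point, which is immediate from multilinearity of $\Phi$.
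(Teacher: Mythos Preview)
Your argument is correct and is the standard derivation of the singular vector tuple equations from the Euclidean criticality condition on the affine cone over the Segre variety. Note that the paper does not actually supply its own proof of this statement: it is presented as a reformulation of \cite[Eq.~(9)]{L} and \cite[Lemma~19]{FO}, and no argument is given in the text. Your write-up is precisely the computation underlying those references, so there is nothing to compare beyond observing that you have filled in what the paper leaves to citation.
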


\noindent For each $j\in[d]$, equation \eqref{eq: system singular vector tuple} may be written as
\begin{equation}\label{eq: system singular vector tuple, in coordinates}
t\left(x^{(1)}\otimes\cdots\otimes x^{(j-1)}\otimes x^{(j+1)}\otimes\cdots\otimes  x^{(d)}\right)=\sigma\,x_j\,, 
\end{equation}
where, on the left-hand side, we have a contraction of the tensor $t$ along the $j$-th direction. In analogy with matrices, the {\it best rank one approximation problem} for $t$ is solved as indicated in the following result by Lim \cite[Eq. (9)]{L} which we reformulate similarly to \cite[Theorem 20]{FO}.

\begin{theorem}\label{thm: Lim, variational problem}
Let $t\in V^\mR$ be a real tensor. Then $t$ admits real singular values and real critical points. Suppose $\widetilde{\sigma}$ is a real singular value of $t$ such that $\widetilde{\sigma}^2$ is maximum, and assume $\widetilde{v}=\widetilde{\sigma}\left(\widetilde{x}^{(1)}\otimes \cdots \otimes\widetilde{x}^{(d)}\right)$ is a critical point corresponding to $\widetilde{\sigma}$. Then $\widetilde{v}$ is a best rank one approximation of the tensor $t$. Moreover, a best rank one approximation of $t$ is unique if $t\in V$ is general.
\end{theorem}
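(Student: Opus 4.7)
To produce real singular values and real critical points I would invoke compactness. The map
\[
f \colon S^{n_1} \times \cdots \times S^{n_d} \longrightarrow \mathbb{R}, \qquad \bigl(x^{(1)}, \ldots, x^{(d)}\bigr) \longmapsto q_F\bigl(t,\, x^{(1)}\otimes\cdots\otimes x^{(d)}\bigr),
\]
with $S^{n_j}$ the unit sphere in $V_j^\mR$ with respect to $q_j$, is continuous on a compact domain, hence attains a global maximum and minimum. At any constrained critical point of $f$, the Lagrange multiplier conditions relative to the normalizations $q_j(x^{(j)}, x^{(j)}) = 1$ are exactly the system \eqref{eq: system singular vector tuple} with $\sigma = f(x^{(1)}, \ldots, x^{(d)})$. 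In particular the global extrema produce real singular vector $d$-ples whose singular value $\widetilde{\sigma}$ is real; taking the extremum with the largest absolute value gives an $\widetilde{\sigma}^2$ which is maximal among real singular values.

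Next I would identify this extremal problem with the best rank one approximation. For any real rank one candidate $v = \sigma\,x^{(1)}\otimes\cdots\otimes x^{(d)}$ with unit factors, the Frobenius squared distance expands as
\[
\delta_F(t, v) \;=\; q_F(t,t) \;-\; 2\sigma\, q_F\bigl(t, x^{(1)}\otimes\cdots\otimes x^{(d)}\bigr) \;+\; \sigma^2.
\]
Minimizing over $\sigma \in \mathbb{R}$ for fixed unit factors forces $\sigma = q_F\bigl(t, x^{(1)}\otimes\cdots\otimes x^{(d)}\bigr)$ and produces the residual $q_F(t,t) - \sigma^2$. Consequently, minimizing $\delta_F(t,v)$ over real rank one tensors is equivalent to maximizing $\sigma^2$ over the product of unit spheres; by the previous paragraph this maximum is realized precisely at $\widetilde{v}$, so $\widetilde{v}$ is a best rank one approximation of $t$.

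For the uniqueness statement I would argue that the locus of tensors admitting two distinct best rank one approximations is contained in a proper Zariski-closed subset of $V$. By Theorem \ref{thm: singular vector tuple Lim Qi} the set of critical points of $\delta_F(t,\_)$ on the Segre cone is finite for $t$ outside a proper algebraic set, and the corresponding singular values are algebraic functions of $t$ via the ED correspondence. The condition that two distinct critical points share the same maximal value of $\sigma^2$ cuts out a discriminant-type subvariety of $V$, and off this subvariety the maximum is attained at a single critical point, yielding uniqueness of $\widetilde{v}$. The main technical obstacle I anticipate is exactly this last step: one must rule out that the relevant discriminant vanishes identically. I would handle this by exhibiting a single tensor---for instance a suitably generic diagonal tensor, or a small perturbation of a tensor with the Eckart--Young spectrum when $d=2$---whose real critical values are all distinct, thereby confirming that the ``multiple best approximations'' locus is a proper subvariety of $V$.
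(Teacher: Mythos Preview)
The paper does not actually supply a proof of this theorem: it is quoted as a known result due to Lim \cite[Eq.~(9)]{L}, reformulated along the lines of \cite[Theorem~20]{FO}. So there is no in-paper argument to compare against; your proposal should be judged on its own and against those references.

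Your first two steps are exactly the classical variational argument. Compactness of the product of real unit spheres gives extrema of $f$, Lagrange multipliers reproduce \eqref{eq: system singular vector tuple} with real $\sigma$, and the expansion of $\delta_F(t,v)$ reduces best rank-one approximation to maximizing $\sigma^2$. This is essentially Lim's proof, so those parts are fine and complete.

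The uniqueness step is, as you anticipate, the only place where something substantive remains. Your outline---finiteness of critical points off a proper subvariety via the ED correspondence, then a discriminant condition for coincidence of critical values, then a single witness tensor to show the discriminant is not identically zero---is the right strategy and is in the spirit of how \cite{FO} handles it. Two cautions. First, ``diagonal'' tensors for $d\ge 3$ have many more singular vector tuples than the obvious axis ones, so verifying distinctness of all $\sigma^2$ on such an example is not automatic; a safer witness is a generic numerical tensor of small format, checked explicitly, or the $d=2$ Eckart--Young case embedded as a slice. Second, be precise about what ``general'' means here: the statement concerns real tensors, so you want the bad locus to be a proper real-algebraic subset of $V^\mR$ (equivalently, the complex discriminant hypersurface does not contain $V^\mR$), which your witness argument indeed delivers.
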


The number of singular vector $d$-ples of a general tensor $t\in V$, i.e., the ED degree of the Segre variety $X=\PP(V_1)\times\cdots\times\PP(V_d)$ with respect to the distance function $\delta_F$ in $V^\mR$, is the content of the next result.

\begin{theorem}{\cite[Theorem 1]{FO}}\label{thm: FO formula}
	The ED degree of the Segre variety $X=\PP(V_1)\times\cdots\times\PP(V_d)\subset\PP(V)$ with respect to the Frobenius inner product $\delta_F$ in $V^\mR$ equals the coefficient of the monomial $h_1^{n_1}\cdots h_d^{n_d}$ in the polynomial
	\[
	\prod_{i=1}^d\frac{\widehat{h}_i^{n_i+1}-h_i^{n_i+1}}{\widehat{h}_i-h_i},\quad\widehat{h}_i\coloneqq\sum_{j\neq i}^dh_j\,.
	\]
\end{theorem}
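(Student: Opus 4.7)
My plan is to realize the projective critical locus of $\delta_F(t,\cdot)$ on $X = \PP(V_1)\times\cdots\times\PP(V_d)$ as the zero scheme of a tautological section of a vector bundle on $X$, and compute its top Chern class. Let $\pi_i\colon X\to \PP(V_i)$ be the $i$-th projection, set $S_i\coloneqq \pi_i^*\mathcal{O}_{\PP(V_i)}(-1)$, $\mathcal{L}_i\coloneqq S_i^{\vee}$, $h_i\coloneqq c_1(\mathcal{L}_i)$, and $\widehat{\mathcal{L}}_i\coloneqq\bigotimes_{j\ne i}\mathcal{L}_j$, so that $c_1(\widehat{\mathcal{L}}_i) = \widehat{h}_i$. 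Using $q_F$ to identify each $V_j$ with $V_j^*$, the tensor $t$ induces for each $i$ a bundle morphism $\bigotimes_{j\ne i} S_j\to V_i\otimes\mathcal{O}_X$ obtained by contracting $t$ in all slots but the $i$-th; composing with the canonical quotient $V_i\otimes\mathcal{O}_X\twoheadrightarrow (V_i/S_i)\otimes\mathcal{O}_X$ yields a section $s_i^t$ of $F_i\coloneqq (V_i/S_i)\otimes\widehat{\mathcal{L}}_i$. Then $s^t\coloneqq(s_1^t,\ldots,s_d^t)$ is a section of $E\coloneqq\bigoplus_i F_i$, and by Theorem \ref{thm: singular vector tuple Lim Qi} its zeros are exactly the points $([x_1],\ldots,[x_d])\in X$ at which $t(x_1,\ldots,\widehat{x_i},\ldots,x_d)\in\C x_i$ for every $i$, i.e.\ the projectivizations of the singular vector $d$-tuples of $t$, which are precisely the critical points counted by $\mathrm{EDdegree}_F(X)$.

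Since $\mathrm{rank}(E) = \sum_{i}n_i = \dim X$, one expects a zero-dimensional zero locus counted by $\int_X c_{\dim X}(E)$. The main obstacle is to verify transversality of $s^t$ for generic $t$, so that this Chern class integral gives the actual number of simple zeros. I would either argue via a Bertini-type theorem applied to the linear evaluation map $V\to H^0(X,E)$, $t\mapsto s^t$, after checking that this map has image generating each fibre of $E$ (which is clear because the contraction map is surjective pointwise); or alternatively pick an explicit tensor $t_0$ (for instance a suitably generic ``diagonal'' one) for which one checks by hand that the singular vector system has the expected number of simple solutions, and then invoke semicontinuity of the fibre length to conclude that $s^t$ is transverse on a Zariski open subset of $V$.

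Granting transversality, the Chern class computation is clean. The tautological sequence $0\to S_i\to V_i\otimes\mathcal{O}_X\to V_i/S_i\to 0$ gives $c(V_i/S_i) = (1-h_i)^{-1}$, and since $h_i^{n_i+1}=0$ in $H^{*}(X)$ this truncates to $c_k(V_i/S_i) = h_i^{k}$ for $0\le k\le n_i$. The splitting principle applied to the twist of a rank-$n_i$ bundle by a line bundle yields
\[
c_{n_i}(F_i) \;=\; \sum_{k=0}^{n_i} c_k(V_i/S_i)\,\widehat{h}_i^{\,n_i-k} \;=\; \sum_{k=0}^{n_i} h_i^{\,k}\,\widehat{h}_i^{\,n_i-k} \;=\; \frac{\widehat{h}_i^{\,n_i+1} - h_i^{\,n_i+1}}{\widehat{h}_i - h_i}\,,
\]
and since for a direct sum the top Chern class is the product of the top Chern classes of the summands, $c_{\dim X}(E) = \prod_i c_{n_i}(F_i)$. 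Integrating against $[X] = [\PP^{n_1}\times\cdots\times\PP^{n_d}]$ extracts the coefficient of the monomial $h_1^{n_1}\cdots h_d^{n_d}$, reproducing exactly the stated formula.
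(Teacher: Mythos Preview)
The paper does not prove this statement; it is quoted verbatim as \cite[Theorem 1]{FO} and used as a black box. Your proposal is correct and reproduces essentially the original Friedland--Ottaviani argument: realize the projective critical locus as the zero scheme of the tautological section $s^t$ of $E=\bigoplus_i (V_i/S_i)\otimes\widehat{\mathcal L}_i$, observe that $\mathrm{rank}(E)=\dim X$, invoke generic transversality (the linear family $t\mapsto s^t$ generates every fibre of $E$, so a Bertini/Kleiman argument applies), and compute $c_{\dim X}(E)=\prod_i c_{n_i}(F_i)$ via the tautological exact sequence and the twist formula. The Chern class manipulations you give are correct, and integrating over $X$ indeed extracts the coefficient of $h_1^{n_1}\cdots h_d^{n_d}$. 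There is nothing in the present paper to compare your argument against beyond the citation itself.
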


Now assume that the tensor $t\in V$ is expressed in coordinates by the multidimensional array $(t_{i_1\cdots\,i_d})$, where $i_j\in[n_j+1]$ for all $j\in[d]$. Then the critical points of $t$ are of the form $\sigma (x^{(1)}\otimes \cdots \otimes x^{(d)})\in V$, with no zero component, and satisfy equations \eqref{eq: system singular vector tuple, in coordinates} which can be rewritten as 
\[
\sum_{i_\ell\in[n_l+1]}t_{i_1\cdots\,i_j\cdots\,i_d}\,x^{(1)}_{i_1}\cdots \widehat{x^{(j)}_{i_j}}\cdots x^{(d)}_{i_d}=\sigma\,x^{(j)}_{i_j}\quad \forall\,i_j\in[n_j+1]\,. 
\]
Eliminating the parameter $\sigma\in \C$, one derives the multilinear relations (for all $1\le k<s\le n_j+1$ and for all $j\in[d]$) that all critical points must satisfy: 

\begin{equation}\label{eq: equations singular space}
\sum_{i_\ell\in[n_l+1]}\left(t_{i_1\cdots\,k\,\cdots\,i_d}\,x^{(1)}_{i_1}\cdots x^{(j)}_s\cdots x^{(d)}_{i_d}-t_{i_1\cdots\,s\,\cdots\,i_d}\,x^{(1)}_{i_1}\cdots x^{(j)}_k\cdots x^{(d)}_{i_d}\right)=0\,.
\end{equation}

\begin{definition}[{\bf Critical space of a tensor}]\label{def: critical space}
The {\it critical space} (or {\it singular space}) $H_t$ of the tensor $t\in V$ is the linear projective space defined the equations (in the unknowns $z_{i_1\cdots i_d}$ that serve as linear functions on $V$)
\[
\sum_{i_\ell\in[n_l+1]}\left(t_{i_1\cdots\,k\,\cdots\,i_d}\,z_{i_1\cdots\,s\,\cdots\,i_d}-t_{i_1\cdots\,s\,\cdots\,i_d}\,z_{i_1\cdots\,k\,\cdots\,i_d}\right)=0\quad\forall\,1\le k<s\le n_j+1,\ \forall j\in[d]\,.
\]
\end{definition}

\begin{remark}
The tensor $t$ belongs to its critical space $H_t$ \cite[\S 5.2]{OP}.
For a general tensor $t$, let $Z_t$ denote the set of critical points and consider its projective span $\langle Z_t\rangle$.
Then $\langle Z_t\rangle \subset H_t$ and \cite[\S 3.5]{DOT} shows that they coincide for formats $(n_1+1)\times\cdots\times(n_d+1)$ satisfying a {\it triangle inequality} (i.e. the so-called {\it sub-boundary format}).
However, they do not concide in every format.
For the Segre variety of format $2\times 2\times 4$, $Z_t$ consists of $8$ critical points with $\langle Z_t\rangle \cong \PP^6$, but $H_t \cong \PP^7$. 
\end{remark}

\begin{lemma}\label{lem: singularvaluezero}
Let $X=\PP(V_1)\times\cdots\times\PP(V_d)\times \PP(W)$ with $\dim(W) = m+1$. Let $t\in V\otimes W$ be a {\it non-concise} tensor, i.e. there exists a proper subspace $L\subset W$ such that $t\in V\otimes L$. Then either a critical point is in $X\cap \PP(V \otimes L)$ or its singular value is zero.  
\begin{proof}
Fix bases for the vector spaces $V_i$ and a basis $\lbrace y_i\rbrace$ for $W$, and assume $\dim(L) = \ell+1\leq m+1$. By assumption, there exists a change of bases such that we may write the tensor $t\in V \otimes W$ as 
\[
t = \sum_{i=1}^{\ell} t_i\otimes y_i\,. 
\]
Now, let $v = x_1\otimes \cdots \otimes x_d\otimes z\in X$ be a critical point of $t$ with a non-zero singular value $\sigma\neq 0$. By their defining equations \eqref{eq: system singular vector tuple, in coordinates}, we have
\[
t(x_1\otimes \cdots \otimes x_d) = \sigma z\,.
\]
Denote by $\{y_j^*\}\subset W^*$ the dual basis of $\{y_j\}$. Since $t\in V \otimes L$, and since $z$ is the result of a contraction of $t$, this vector satisfies $y_j^{*}(z) = 0$ for all $\ell+1\leq j\leq m$. However, in the given basis, these are the defining equations of $\PP(L)\subset\PP(W)$ and so $v\in X\cap \PP(V \otimes L)$. 
\end{proof}
\end{lemma}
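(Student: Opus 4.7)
The plan is to feed the critical-point system from Theorem~\ref{thm: singular vector tuple Lim Qi} into the non-conciseness hypothesis, evaluating the equation corresponding to the last tensor factor. Writing a critical point as $v = x_1\otimes\cdots\otimes x_d\otimes z$ with singular value $\sigma$, the instance of \eqref{eq: system singular vector tuple, in coordinates} for the $W$-factor gives the identity
\[
t(x_1\otimes\cdots\otimes x_d) = \sigma\,z
\]
in $W$, where the left-hand side denotes the contraction of $t$ against $x_1,\ldots,x_d$ along the first $d$ directions. So the whole proof has to hinge on what this contraction can look like when $t$ is non-concise.

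The key observation I would make is that if $t\in V\otimes L$, then this contraction must lie in $L$: the contraction map $V\otimes W\to W$ obtained by pairing the $V$-factor with $x_1\otimes\cdots\otimes x_d$ (viewed as an element of $V^*$ through the Frobenius identification on each $V_i$) restricts to a map $V\otimes L\to L$. Hence $\sigma z\in L$. If $\sigma=0$ we are done; otherwise $\sigma\neq 0$ forces $z\in L$, and then $v=x_1\otimes\cdots\otimes x_d\otimes z$ itself sits in $V\otimes L$, giving $v\in X\cap\PP(V\otimes L)$.

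I do not expect a real obstacle here; the lemma is essentially a direct consequence of the defining equations of critical points combined with the fact that partial contraction of a tensor preserves the subspace in which the tensor lies. The only bookkeeping is making the Frobenius contraction explicit, which one can do by choosing a basis $\{y_1,\ldots,y_\ell,y_{\ell+1},\ldots,y_m\}$ of $W$ adapted to $L=\mathrm{span}(y_1,\ldots,y_\ell)$, writing $t=\sum_{i=1}^{\ell}t_i\otimes y_i$ with $t_i\in V$, and checking that the components of $\sigma z$ along $y_{\ell+1}^*,\ldots,y_m^*$ vanish identically; when $\sigma\neq 0$ this forces the corresponding components of $z$ to vanish, so $z\in L$ as claimed.
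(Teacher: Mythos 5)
Your argument is correct and coincides with the paper's proof: both evaluate the critical-point equation \eqref{eq: system singular vector tuple, in coordinates} on the $W$-factor, note that the contraction $t(x_1\otimes\cdots\otimes x_d)=\sigma z$ lies in $L$ because $t\in V\otimes L$, and conclude that $\sigma\neq 0$ forces $z\in L$. The basis-adapted bookkeeping you describe at the end is exactly what the paper does.
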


\begin{remark}
Let $d=3$ and $n_1=n_2=m=1$. A general tensor $t\in V\otimes W$ has $6$ distinct singular values. Assume that $t$ is non-concise, in particular $t\in V\otimes L$, where $L\subset W$ with $\dim(L)=1$. We consider the ED polynomial $\mathrm{EDpoly}_{X^\vee,t}(\varepsilon^2)$ of the dual variety of $X=\PP(V_1)\times\PP(V_2)\times\PP(W)$ at $t$ (see \S\ref{sec: edpolysegre} for the definition of ED polynomial). It turns out that the roots of $\mathrm{EDpoly}_{X^\vee,t}(\varepsilon^2)$ are the squared singular values of $t$ (see \cite[Proposition 5.1.4]{Sod20}). The second author computed symbolically in \cite[\S 5.4]{Sod20} the ED polynomial $\mathrm{EDpoly}_{X^\vee,u}(\varepsilon^2)$, for any $u\in V\otimes W$, as a univariate polynomial in $\varepsilon^2$ of degree $6$ whose coefficients are homogeneous polynomials in the entries $u_{ijk}$ of $u$. In particular, when $u=t$ and assuming that $V\otimes L$ has equations $t_{112}=t_{122}=t_{212}=t_{222}=0$, one verifies by direct computation that
\[
\mathrm{EDpoly}_{X^\vee,t}(\varepsilon^2) = c(t)\det(AA^T-\varepsilon^2I)\,\varepsilon^8
\]
where $c(t)$ is a homogeneous polynomial in $t_{ijk}$ and $A\in V$ is the $2\times 2$ slice $(t_{ij1})$ of $t$. On one hand, $\det(AA^T-\varepsilon^2I)=\mathrm{EDpoly}_{Y^\vee,A}(\varepsilon^2)$, where
\[
Y=X\cap\PP(V\otimes L)=\PP(V_1)\times\PP(V_2)\times\PP(L)\cong\PP^1\times\PP^1\subset\PP^3\,,
\]
i.e., the variety of $2\times 2$ matrices of rank one. In particular, two of the singular values of $t$ are non-zero and correspond to critical points in $Y$. On the other hand, the factor $\varepsilon^8=(\varepsilon^2)^4$ tells us that the remaining $4$ singular values of $t$ are zero, thus confirming Lemma \ref{lem: singularvaluezero}.
\end{remark}

\begin{example}
Keep the notation from Lemma \ref{lem: singularvaluezero}.
It is possible that a given non-concise tensor $t$ possesses critical points {\it outside} $X\cap \PP(V\otimes L)$ as shown by the next example.
Let $d=3$, $n_1=n_2=1$ and $m=2$. Consider the following non-concise tensor $t\in V\otimes L\subset V\otimes W$, where $L$ is the hyperplane $L =\lbrace z_2=0\rbrace$:
\[
t = 4\,x_0y_0z_0+0.1\,x_1y_0z_0+0.556\,x_0y_1z_0+2.5\,x_1y_1z_0+2\,x_0y_0z_1+2.667\,x_1y_0z_1+x_0y_1z_1+x_1y_1z_1\,.
\]

We list in Table \ref{tab: triples 2x2x3 tensor} the $8$ singular vector triples $[(a_0,a_1),(b_0,b_1),(c_0,c_1,c_2)]$ of $t$, together with their corresponding singular values $\sigma$. Observe that $t$ possesses two critical points with singular value zero in the orthogonal $L^{\perp} = \langle z_2\rangle$.

\begingroup
\setlength{\tabcolsep}{5pt}
\renewcommand{\arraystretch}{1.15}
\begin{table}[ht]
	\centering
	\begin{tabular}{|c||c|c|c|c|c|c|c|c|}
	\hline
	$a_0$ &0.844&0.222&0.980&0.194&0.785+0.443\,$\sqrt{-1}$&0.785-0.443\,$\sqrt{-1}$&
	0.755&0.892\\
	$a_1$ &0.536&-0.975&-0.200&-0.981&-0.862+0.404\,$\sqrt{-1}$&-0.862-0.404\,$\sqrt{-1}$&-0.656&0.452\\
	\hline
	$b_0$ &0.898&0.275&0.077&0.974&0.846+0.265\,$\sqrt{-1}$&0.846-0.265\,$\sqrt{-1}$&0.382&0.410\\
	$b_1$ &0.440&-0.961&-0.997&-0.225&-0.681+0.329\,$\sqrt{-1}$&-0.681-0.329\,$\sqrt{-1}$&0.924&-0.912\\\hline
	$c_0$ &0.751&0.999&0.356&0.512&0.995+0.146\,$\sqrt{-1}$&0.995-0.146\,$\sqrt{-1}$&0&0\\
	$c_1$ &0.660&0.054&-0.935&-0.859&-0.401+0.361\,$\sqrt{-1}$&-0.401-0.361\,$\sqrt{-1}$&0&0\\
	$c_2$ &0&0&0&0&0&0&1&1\\\hline
	$\sigma$ &5.161&2.446&-0.715&-2.321&-2.946-0.495\,$\sqrt{-1}$&2.946-0.495\,$\sqrt{-1}$&0&0\\
	\hline
	\end{tabular}
	\vspace*{1mm}
	\caption{Singular vector triples and singular vectors of a non-concise $2\times2\times3$ tensor.}\label{tab: triples 2x2x3 tensor}
\end{table}
\endgroup

For the convenience of the reader, here we collect a piece of code we used to compute numerically the singular vector triples and the singular values of a format $2\times2\times3$ tensor shown in Table \ref{tab: triples 2x2x3 tensor}, which inspired our results of \S\ref{sec: stabilization}. The following code is written in the software \texttt{Macaulay2}. 
\begin{small}
\begin{Verbatim}[commandchars=\\\{\}]
R = CC[a0,a1,b0,b1,c0,c1,c2,sigma];
aa = matrix\{\{a0,a1\}\}; bb = matrix\{\{b0,b1\}\}; cc = matrix\{\{c0,c1,c2\}\};
for i to 1 do for j to 1 do for k to 2 do t_(i,j,k) = 100*random(CC)
for i to 1 do for j to 1 do t_(i,j,3) = 0
T = sum(2, i -> sum(2, j -> sum(3, k -> t_(i,j,k)*aa_(0,i)*bb_(0,j)*cc_(0,k))));
\textcolor{blue}{-- I is the ideal of the singular vector triples and the singular values of T}
I = ideal(apply(2, i -> sub(contract(aa_(0,i),T),aa_(0,i)=>0)-sigma*aa_(0,i))|
	apply(2, i -> sub(contract(bb_(0,i),T),bb_(0,i)=>0)-sigma*bb_(0,i))|
	apply(3, i -> sub(contract(cc_(0,i),T),cc_(0,i)=>0)-sigma*cc_(0,i))|
	{sum(2, i-> aa_(0,i)^2)-1, sum(2, i -> bb_(0,i)^2)-1, sum(3, i -> cc_(0,i)^2)-1});
H = first entries gens I;
\textcolor{blue}{-- Now we compute numerically the zeros of I using PHCpack}
needsPackage "PHCpack";
elapsedTime solutions = solveSystem H;
triples = apply(#solutions, j -> (solutions#j).Coordinates);
\end{Verbatim}
\end{small}
The list \verb+triples+ stores the eight singular vector triples of \verb+T+ (up to sign) and their singular values.
\end{example}

Recall that the Segre variety $\PP^{n_1}\times \cdots \times \PP^{n_d}\times \PP^N$, where  $N = \sum_{i=1}^{d} n_i$, and the corresponding hyperdeterminant are said to be of {\it boundary format}. This format turns out to be important for our purposes. 

Keeping the notation from above, let $\dim(V_i) = n_i + 1$ and $\dim(W) = m+1$. Let $t\in V\otimes L\subset V\otimes W$, where $L\subset W$ is a hyperplane, namely $t$ is non-concise and has the {\it last slice} zero. Then the hyperdeterminant vanishes on $t$, i.e. $\mathrm{Det}(t)=0$. 

By definition, this means that the tensor $t$ is {\it degenerate}: there exists a non-zero decomposable tensor $v_1\otimes \cdots \otimes v_d \otimes z\in V\otimes W$ such that 
\begin{equation}\label{eq: kerneloft}
t(v_1,\ldots, V_i, \ldots, v_{d}, z) = 0\quad\forall\,i\in[d]\quad\mbox{and}\quad t(v_1,\ldots, v_{d}, W) = 0\,. 
\end{equation}

The {\it kernel} $K(t)$ of a tensor $t$ is the variety of all non-zero $v_1\otimes \cdots \otimes v_{d} \otimes z\in V\otimes W$
such that \eqref{eq: kerneloft} is satisfied. The description of the critical points of $t$ outside the hyperplane $L$ can be given in terms of $K(t)$, this is the content
of our next result.

\begin{theorem}\label{thm: descriptionofcriticalpoints}
 Let $\dim(V_i) = n_i + 1$ and $\dim(W) = m+1$, $N = \sum_{i=1}^{d}n_i$ and $m\ge N$.
Let $t\in V\otimes W$ be a tensor such that the flattening map
\begin{equation}\label{eq:flattening}
\pi_{W}\colon V\to W^{*}
\end{equation}
has rank $N$, i.e., $t\in V\otimes L$ where $L\subset V$ is a subspace of dimension $N$.
Assume that $t$ is general with this property. Then
\begin{enumerate}
\item[$(i)$] the kernel $K(t)$ of $t$ consists of $\frac{N!}{\prod_i n_i!}$ linear spaces of projective dimension
$m-N$ corresponding to the intersection of the kernel of the flattening map (\ref{eq:flattening})
with the  Segre variety of rank one matrices $X=\PP(V_1)\times\cdots\times\PP(V_{d})$, which has degree
$\frac{N!}{\prod_i n_i!}$,

\item[$(ii)$] the points of $K(t)$ are exactly the critical points of $t$ with zero singular value. Moreover, the latter critical points of $t$ are the only ones not lying on $V\otimes L$. In fact, they lie on its orthogonal complement $V\otimes L^\perp$. 
\end{enumerate}
\end{theorem}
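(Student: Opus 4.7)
The plan is to treat the two parts in sequence. For part (i), I will identify the combinatorial count with a Bezout intersection and describe the linear $z$-component using the orthogonal decomposition $W=L\oplus L^\perp$. For part (ii), once (i) is in place, the conclusion follows by comparing the defining equations of critical points (setting $\sigma=0$) with the conditions \eqref{eq: kerneloft} of $K(t)$, and then invoking Lemma \ref{lem: singularvaluezero} to exclude the remaining critical points.

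For part (i), the flattening $\pi_W\colon V\to W^{*}$, $u\mapsto t(u,\cdot)$, has rank $N$ and thus kernel of codimension $N$ in $V$. A decomposable point $v_1\otimes\cdots\otimes v_d\otimes z$ satisfies condition (b) of \eqref{eq: kerneloft} iff $[v_1\otimes\cdots\otimes v_d]\in\PP(\ker\pi_W)$. Since $X=\PP(V_1)\times\cdots\times\PP(V_d)$ has dimension $N$ and degree $\frac{N!}{\prod_i n_i!}$, Bezout gives exactly $\frac{N!}{\prod_i n_i!}$ reduced intersection points for a general $t$. Next, for a fixed $(v_1,\ldots,v_d)$ in this intersection, the admissible $z$'s form $\ker\Phi$, where
\[
\Phi\colon W\to\bigoplus_{i=1}^{d}V_i^{*},\qquad z\mapsto\bigl(t(v_1,\ldots,V_i,\ldots,v_d,z)\bigr)_{i=1}^{d}.
\]
Using $W=L\oplus L^{\perp}$, the fact that $t\in V\otimes L$ forces $\Phi(L^{\perp})=0$, so $\Phi$ factors through $W\twoheadrightarrow L$ and $\ker\Phi=L^{\perp}\oplus\ker(\Phi|_{L})$. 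The induced map $\Phi|_{L}\colon L\to\bigoplus_{i}V_i^{*}$ has source of dimension $N$ and target of dimension $N+d$, hence is generically injective, and one checks that injectivity holds on the open locus of $(t,v_1,\ldots,v_d)$ satisfying our hypotheses. Therefore $\ker\Phi=L^{\perp}$, a linear subspace of projective dimension $m-N$, proving (i).

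For part (ii), by Theorem \ref{thm: singular vector tuple Lim Qi} a decomposable critical point has the form $\sigma(x_1\otimes\cdots\otimes x_d\otimes z)$ satisfying the contraction identities with a common $\sigma$; specializing these to $\sigma=0$ gives precisely conditions (a) and (b) of \eqref{eq: kerneloft}, so the zero-singular-value critical points are exactly the elements of $K(t)$. By Lemma \ref{lem: singularvaluezero}, any critical point with $\sigma\neq 0$ lies in $X\cap\PP(V\otimes L)$; consequently, the critical points that are \emph{not} in $V\otimes L$ must have $\sigma=0$ and belong to $K(t)$. Finally, part (i) gives $z\in L^{\perp}$ for every such point, so the decomposable tensor lies in $V\otimes L^{\perp}$, the Frobenius orthogonal of $V\otimes L$.

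The main obstacle is establishing the injectivity of $\Phi|_{L}$ in part (i), which is the only genuine rank computation. A natural way to make this rigorous is to specialize to the boundary-format case $m=N$, where $L=W$, $L^{\perp}=0$, and the required injectivity is equivalent to the classical smoothness and isolatedness of the kernel points of a general boundary-format tensor (Gelfand--Kapranov--Zelevinsky, \cite[Chapter 14]{GKZ}); semicontinuity then propagates injectivity to the generic locus with $m\ge N$. Once this rank statement is in hand, everything else is either definitional or a direct invocation of Lemma \ref{lem: singularvaluezero} and Theorem \ref{thm: singular vector tuple Lim Qi}.
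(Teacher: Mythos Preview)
Your strategy---establishing $\ker\Phi=L^\perp$ directly in part (i) and then deducing both the dimension of the $z$-fibres and the orthogonality claim in (ii) from it---differs from the paper's, which obtains only the dimension in (i) by citing \cite[Theorem 3.3(i)]{Ott} and then argues $z\in L^\perp$ separately in (ii) via the transpose flattening $\pi_V=\pi_W^T$. Your route is more unified, and the inclusion $L^\perp\subset\ker\Phi$ that you observe is correct and useful.

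The genuine gap is your justification of the injectivity of $\Phi|_L$. The specialization ``to the boundary-format case $m=N$, where $L=W$, $L^\perp=0$'' does not make sense: by hypothesis $\pi_W$ has rank exactly $N$, so $\dim L=N$ for every $m\ge N$, whereas $\dim W=m+1$; when $m=N$ the subspace $L$ is a hyperplane in $W$ and $L^\perp$ is a line, not zero. Furthermore, a \emph{general} boundary-format tensor satisfies $\mathrm{Det}\neq 0$ and hence has $K(t)=\emptyset$, so there are no kernel points whose ``smoothness and isolatedness'' could be quoted from \cite[Chapter 14]{GKZ}. Your dimension count (source $N$, target $N+d$) is also misleading: since $v_1\otimes\cdots\otimes v_d\in\ker\pi_W$ forces $t(v_1,\ldots,v_d,z)=0$ for all $z$, the image of $\Phi$ is constrained to the $N$-dimensional subspace $\{(f_i):f_i(v_i)=0\}\subset\bigoplus_iV_i^*$, so $\Phi|_L$ is a map between spaces of the \emph{same} dimension $N$ and its injectivity is a genuine genericity statement, not automatic. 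A clean fix is to combine your inclusion $L^\perp\subset\ker\Phi$ with the fact, established in \cite[Theorem 3.3(i)]{Ott} and invoked by the paper, that these $N$ linear equations on $W$ are independent for general $t$; equality of dimensions then forces $\ker\Phi=L^\perp$, and the rest of your argument goes through.
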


\begin{proof}
\begin{enumerate}
	\item[$(i)$] The projectivization $\PP(\mathrm{Ker}(\pi_{W}))$ of the kernel of the flattening map \eqref{eq:flattening} has codimension $N$ in $\PP(V)$. By the genericity assumption on $t$, the intersection $\PP(\mathrm{Ker}(\pi))\cap X$ is given by $\deg(X) = \frac{N!}{\prod_i n_i!}$ points. 
	
	For each one of these points $v_1\otimes\ldots\otimes v_{d}$, consider the linear equations in the unknown $z\in W$
	\begin{equation}\label{linearsystker}
	t(v_1,\ldots, V_i, \ldots, v_{d}, z) = 0\quad\forall\,i\in[d]\,.
	\end{equation}
	These are $N$ linear homogeneous equations, which define a linear subspace of $\PP(W)$ of projective dimension
	$m-N$ \cite[Theorem 3.3(i)]{Ott}. Note that the decomposable tensors satisfying the linear system \eqref{linearsystker} are critical points. On the other hand, they are points of $K(t)$ by definition of the kernel of a tensor.
	
	\item[$(ii)$] By Lemma \ref{lem: singularvaluezero}, each critical point of $t$ is either in $V\otimes L$ or it has zero singular value. On the other hand, 
	since the tensor $t$ is general in $V\otimes L$, we may assume $\mathrm{Det}(t)\neq 0$. This implies that every critical point of $t$
	in  $V\otimes L$ has a non-zero singular value. 
	As a consequence, the only critical points outside $V\otimes L$ are the ones with singular value zero. 
	
	To see where they are located and thus establishing the last sentence, we proceed as follows. Since our vector spaces $V$ and $W$ are equipped with an inner product, they come with an indentification with their duals:  $V\cong V^{*}$ and $W\cong W^{*}$. Therefore the flattening map $\pi_{W}$ above may be regarded as a linear map $\pi_{W}\colon V\rightarrow W$. Note that the other flattening map $\pi_{V}\colon W\rightarrow V$ induced by $t$ is dual to $\pi_{W}$. In bases, this amounts to say that $\pi_{V} = \pi_{W}^T$. 
	
	Now, suppose that $t$ has a critical point $v\otimes z\in V\otimes W$ with singular value $\sigma$, where $v = v_1\otimes \cdots \otimes v_{d}$ is a decomposable tensor and $z\in W$. By definition, this means that $v$ and $z$ are non-zero with $\pi_{W}(v) = \sigma z$ and $\pi_{W}^T(z) = \sigma v$. Assume $z\notin L$. 
	As noticed above, this critical point has singular value $\sigma = 0$. 
	Since $t\in V\otimes L$, we have $\mathrm{Im}(\pi_{W}) = L$. Note that $\mathrm{Ker}(\pi_{W}^T) = \mathrm{Im}(\pi_{W})^{\perp} = L^{\perp}$. Since $\pi_{W}^T(z) = \sigma v = 0$, one finds $z\in\mathrm{Ker}(\pi_{W}^T) = L^{\perp}$. 
	In conclusion, the critical points of a general tensor $t$, that are outside $V\otimes L$, lie on its orthogonal complement.\qedhere
\end{enumerate} 
\end{proof}

\noindent Using directly equations \eqref{eq: system singular vector tuple, in coordinates} satisfied by the critical points, we also show the following result.

\begin{theorem}\label{thm: specialization}
Keep the notation from Theorem \ref{thm: descriptionofcriticalpoints}. Let $\mathrm{Det}$ be the hyperdeterminant in the boundary format $(n_1+1)\times \cdots \times(n_d+1) \times (N+1)$. Consider a tensor $t\in V\otimes\C^{N+1}\subset V\otimes W$ with $\mathrm{Det}(t)\neq 0$. Then the critical points of $t$ on the Segre product $\PP(V_1)\times\cdots\times\PP(V_{d})\times\PP(W)$ lie in the subvariety $\PP(V_1)\times\cdots\times\PP(V_{d})\times\PP(\C^{N+1})$.
\end{theorem}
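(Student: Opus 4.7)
The plan is to reduce the statement to the key equivalence that, for a tensor of boundary format $(n_1+1)\times\cdots\times(n_d+1)\times(N+1)$, the hyperdeterminant $\mathrm{Det}$ vanishes exactly when the kernel $K(t)$ from \eqref{eq: kerneloft} is nonempty, and then to show that any critical point with zero singular value would produce a nonzero element of $K(t)$. First I would set $L=\C^{N+1}$ and invoke Lemma \ref{lem: singularvaluezero}: since $m\ge N$ (the case $m=N$ is trivial, as then $\C^{N+1}=W$), the tensor $t$ is non-concise in the $W$-factor, hence every critical point of $t$ on $X=\PP(V_1)\times\cdots\times\PP(V_d)\times\PP(W)$ either already lies in $X\cap\PP(V\otimes L)=\PP(V_1)\times\cdots\times\PP(V_d)\times\PP(\C^{N+1})$ --- which is the desired conclusion --- or has singular value $\sigma=0$. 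The whole argument reduces to ruling out the second case.

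Suppose then for contradiction that a critical point $(x_1,\ldots,x_d,z)$ has $\sigma=0$, and write $v=x_1\otimes\cdots\otimes x_d$. I would apply the critical-point equations \eqref{eq: system singular vector tuple, in coordinates} factor by factor. In the $W$-direction they give $\pi_W(v)=\sigma z=0$, so $t(v_1,\ldots,v_d,W)=0$, which is the second defining condition of \eqref{eq: kerneloft}. In each $V_i$-direction they give that the contraction of $t$ along all other $V_j$ and against $z$ in $W$ equals $\sigma x_i=0$, hence $t(v_1,\ldots,V_i,\ldots,v_d,z)=0$ as a vector in $V_i$ for every $i\in[d]$, matching the first defining condition of \eqref{eq: kerneloft}. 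Therefore $v\otimes z$ is a nonzero decomposable element of $K(t)$.

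Because the format $(n_1+1)\times\cdots\times(n_d+1)\times(N+1)$ is boundary, the hyperdeterminant $\mathrm{Det}$ is precisely the polynomial whose vanishing characterizes degenerate tensors, i.e., those with $K(t)\neq\emptyset$ (this is the characterization recalled in the paragraph preceding the theorem). Hence $\mathrm{Det}(t)\neq 0$ forces $K(t)=\emptyset$, contradicting the previous step. Thus no critical point can have $\sigma=0$, and Lemma \ref{lem: singularvaluezero} forces every critical point into $\PP(V_1)\times\cdots\times\PP(V_d)\times\PP(\C^{N+1})$, as required.

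The only delicate step I anticipate is the careful translation between the $\sigma=0$ specialization of the singular-tuple equations from Theorem \ref{thm: singular vector tuple Lim Qi} and the two kernel conditions of \eqref{eq: kerneloft}: one has to check that the vanishing of the $V_i$-contraction as a \emph{vector} in $V_i$ (not merely its inner product with $x_i$) is what the critical-point equations give, and symmetrically for the $W$-direction. Once this bookkeeping is settled, the rest of the argument is immediate.
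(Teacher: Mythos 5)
Your overall strategy is sound and is essentially a coordinate-free repackaging of the paper's argument: reduce, via Lemma \ref{lem: singularvaluezero}, to ruling out critical points with $\sigma=0$, and rule those out using $\mathrm{Det}(t)\neq 0$. The first two steps are correct, including the ``delicate'' bookkeeping you flag: specializing \eqref{eq: system singular vector tuple, in coordinates} at $\sigma=0$ does give the vanishing of each contraction as a vector, hence both conditions of \eqref{eq: kerneloft}. The problem is a format mismatch in your final step. The element $v\otimes z$ you produce lies in the kernel $K(t)$ of $t$ viewed in the format $(n_1+1)\times\cdots\times(n_d+1)\times(m+1)$, with $z\in W$ and, in the problematic case, $z\notin\C^{N+1}$. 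The characterization ``$\mathrm{Det}=0$ iff degenerate iff $K(t)\neq\emptyset$'' recalled before Theorem \ref{thm: descriptionofcriticalpoints} ties the hyperdeterminant of a given format to the kernel \emph{in that same format}; it does not by itself say that the boundary-format hyperdeterminant of $t\in V\otimes\C^{N+1}$ must vanish when the big-format kernel (which requires a witness $z\in W$) is nonempty. So ``$\mathrm{Det}(t)\neq 0$ forces $K(t)=\emptyset$'' is, as justified, a non sequitur: the two occurrences of the format are different.

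The step is true and the gap is repairable, and the repair is precisely the ingredient the paper's proof invokes: for boundary format, \cite[Chapter 14, Theorem 3.1]{GKZ} states that $\mathrm{Det}(t)\neq 0$ if and only if there is no nonzero decomposable $v=x^{(1)}\otimes\cdots\otimes x^{(d)}$ with $t(v,\cdot)=0$ on $\C^{N+1}$. Your $W$-direction equation already gives $t(v,W)=0$, hence $t(v,\C^{N+1})=0$, so this single condition contradicts $\mathrm{Det}(t)\neq 0$; the $V_i$-direction kernel conditions are not needed at all. (Alternatively, to stay within the kernel formulation: given $t(v,\C^{N+1})=0$, the remaining conditions of \eqref{eq: kerneloft} impose only $N$ linear equations on $z'\in\C^{N+1}$, which therefore admit a nonzero solution, producing an element of the \emph{boundary-format} kernel and hence $\mathrm{Det}(t)=0$.) A secondary remark: the paper works with the $\sigma$-eliminated relations \eqref{eq: equations singular space} defining the critical space, which cover all critical points including isotropic ones, whereas the normalized equations of Theorem \ref{thm: singular vector tuple Lim Qi} are stated only for non-isotropic critical points; if you keep your route through Lemma \ref{lem: singularvaluezero} and the $\sigma$-form equations, you should note that they hold (up to rescaling) for every critical point counted by the ED degree.
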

\begin{proof}
We show that the critical space of $t$ in $\PP(V\otimes W)$ lies inside $\PP(V\otimes\C^{N+1})$. We may assume $\dim(W) = N+2$. 

Consider the equations \eqref{eq: system singular vector tuple, in coordinates} for $j=d$, $s= N+2$. Since by assumption $t_{{i_1}\cdots{i_{d}},N+2}=0$, these relations simplify and we obtain: 
\[
\left[\sum_{i_\ell\in[n_l+1]} t_{i_1\cdots i_{d},k}\,x^{(1)}_{i_1}\cdots x^{(d)}_{i_{d}} \right]x^{(d+1)}_{N+2}=0\quad\forall k\in[N+2]\,.
\]
The equations inside the brackets have no non-zero solutions by the assumption $\mathrm{Det}(t)\neq 0$ and by the description in \cite[Chapter 14, Theorem 3.1]{GKZ}. Hence $x^{(d+1)}_{N+2}=0$, which proves the statement.
\end{proof}

\begin{corollary}\label{cor: stabED}
Let $N = \sum_{i=1}^{d}n_i$. For all $m\geq N$, we have
\begin{equation}\label{eq: stabED}
\mathrm{EDdegree}_F\left(\PP^{n_1}\times\cdots\times\PP^{n_{d}}\times\PP^{m}\right) = \mathrm{EDdegree}_F\left(\PP^{n_1}\times\cdots\times\PP^{n_{d}}\times\PP^{N}\right)\,.
\end{equation}
\begin{proof}
Note that, given a smooth projective variety $X\subset \PP(V)$, when the number of critical points (of the distance function with respect to $X$) of a given point $t\in V$ is finite, it coincides with $\mathrm{EDdegree}(X)$. 
In other words, we may consider convenient specializations in order to compute the ED degree. To evaluate the ED degree on the left-hand side of \eqref{eq: stabED}, we specialize $t$ as in Theorem \ref{thm: specialization}. Then the result of Theorem \ref{thm: specialization} shows that the critical points of $t$ are the same as the ones needed to compute the right-hand side of \eqref{eq: stabED}. 
\end{proof}
\end{corollary}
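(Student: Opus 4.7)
The plan is to reduce the ED degree computation on the larger Segre product to the one on the smaller by a specialization argument, leveraging Theorem \ref{thm: specialization} together with the standard principle that for a smooth projective variety $X \subset \PP(V)$, whenever a tensor $t$ admits only finitely many critical points of the squared-distance function $\delta_F(t, \cdot)$ on $X$, their number equals $\mathrm{EDdegree}_F(X)$. This principle is a consequence of the fact that the ED degree is the degree of a zero-dimensional critical scheme, so a specialization of $t$ that keeps this scheme zero-dimensional preserves the count.

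First, I would fix a tensor $t \in V \otimes \C^{N+1}$ with $\mathrm{Det}(t) \neq 0$ that is moreover generic \emph{within} this boundary-format subspace. For such $t$, the critical points on the smaller Segre product $\PP^{n_1} \times \cdots \times \PP^{n_d} \times \PP^{N}$ are finite, simple, and equal in number to the right-hand side of \eqref{eq: stabED}.

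Next, I would view the same $t$ as an element of $V \otimes W$ via the natural inclusion $\C^{N+1} \hookrightarrow W$ induced by $m \geq N$. By Theorem \ref{thm: specialization}, every critical point of $t$ on the larger Segre product $\PP^{n_1} \times \cdots \times \PP^{n_d} \times \PP^{m}$ is forced to lie in the subvariety $\PP^{n_1} \times \cdots \times \PP^{n_d} \times \PP^{N}$. Moreover, on this subvariety, the critical-point equations \eqref{eq: system singular vector tuple, in coordinates} for $t$ viewed in $V \otimes W$ reduce exactly to the critical-point equations for $t$ viewed in $V \otimes \C^{N+1}$. Consequently, the two sets of critical points are literally the same, and in particular the count on the larger Segre product is finite and equal to $\mathrm{EDdegree}_F(\PP^{n_1} \times \cdots \times \PP^{n_d} \times \PP^{N})$.

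Invoking the general principle above now yields the reverse identification: this finite common count must also equal $\mathrm{EDdegree}_F$ of the larger Segre product, completing the proof of \eqref{eq: stabED}. The main obstacle I anticipate is the subtle point that the specialization could in principle cause critical points to collide or become non-isolated when passing from the smaller to the larger ambient space, which would break the equality of counts; this is precisely what the genericity of $t$ within $V \otimes \C^{N+1}$ and the condition $\mathrm{Det}(t) \neq 0$ are meant to rule out, since they guarantee that the critical scheme of $t$ is reduced and zero-dimensional, a property preserved under the inclusion $V \otimes \C^{N+1} \hookrightarrow V \otimes W$ thanks to Theorem \ref{thm: specialization}.
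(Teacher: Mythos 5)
Your proposal is correct and follows essentially the same route as the paper: specialize $t$ to a generic tensor in $V\otimes\C^{N+1}$ with $\mathrm{Det}(t)\neq 0$, apply Theorem \ref{thm: specialization} to confine the critical points on the larger Segre product to the smaller one, and invoke the principle that a finite critical-point count computes the ED degree. The extra discussion of why the counts cannot drop under the inclusion is a sensible elaboration of the same argument the paper leaves implicit.
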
 

Theorem \ref{thm: specialization} generalizes to the partially symmetric case. The proof is analogous, following the critical space in the partially symmetric case, as defined in \cite{DOT}.

\begin{theorem}\label{thm: specialization-partial}
Keep the notation from Theorem \ref{thm: descriptionofcriticalpoints}. Let $\omega_i\in\Z_{\ge 0}$ for all $i\in[d]$. Let $\mathrm{Det}$ be the hyperdeterminant in the boundary format space
  $S^{\omega_1}V_1\otimes \cdots \otimes S^{\omega_{d}}V_{d}\otimes\C^{N+1}$.
  Consider a tensor $t\in S^{\omega_1}V_1\otimes \cdots \otimes S^{\omega_{d}}V_{d}\otimes\C^{N+1}\subset
  S^{\omega_1}V_1\otimes \cdots \otimes S^{\omega_d}V_{d}\otimes\C^{m+1}$ with $\mathrm{Det}(t)\neq 0$. Then the critical points of $t$ in $v_{\omega_1}\PP(V_1)\times\cdots\times v_{\omega_{d}}\PP(V_d)\times\PP(\C^{m+1})$ lie in the subvariety $v_{\omega_1}\PP(V_1)\times\cdots\times
  v_{\omega_{d}}\PP(V_d)\times\PP(\C^{N+1})$.
\end{theorem}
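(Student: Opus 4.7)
The plan is to run the proof of Theorem \ref{thm: specialization} verbatim, replacing the Segre critical equations with their Segre-Veronese analogues from \cite{DOT}. A critical point of $t$ on $v_{\omega_1}\PP(V_1)\times\cdots\times v_{\omega_d}\PP(V_d)\times\PP(\C^{m+1})$ has the form $v_{\omega_1}(x^{(1)})\otimes\cdots\otimes v_{\omega_d}(x^{(d)})\otimes x^{(d+1)}$, and its equations at the last (unsymmetric) factor read
\[
\sum_{\alpha_1,\ldots,\alpha_d} t_{\alpha_1\cdots\alpha_d,k}\,(x^{(1)})^{\alpha_1}\cdots(x^{(d)})^{\alpha_d}=\sigma\,x^{(d+1)}_k\quad\forall\,k\in[m+1]\,,
\]
where each $\alpha_i$ ranges over multi-indices on $[n_i+1]$ of weight $\omega_i$ and $(x^{(i)})^{\alpha_i}$ denotes the corresponding monomial.

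As in the proof of Theorem \ref{thm: specialization}, I fix a coordinate index $k_0\in\{N+2,\ldots,m+1\}$ and eliminate $\sigma$ between coordinates $k$ and $k_0$, obtaining the partially symmetric analogue of \eqref{eq: equations singular space}. The non-conciseness hypothesis $t\in S^{\omega_1}V_1\otimes\cdots\otimes S^{\omega_d}V_d\otimes\C^{N+1}$ forces $t_{\alpha_1\cdots\alpha_d,k_0}=0$ for every multi-index, so the cross relation collapses to
\[
\Bigl[\sum_{\alpha_1,\ldots,\alpha_d} t_{\alpha_1\cdots\alpha_d,k}\,(x^{(1)})^{\alpha_1}\cdots(x^{(d)})^{\alpha_d}\Bigr]\,x^{(d+1)}_{k_0}=0\quad\forall\,k\in[m+1]\,,
\]
in exact parallel with the Segre case. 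If $x^{(d+1)}_{k_0}\neq 0$, every bracket must vanish, which says that the decomposable element $v_{\omega_1}(x^{(1)})\otimes\cdots\otimes v_{\omega_d}(x^{(d)})$ lies in the kernel of the $\C^{N+1}$-flattening of $t$.

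The main obstacle is to invoke the Segre-Veronese analogue of \cite[Chapter 14, Theorem 3.1]{GKZ} asserting that, in the boundary format $\dim(S^{\omega_1}V_1)\times\cdots\times\dim(S^{\omega_d}V_d)\times(N+1)$, the hyperdeterminant vanishes on $t$ precisely when such a non-zero decomposable element exists in the kernel of the $\C^{N+1}$-flattening. This is the partially symmetric version of the projective-dual characterization, and once cited — either directly from \cite{GKZ} or via the sub-boundary format results of \cite[\S 3.5]{DOT} — the contradiction with $\mathrm{Det}(t)\neq 0$ forces $x^{(d+1)}_{k_0}=0$. Iterating over all $k_0\in\{N+2,\ldots,m+1\}$, the critical point lies in $v_{\omega_1}\PP(V_1)\times\cdots\times v_{\omega_d}\PP(V_d)\times\PP(\C^{N+1})$, as claimed.
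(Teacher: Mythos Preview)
Your proposal is correct and follows exactly the route the paper indicates: the paper does not spell out a proof of Theorem~\ref{thm: specialization-partial} but simply states that it is analogous to Theorem~\ref{thm: specialization}, using the partially symmetric critical space from \cite{DOT}, and you have written out precisely that analogous argument. The one step you flag as an ``obstacle'' --- that $\mathrm{Det}(t)\neq 0$ rules out nonzero Segre--Veronese decomposable elements in the kernel of the last flattening --- is the direct partially symmetric counterpart of the boundary-format characterization \cite[Chapter~14, Theorem~3.1]{GKZ} invoked in the proof of Theorem~\ref{thm: specialization}, so no new ingredient is required.
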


\begin{corollary}\label{cor: stabED-partial}
Let $N = \sum_{i=1}^{d}n_i$ and $\omega_i\in\Z_{\ge 0}$ for all $i\in[d]$. For all $m\geq N$, we have
\[
\mathrm{EDdegree}_F\left(v_{\omega_1}\PP^{n_1}\times\cdots\times v_{\omega_{d}}\PP^{n_{d}}\times\PP^{m}\right) = \mathrm{EDdegree}_F\left(v_{\omega_1}\PP^{n_1}\times\cdots\times v_{\omega_{d}}\PP^{n_{d}}\times\PP^{N}\right)\,.
\]
\end{corollary}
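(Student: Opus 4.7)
The strategy mirrors that of Corollary \ref{cor: stabED}, with Theorem \ref{thm: specialization-partial} playing the role of Theorem \ref{thm: specialization}. I would begin by recalling the basic principle invoked there: for a projective variety $X\subset\PP(V)$ and any $t\in V$ whose critical locus on $X\setminus X_{\mathrm{sing}}$ happens to be zero-dimensional, the length of that locus equals $\mathrm{EDdegree}_F(X)$. Consequently the ED degree can be evaluated on any convenient specialization of $t$, provided the critical count remains finite.

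To compute the left-hand side, I would specialize $t$ to lie in the proper subspace $S^{\omega_1}V_1\otimes\cdots\otimes S^{\omega_d}V_d\otimes\C^{N+1}$ of the ambient $S^{\omega_1}V_1\otimes\cdots\otimes S^{\omega_d}V_d\otimes\C^{m+1}$, chosen so that $\mathrm{Det}(t)\neq 0$. The non-vanishing locus of $\mathrm{Det}$ is Zariski-dense in the boundary-format subspace since the boundary-format hyperdeterminant is a non-zero polynomial. Theorem \ref{thm: specialization-partial} then forces every critical point of this $t$ on $v_{\omega_1}\PP^{n_1}\times\cdots\times v_{\omega_d}\PP^{n_d}\times\PP^m$ to lie already in the subvariety $v_{\omega_1}\PP^{n_1}\times\cdots\times v_{\omega_d}\PP^{n_d}\times\PP^N$.

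The reverse inclusion is essentially formal once one unwinds the Frobenius orthogonality conditions: at a point $v$ of the smaller Segre-Veronese, the tangent directions in the larger ambient space that are not already tangent to the smaller one all factor through the quotient $\C^{m+1}/\C^{N+1}$, hence lie in the orthogonal complement of $S^{\omega_1}V_1\otimes\cdots\otimes S^{\omega_d}V_d\otimes\C^{N+1}$, which contains $t-v$. Therefore $v$ is a critical point of $t$ on the larger Segre-Veronese if and only if it is one on the smaller. For generic $t$ inside the boundary-format subspace this common critical locus has cardinality $\mathrm{EDdegree}_F(v_{\omega_1}\PP^{n_1}\times\cdots\times v_{\omega_d}\PP^{n_d}\times\PP^N)$, and by the specialization principle this number also equals $\mathrm{EDdegree}_F(v_{\omega_1}\PP^{n_1}\times\cdots\times v_{\omega_d}\PP^{n_d}\times\PP^m)$, yielding the claimed identity.

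The one step that requires care is verifying that the critical locus of the specialized $t$ on the larger variety is genuinely zero-dimensional, so that the specialization principle applies. This is controlled by the non-vanishing of $\mathrm{Det}(t)$ together with upper semicontinuity of the fiber dimension of the critical correspondence, and the analysis has already been carried out in effect inside the proofs of Theorems \ref{thm: descriptionofcriticalpoints} and \ref{thm: specialization-partial}; the partially symmetric case only requires the bookkeeping adaptation already built into the proof of Theorem \ref{thm: specialization-partial}.
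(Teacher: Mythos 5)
Your proposal is correct and follows essentially the same route as the paper: the paper proves this corollary exactly as it proves Corollary \ref{cor: stabED}, namely by specializing $t$ into the boundary-format subspace with $\mathrm{Det}(t)\neq 0$, invoking Theorem \ref{thm: specialization-partial} to confine the critical points to the smaller Segre--Veronese, and identifying the two critical loci. Your additional remarks on the reverse inclusion via Frobenius orthogonality and on the finiteness of the critical locus are sound and simply make explicit what the paper leaves implicit.
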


\begin{remark}
Corollary \ref{cor: stabED-partial} does not hold if the last factor $\PP^m$ is replaced by $v_s\PP^m$ for some integer $s\ge 2$.
This can be checked also from the formula in \cite[Theorem 12]{FO} (generalizing Theorem \ref{thm: FO formula} to the case of partially symmetric tensors) which does not stabilize anymore for $m\to +\infty$.
\end{remark}

\begin{conjecture}\label{conj: stabthm}
Let $X=\PP(V_1)\times\cdots\times\PP(V_{d})\subset\PP(V)$ and let $W$ be an $(m+1)$-dimensional complex vector space $W$, where $m\ge 0$. Then
\begin{equation}\label{eq: formula EDdegree sum}
\mathrm{EDdegree}_F(X\times\PP(W))=\sum_{j=0}^{m}\mathrm{EDdegree}_F(X\cap H_j)\,, 
\end{equation}
where $H_j\subset\PP(V)$ is a general subspace of codimension $j$.
\end{conjecture}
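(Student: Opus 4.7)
The plan is to proceed by induction on $m$. The base case $m=0$ is immediate: the right-hand side consists only of $\mathrm{EDdegree}_F(X \cap H_0) = \mathrm{EDdegree}_F(X)$, which equals $\mathrm{EDdegree}_F(X\times\PP^0)$. Granting the formula for $m-1$, it suffices to establish the increment
\[
\mathrm{EDdegree}_F(X\times\PP^{m}) - \mathrm{EDdegree}_F(X\times\PP^{m-1}) = \mathrm{EDdegree}_F(X \cap H_m),
\]
where $H_m \subset \PP(V)$ is a general codimension-$m$ linear subspace.

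For the inductive step I would use a specialization-plus-deformation argument in the spirit of Theorem \ref{thm: specialization} and Corollary \ref{cor: stabED}. Fix a direct-sum decomposition $W = L \oplus \C e$ with $\dim L = m$, pick a generic $t_0 \in V \otimes L$, and view it as an element of $V \otimes W$. By Lemma \ref{lem: singularvaluezero}, the critical points of $t_0$ on $X \times \PP(W)$ split into: (i) those with $z \in L$, which are precisely the critical points of $t_0$ as a tensor in $V \otimes L$ --- contributing $\mathrm{EDdegree}_F(X \times \PP^{m-1})$ by genericity --- and (ii) those with singular value $\sigma = 0$, i.e., decomposable points of $K(t_0) \cap (X \times \PP(W))$. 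To capture the contribution of (ii), I would deform to $t_s = t_0 + s \cdot u \otimes e$ with $u \in V$ generic, and trace the $\mathrm{EDdegree}_F(X\times\PP^m)$ critical points of $t_s$ as $s \to 0$. Those with $\sigma_s$ bounded away from $0$ converge to type (i), while the remaining $\mathrm{EDdegree}_F(X\times\PP^m) - \mathrm{EDdegree}_F(X\times\PP^{m-1})$ critical points have $\sigma_s \to 0$. Expanding the critical equations \eqref{eq: system singular vector tuple, in coordinates} for $t_s$ to first order in $s$ around a limit point $x_0 \otimes z_0$ with $z_0 \in L^\perp$, the expectation is that $x_0 \in X \cap \PP(\mathrm{Ker}(\pi_L))$, where $\pi_L\colon V \to L$ is the flattening of $t_0$; by the genericity of $t_0$, this intersection is a general codimension-$m$ linear section of $X$. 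The linearized equations should then impose on $x_0$ the critical-point condition for the distance-from-$u$ function on $X \cap \PP(\mathrm{Ker}(\pi_L))$, producing exactly $\mathrm{EDdegree}_F(X \cap H_m)$ such limit points.

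The main obstacle is making this $s \to 0$ limit analysis rigorous. For $m < \dim X$ the locus $X \cap \PP(\mathrm{Ker}(\pi_L))$ is positive-dimensional, so the $\sigma = 0$ critical points of $t_0$ are non-isolated; one must verify that the perturbation $u \otimes e$ selects precisely $\mathrm{EDdegree}_F(X \cap H_m)$ isolated limit points each with multiplicity one, and that no critical points of $t_s$ escape to infinity or concentrate on ancillary components of $K(t_0)$. A potentially cleaner route that circumvents the deformation analysis would be a cohomological proof: express both sides via the polar-class formula for $\mathrm{EDdegree}_F$ (with appropriate corrections from the isotropic quadric), use the K\"unneth splitting of the Chern class of the tangent bundle of $X \times \PP^m$ together with a standard restriction formula for polar classes under generic linear sections, and reduce the conjecture to a combinatorial identity in the polar degrees of $X$.
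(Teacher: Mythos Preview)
The statement you are attempting to prove is labeled a \emph{Conjecture} in the paper, and the paper offers no proof of it. The authors only support it with numerical evidence (the tables immediately following the conjecture, where the identity is checked for $X=\PP^1\times\PP^1$, $\PP^1\times\PP^2$, $\PP^2\times\PP^2$, $\PP^2\times\PP^3$ and small $m$). So there is no ``paper's own proof'' to compare against; you are proposing an attack on an open problem.

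As for the proposal itself: the inductive reduction to the increment formula is sound, and the idea of specializing $t$ into $V\otimes L$ and tracking the two families of critical points is exactly the mechanism behind Theorem~\ref{thm: specialization} and Theorem~\ref{thm: descriptionofcriticalpoints}. But you have correctly identified the real gap yourself. When $m<\dim X$, the zero-singular-value locus $K(t_0)\cap(X\times\PP(W))$ is positive-dimensional (see the table of $(\dim\mathcal{C}_{L^\perp},\deg\mathcal{C}_{L^\perp})$ in the paper), so the critical scheme of $t_0$ is non-reduced along that component and the specialization argument of Corollary~\ref{cor: stabED} no longer applies directly: one cannot simply count points. Your deformation $t_s=t_0+s\,u\otimes e$ is the natural thing to try, but the claim that the linearized equations at a limit point $x_0\otimes z_0$ with $z_0\in L^\perp$ reduce to the critical-point equations for the distance from $u$ on $X\cap\PP(\mathrm{Ker}\,\pi_L)$ is asserted, not shown; this is precisely the heart of the matter, and it requires a careful local analysis (e.g.\ an excess-intersection or dynamic-intersection computation) to confirm both that the limiting scheme is reduced and that no branches escape. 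Until that step is carried out, the argument remains a heuristic rather than a proof. The alternative cohomological route you mention is also plausible in spirit, but note that the Frobenius ED degree is \emph{not} given by the clean polar-class sum $\sum_i\delta_i$ (that formula computes the generic ED degree); the correction terms coming from non-transversality with the isotropic quadric are exactly what make this conjecture non-trivial.
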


\begin{example}
As an illustration of Conjecture \ref{conj: stabthm}, we consider the Segre products $X\times\PP^m$ for $m\ge 0$ and for some choices of $X$. The values of $\mathrm{EDdegree}_F(X\times\PP^m)$ are listed in Table \ref{table: Frob ED degrees X times PP^m} for the different varieties $X$. The entries of the $i$-th column in Table \ref{table: Frob ED degrees X cap H_j} correspond to the values of $\mathrm{EDdegree}_F(X\cap H_{i-1})$. The numbers in Table \ref{table: Frob ED degrees X times PP^m} as well as in the first column of Table \ref{table: Frob ED degrees X cap H_j} are computed according to Theorem \ref{thm: FO formula}. The boxed ED degrees in Table \ref{table: Frob ED degrees X cap H_j} have been checked numerically with the software \texttt{Julia} \cite{BT}, and the remaining ones with the software \texttt{Macaulay2} \cite{GS}.

Observe that each number in the $i$-th column of Table \ref{table: Frob ED degrees X times PP^m} is the sum of the first $i$ entries in the corresponding row of Table \ref{table: Frob ED degrees X cap H_j}, thus confirming Conjecture \ref{conj: stabthm}.
\begingroup
\setlength{\tabcolsep}{3pt}
\renewcommand{\arraystretch}{1.15}
\begin{table}[htbp]
\centering
\begin{tabular}{|c||c|c|c|c|c|c|}
\hline
$X$ & $X\times\PP^0$ & $X\times\PP^1$ & $X\times\PP^2$ & $X\times\PP^3$ & $X\times\PP^4$ & $X\times\PP^5$\\
\hhline{|=||======|}
$\PP^1\times\PP^1$&$2$&$6$&$8$&$8$&$8$&$8$\\
$\PP^1\times\PP^2$&$2$&$8$&$15$&$18$&$18$&$18$\\
$\PP^2\times\PP^2$&$3$&$15$&$37$&$55$&$61$&$61$\\
$\PP^2\times\PP^3$&$3$&$18$&$55$&$104$&$138$&$148$\\
\hline
\end{tabular}
\vspace*{1mm}
\caption{Values of $\mathrm{EDdegree}_F(X\times\PP^m)$ for different choices of $m$ and $X$.}\label{table: Frob ED degrees X times PP^m}
\end{table}

\begin{table}[htbp]
	\centering
	\begin{tabular}{|c||c|c|c|c|c|c|}
		\hline
		$X$ & $X\cap H_0$ & $X\cap H_1$ & $X\cap H_2$ & $X\cap H_3$ & $X\cap H_4$ & $X\cap H_5$\\
		\hhline{|=||======|}
		$\PP^1\times\PP^1$&$2$&$4$&$2$&$0$&$0$&$0$\\
		$\PP^1\times\PP^2$&$2$&$6$&$7$&$3$&$0$&$0$\\
		$\PP^2\times\PP^2$&$3$&$\boxed{12}$&$\boxed{22}$&$\boxed{18}$&$\boxed{6}$&$0$\\
		$\PP^2\times\PP^3$&$3$&$\boxed{15}$&$\boxed{37}$&$\boxed{49}$&$\boxed{34}$&$\boxed{10}$\\
		\hline
	\end{tabular}
	\vspace*{1mm}
	\caption{Values of $\mathrm{EDdegree}_F(X\cap H_j)$ for different choices of $X$ and $j$.}\label{table: Frob ED degrees X cap H_j}
\end{table}
\endgroup
\end{example}

\begin{remark}
Using the notations of Theorem \ref{thm: descriptionofcriticalpoints}, we assume $d=2$. Let $t\in V\otimes L$, where $L\subset W$ is a linear subspace. The critical points of $t$ generally fill up several components of different dimensions, forming the {\it critical locus}. These components are either in $V\otimes L$ or in $V\otimes L^\perp$. By the description of the critical points in Theorem \ref{thm: descriptionofcriticalpoints}, the critical locus of a general $t$ sitting inside $V\otimes L^\perp$ coincides with the {\it contact locus} of $t$, see \cite[\S 3]{Ott} (these last two observations apply to all formats with any number of factors). For a general tensor $t\in V\otimes L$, define $\mathcal{C}_L$ and $\mathcal{C}_{L^\perp}$ to be the critical loci inside $V\otimes L$ and $V \otimes L^\perp$, respectively. In Table \ref{tab: dim deg critical loci} we collect the dimensions and the degrees of these loci for the first few cases of boundary formats and where the linear subspace $L$ is varying.

\begingroup
\setlength{\tabcolsep}{3pt}
\renewcommand{\arraystretch}{1.15}
\begin{table}
	\centering
	\begin{tabular}{|c|c|c|c|}
		\hline
		$(n_1,n_2,m)$ & $\dim(L)$ & $(\dim(\mathcal{C}_L),\deg(\mathcal{C}_L))$ & $(\dim(\mathcal{C}_{L^\perp}), \deg(\mathcal{C}_{L^\perp}))$\\
		\hhline{|====|}
		(1,1,2) & 3 & (0,8) & $\mathcal{C}_{L^\perp}=\emptyset$ \\
		\blue{(1,1,2)} & \blue{2} & \blue{(0,6)} & \blue{(0,2)} \\
		(1,1,2) & 1 & (0,2) & (2,2) \\
		\hline
		(1,2,3) & 4 & (0,18) & $\mathcal{C}_{L^\perp}=\emptyset$ \\
		\blue{(2,3,4)} & \blue{3} & \blue{(0,15)} & \blue{(0,3)} \\
		(1,2,3) & 2 & (0,8) & (2,3) \\
		(1,2,3) & 1 & (0,2) & (4,2) \\
		\hline
		(2,2,4) & 5 & (0,61) & $\mathcal{C}_{L^\perp}=\emptyset$ \\
		\blue{(3,3,5)} & \blue{4} & \blue{(0,55)} & \blue{(0,6)} \\
		(2,2,4) & 3 & (0,37) & (2,6) \\
		(2,2,4) & 2 & (0,15) & (4,4) \\
		(2,2,4) & 1 & (0,3) & (6,2) \\
		\hline
		(2,3,5) & 6 & (0,148) & $\mathcal{C}_{L^\perp}=\emptyset$ \\
		\blue{(3,4,6)} & \blue{5} & \blue{(0,138)} & \blue{(0,10)} \\
		(2,3,5) & 4 & (0,104) & (2,10) \\
		(2,3,5) & 3 & (0,55) & (4,7) \\
		(2,3,5) & 2 & (0,18) & (6,4) \\
		(2,3,5) & 1 & (0,3) & (8,2) \\
		\hline
	\end{tabular}
	\vspace*{1mm}
	\caption{Dimensions and degrees of the critical loci $\mathcal{C}_L$ and $\mathcal{C}_{L^\perp}$ for boundary formats. The blue dimension is when $L$ is a hyperplane.}\label{tab: dim deg critical loci}
\end{table}
\endgroup

\end{remark}

\begin{remark}\label{rmk: gen EDdegree Segre}
One might also compute the ED degree of a Segre-Veronese product of projective spaces $X=\omega_1\PP^{n_1}\times\cdots\times\omega_d\PP^{n_d}$, with respect to a metric that makes $X$ transversal to the isotropic quadric (i.e., the {\it generic ED degree} of $X$). The next formula is obtained by a Chern class computation made by the second author \cite{Sod20} and applying \cite[Theorem 5.8]{DHOST} (here $N=\dim(X)=n_1+\cdots+n_d$):
\begin{equation}\label{eq: gen EDdegree Segre}
\mathrm{EDdegree}_{\mathrm{gen}}(X)=\sum_{j=0}^N(-1)^j(2^{N+1-j}-1)(N-j)!\left[\sum_{i_1+\cdots+i_d=j}\prod_{l=1}^d\frac{\binom{n_l+1}{i_l}\omega_l^{n_l-i_l}}{(n_l-i_l)!}\right]\,.
\end{equation}
For example, if $d=2$, $n_1=\omega_1=\omega_2=1$ and $n_2=n$, then the identity \eqref{eq: gen EDdegree Segre} simplifies to
\begin{align*}
\mathrm{EDdegree}_{\mathrm{gen}}(\PP^1\times\PP^n) &=\sum_{i=0}^{n+1}(-1)^i(2^{n+2-i}-1)(n+1-i)!\left[\frac{\binom{n+1}{i}}{(n-i)!}+\frac{2\binom{n+1}{i-1}}{(n+1-i)!}\right]\\
&=\overbrace{\sum_{i=0}^{n+1}(-1)^i\,2^{n+2-i}(n+1-i)\binom{n+1}{i}}^A-\overbrace{\sum_{i=0}^{n+1}(-1)^i(n+1-i)\binom{n+1}{i}}^B\\
&\quad+2\overbrace{\sum_{i=0}^{n+1}(-1)^i\,2^{n+2-i}\binom{n+1}{i-1}}^C-2\overbrace{\sum_{i=0}^{n+1}(-1)^i\binom{n+1}{i-1}}^D\,,
\end{align*}
where one might check easily that
\[
A = 4(n+1)\,,\quad B = 0\,,\quad C = (-1)^{n+1}-1\,,\quad D = (-1)^{n+1}\,.
\]
Therefore
\[
\mathrm{EDdegree}_{\mathrm{gen}}(\PP^1\times\PP^n) = A-B+2C-2D = 4(n+1)-2 = 4n+2\,.
\]
In particular $\mathrm{EDdegree}_{\mathrm{gen}}(\PP^1\times\PP^n)$ diverges when $n\to+\infty$, in contrast with $\mathrm{EDdegree}_F(\PP^1\times\PP^n)=2$ for all $n\ge 1$.
\end{remark}

\section{Stabilization of the degree of the dual of a special Segre product}\label{sec: stabilization degree dual of XxQ}

In this section, we start introducing classical material on dual varieties. We refer to \cite{GKZ} for details on the rich theory of projective duality. 
We shall demonstrate Theorem \ref{thm: stabilization XxQ}, showing a stabilization property of dual varieties to some Segre products. 

\begin{definition}[{\bf Segre products}]\label{def: segprod}
Let $X_1\subset \PP(V_1)$ and $X_2\subset \PP(V_2)$ be two projective varieties. Their direct product $X_1\times X_2$ may be embedded in $\PP(V_1\otimes V_2)$ via the Segre embedding introduced in Definition \ref{def: Segre embedding}. The image of this embedding is called the {\it Segre product} of $X_1$ and $X_2$ and it is again denoted by $X_1\times X_2$. 
\end{definition}

\begin{definition}\label{def: conormal variety}
Let $Y\subset\PP(V)\cong\PP^n$ be an irreducible projective variety of dimension $m$, where $V$ is a Euclidean space. The Euclidean structure of $V$ allows us to naturally identify $V$ with its dual $V^*$. The {\it conormal variety} of $Y$ is the incidence correspondence
\[
\mathcal{N}_Y\coloneqq\overline{\{(z_1,z_2)\in V\times V\mid z_1\in Y_\mathrm{sm}\ \mbox{and}\ z_2\in N_{z_1}Y\}}\,,
\]
where $N_{z_1}Y$ denotes the {\it normal space} of $Y$ at the smooth point $z_1$.
\end{definition}

A fundamental feature of the conormal variety is the content of the {\it biduality theorem} \cite[Chapter 1]{GKZ}: one has $\mathcal{N}_Y=\mathcal{N}_{Y^\vee}$. The latter implies $(Y^\vee)^\vee=Y$, the so-called {\it biduality}.  

The {\it polar classes} of $Y$ are defined to be the coefficients $\delta_i(Y)$ of the class in cohomology
\[
[\mathcal{N}_{Y}]=\delta_0(Y)s^{n}t+\delta_1(Y)s^{n-1}t^2+\cdots+\delta_{n-1}(Y)st^{n}\in A^*(\PP(V)\times\PP(V))\cong\frac{\Z[s,t]}{(s^{n+1},t^{n+1})}\,,
\]
where $s=\pi_1^*([H])$, $t=\pi_2^*([H'])$, the maps $\pi_1,\pi_2$ are the projections onto the factors of $\PP(V)\times\PP(V)$ and $H, H'$ are hyperplanes in $\mathbb{P}(V)$.
If we assume $Y$ smooth, $\delta_i(Y)$ may be computed utilizing the Chern classes of $Y$.
These are the Chern classes of the tangent bundle $\mathcal{T}_Y$ of $Y$. One computes \cite[\S 3]{Hol}:

\begin{equation}\label{eq: Holme}
\delta_i(Y)=\sum_{j=0}^{m-i}(-1)^{j}\binom{m+1-j}{i+1}c_{j}(Y)\cdot h^{m-j} = \sum_{j=0}^{m-i}(-1)^{j}\binom{m+1-j}{i+1}\deg(c_{j}(Y))\,. 
\end{equation}
The right-hand side of (\ref{eq: Pie78}) is always a nonnegative integer. The integer $\mathrm{codim}(Y^\vee)-1$ equals the minimum $i$ such that $\delta_i(Y)\neq 0$. Whenever $Y^\vee$ is a hypersurface, one has
\begin{equation}\label{eq: degdual}
\mathrm{deg}(Y^\vee)=\delta_0(Y)=\sum_{j=0}^m(-1)^{j}(m+1-j)c_j(Y)\cdot h^{m-j}=\sum_{j=0}^m(-1)^{j}(m+1-j)\deg(c_j(Y))\,.
\end{equation}

When $Y$ is not smooth, we can replace Chern classes with {\it Chern-Mather classes}. They are constructed as follows. Let $Y\subset\PP(V)\cong\PP^n$ be a projective variety of dimension $m$. We denote by $G(m+1,V)$ the Grassmannian of $(m+1)$-dimensional vector subspaces of $V$. Consider the {\it Gauss map}
\[
\begin{matrix}
\gamma_Y\colon & Y & \dasharrow & G(m+1,V)\\
& y &  \longmapsto & T_yY
\end{matrix}
\]
which is defined over the smooth points of $Y$. The {\it Nash blow-up} of $Y$ is the closure $\widetilde{Y}$ of the image of $\gamma_Y$. It comes equipped with a proper map $\nu\colon\widetilde{Y}\to Y$. Now let $\mathcal{U}\to G(m+1,V)$ be the {\it universal bundle} over $G(m+1,V)$ of rank $m+1$, where $\mathcal{U}\coloneqq\{(v,W)\in V\times G(m+1,V)\mid v\in W\}$. The vector bundle $\mathcal{U}$ gives a short exact sequence
\[
0\to\mathcal{U}\to\mathcal{O}_{\PP(V)}\otimes V\to\mathcal{Q}\to 0\,,
\]
where $\mathcal{Q}$ denotes the quotient bundle. From this it follows that $\mathcal{Q}\otimes\mathcal{U}^\vee$ is isomorphic to the tangent bundle $\mathcal{T}_{G(m+1,V)}$. The push-forwards under $\nu$ of the Chern classes of the universal bundle restricted to the Nash blow-up $\widetilde{Y}$ are
the {\it Chern-Mather classes} $c_i^\mathsmaller{M}(Y)$ of $Y$. They agree with Chern classes whenever $Y$ is smooth.

The polar classes $\delta_i(Y)$ may be written in terms of the Chern-Mather classes $c_i^\mathsmaller{M}(Y)$, thus generalizing the classical formula in (\ref{eq: Holme}). This generalization is due to Piene (\cite[Theorem 3]{Pie88} and \cite{Pie78}), see also \cite[Proposition 3.13]{Alu}:
\begin{equation}\label{eq: Pie78}
\delta_i(Y)=\sum_{j=0}^{m-i}(-1)^j\binom{m+1-j}{i+1}c_j^\mathsmaller
{M}(Y)\cdot h^{m-j} = \sum_{j=0}^{m-i}(-1)^{j}\binom{m+1-j}{i+1}\deg(c_j^\mathsmaller
{M}(Y))\,.
\end{equation}
In equation (\ref{eq: Pie78}) we use a slightly different convention than in \cite{Alu}. Indeed, for us $c_i^\mathsmaller{M}(Y)$ is the component of dimension $m-i$ (as with standard Chern classes), while in Aluffi's paper it is the component of dimension $i$. We have also the following generalization of equation (\ref{eq: degdual}), when $Y^\vee$  is a hypersurface:
\begin{equation}\label{eq: degdual Mather}
\mathrm{deg}(Y^\vee)=\delta_0(Y)=\sum_{j=0}^m(-1)^j\left(m+1-j\right)c_j^\mathsmaller{M}(Y)\cdot h^{m-j} =\sum_{j=0}^m(-1)^j\left(m+1-j\right)\deg(c_j^\mathsmaller{M}(Y))\,.
\end{equation}
 
\begin{lemma}\label{lem: expressionalphai}
Let $X$ be a projective variety of dimension $m$. For every integer $n\ge 0$, let $Y_n\subset\PP^{n+1}$ be a smooth hypersurface of degree $d$. Then
\[
\delta_0(X\times Y_n)=\sum_{i=0}^m \alpha_i\deg(c_i^\mathsmaller{M}(X))\,, 
\]
where for all $i\in\{0,\ldots,m\}$
\begin{equation}\label{eq: def alphai}
\alpha_i(n,m,d) = \sum_{s=i}^{n+m}(-1)^s(m+n+1-s)\left[\sum_{k=0}^{s-i}\binom{n+2}{k}(-d)^{s-i-k}\right]\binom{m+n-s}{n-s+i}\,.
\end{equation}
\begin{proof}
Write $c^\mathsmaller{M}(X)=\sum_{i=0}^mc_i^\mathsmaller{M}(X)x^i$ and $c^\mathsmaller{M}(Y_n)=c(Y_n)=\frac{(1+y)^{n+2}}{1+d\,y}$ for the Chern-Mather polynomials of $X$ and $Y_n$, respectively. The expression for $c(Y_n)$ is derived from the short exact sequence of sheaves
\[
0\rightarrow \mathcal{T}_{Y_n}\rightarrow \mathcal{T}_{\mathbb P^{n+1}|Y_n}\rightarrow N_{Y_n/\mathbb P^{n+1}}\rightarrow 0\,,
\]
and applying Whitney formula. Keeping into account the relations $x^{m+1}=0=y^{n+1}$, we have
\begin{align*}
c^\mathsmaller{M}(X\times Y_n)&=\left[\sum_{i=0}^mc_i^\mathsmaller{M}(X)x^i\right]\frac{(1+y)^{n+2}}{1+d\,y}\\
&=\left[\sum_{i=0}^mc_i^\mathsmaller{M}(X)x^i\right]\left\{\sum_{j=0}^n\left[\sum_{k=0}^j\binom{n+2}{k}(-d)^{j-k}\right]y^j\right\}\\
&=\sum_{i=0}^m\sum_{j=0}^n\left[\sum_{k=0}^j\binom{n+2}{k}(-d)^{j-k}\right]c_i^\mathsmaller{M}(X)x^iy^j\\
&=\sum_{s=0}^m\left\{\sum_{j=0}^s\left[\sum_{k=0}^j\binom{n+2}{k}(-d)^{j-k}\right]c_{s-j}^\mathsmaller{M}(X)x^{s-j}y^j\right\}\\
&\quad+\sum_{s=m+1}^n\left\{\sum_{j=0}^m\left[\sum_{k=0}^{s-m+j}\binom{n+2}{k}(-d)^{s-m+j-k}\right]c_{m-j}^\mathsmaller{M}(X)x^{m-j}y^{s-m+j}\right\}\\
&\quad+\sum_{s=n+1}^{n+m}\left\{\sum_{j=0}^{n+m-s}\left[\sum_{k=0}^{s-m+j}\binom{n+2}{k}(-d)^{s-m+j-k}\right]c_{m-j}^\mathsmaller{M}(X)x^{m-j}y^{s-m+j}\right\}\\
&=\sum_{s=0}^{n+m}p_s(X)\,,
\end{align*}
where $p_s(X)$ is a homogeneous polynomial of degree $s$ in the variables $x$ and $y$. Using (\ref{eq: degdual Mather}), the polar class $\delta_0(X\times Y_n)$ is given by
\[
\delta_0(X\times Y_n)=\sum_{s=0}^{n+m}(-1)^s(m+n+1-s)p_s(X)\cdot(x+y)^{m+n-s}\,.
\]
A computation reveals that $p_s(X)\cdot(x+y)^{m+n-s}=a_s(X)\,x^my^n$, where
\[
a_s(X) =
\begin{cases}
\sum_{j=0}^s\left[\sum_{k=0}^j\binom{n+2}{k}(-d)^{j-k}\right]\binom{m+n-s}{n-j}c_{s-j}^\mathsmaller{M}(X) & \mbox{for $0\le s\le m$}\,,\\
\sum_{j=0}^m\left[\sum_{k=0}^{s-m+j}\binom{n+2}{k}(-d)^{s-m+j-k}\right]\binom{m+n-s}{m+n-s-j}c_{m-j}^\mathsmaller{M}(X) & \mbox{for $m+1\le s\le n$}\,,\\
\sum_{j=0}^{m+n-s}\left[\sum_{k=0}^{s-m+j}\binom{n+2}{k}(-d)^{s-m+j-k}\right]\binom{m+n-s}{m+n-s-j}c_{m-j}^\mathsmaller{M}(X) & \mbox{for $n+1\le s\le n+m$}\,.
\end{cases}
\]

Plugging in the relations above in the definition of $\delta_0(X\times Y_n)$ and factoring out the Chern-Mather classes $c_i^\mathsmaller{M}(X)$ we derive that
\begin{align*}
\delta_0(X\times Y_n) &=\left\{\sum_{s=0}^n(-1)^s(m+n+1-s)\left[\sum_{k=0}^s\binom{n+2}{k}(-d)^{s-k}\right]\binom{m+n-s}{n-s}\right\}\deg(c_0^\mathsmaller{M}(X))\\
&\quad+\sum_{i=1}^{m-1}\left\{\sum_{s=i}^{n+m}(-1)^s(m+n+1-s)\left[\sum_{k=0}^{s-i}\binom{n+2}{k}(-d)^{s-i-k}\right]\binom{m+n-s}{n-s+i}\right\}\deg(c_i^\mathsmaller{M}(X))\\
&\quad+\left\{\sum_{s=m}^{n+m}(-1)^s(m+n+1-s)\left[\sum_{k=0}^{s-m}\binom{n+2}{k}(-d)^{s-m-k}\right]\right\}\deg(c_m^\mathsmaller{M}(X))\,.
\end{align*}
Therefore, for all $i\in\{0,\ldots,m\}$ the coefficient of $\deg(c_i^\mathsmaller{M}(X))$ is
\[
\alpha_i(n,m,d) = \sum_{s=i}^{n+m}(-1)^s(m+n+1-s)\left[\sum_{k=0}^{s-i}\binom{n+2}{k}(-d)^{s-i-k}\right]\binom{m+n-s}{n-s+i}\,.\qedhere
\]
\end{proof}
\end{lemma}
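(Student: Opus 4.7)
The plan is to combine formula (\ref{eq: degdual Mather}) applied to $Z = X \times Y_n$ with the multiplicativity of the Chern--Mather class on products. Since $Y_n$ is smooth, its Chern--Mather class coincides with the ordinary Chern class, so
\[
c^{\mathsmaller{M}}(X \times Y_n) \;=\; c^{\mathsmaller{M}}(X) \cdot c(Y_n),
\]
viewed inside a bigraded polynomial ring in the hyperplane class $x$ pulled back from the ambient of $X$ and the hyperplane class $y$ on $\PP^{n+1}$, subject to $x^{m+1} = 0 = y^{n+1}$ on $X \times Y_n$.

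First I would derive the closed form $c(Y_n) = (1+y)^{n+2}/(1+dy)$ from the conormal bundle sequence $0 \to \mathcal{T}_{Y_n} \to \mathcal{T}_{\PP^{n+1}}|_{Y_n} \to \mathcal{O}(d)|_{Y_n} \to 0$ and the Whitney product formula, then expand the geometric series and truncate at $y^n$ to obtain
\[
c(Y_n) \;=\; \sum_{j=0}^{n} \left[\sum_{k=0}^{j}\binom{n+2}{k}(-d)^{j-k}\right] y^j.
\]

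Next I would multiply by $c^{\mathsmaller{M}}(X) = \sum_i c_i^{\mathsmaller{M}}(X)\, x^i$ to obtain the bivariate expansion $c^{\mathsmaller{M}}(X \times Y_n) = \sum_s p_s(X)$, where $p_s$ is homogeneous of degree $s$ in $(x,y)$ with coefficients in the Chern--Mather classes of $X$. Plugging into formula (\ref{eq: degdual Mather}) and using $h = x + y$ as the hyperplane class of the Segre product yields
\[
\delta_0(X \times Y_n) \;=\; \sum_{s=0}^{m+n} (-1)^s (m+n+1-s)\, p_s(X) \cdot (x+y)^{m+n-s}.
\]
Each term has total degree $m+n$, and the relations $x^{m+1} = 0 = y^{n+1}$ single out the monomial $x^m y^n$ whose coefficient is paired with the fundamental class. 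Isolating that coefficient means selecting, from each summand $c_i^{\mathsmaller{M}}(X)\, x^i y^{s-i}$ (whenever $i \le m$ and $s-i \le n$) together with $(x+y)^{m+n-s}$, the unique factor $x^{m-i} y^{n-s+i}$; this introduces the binomial weight $\binom{m+n-s}{n-s+i}$. Grouping by $i$ and collecting the inner sum that came from the expansion of $c(Y_n)$ produces the expression (\ref{eq: def alphai}) for $\alpha_i(n,m,d)$.

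The main obstacle is purely combinatorial bookkeeping. Depending on the relative sizes of $s$, $m$, and $n$, the $j$-expansion of $c^{\mathsmaller{M}}(X \times Y_n)$ naturally splits into three regimes ($s \le m$, $m < s \le n$, and $n < s \le n+m$), and one must verify that the convention $\binom{a}{b} = 0$ for $b < 0$ or $b > a$ correctly merges these three pieces into the single range $i \le s \le n+m$ appearing in the final formula. An index shift from $j$ (in the expansion of $c(Y_n)$) to $s - i$ (after collecting the contributions to $c_i^{\mathsmaller{M}}(X)$) must also be tracked with care, but once organized this way the identification with $\alpha_i(n,m,d)$ is immediate.
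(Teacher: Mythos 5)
Your proposal is correct and follows essentially the same route as the paper: multiplicativity of the Chern--Mather class on the product, the closed form $c(Y_n)=(1+y)^{n+2}/(1+dy)$ from the normal bundle sequence, substitution into the polar-class formula \eqref{eq: degdual Mather} with $h=x+y$, and extraction of the coefficient of $x^my^n$ via the binomial weight $\binom{m+n-s}{n-s+i}$. The bookkeeping across the three regimes of $s$ that you flag is exactly what the paper's proof carries out explicitly.
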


\begin{theorem}\label{thm: stabilization XxQ}
	Let $X$ be a projective variety of dimension $m$. For every integer $n\ge 0$, let $Y_n\subset\PP^{n+1}$ be a smooth hypersurface of degree $d$. Then
	\[
	\delta_0(X\times Y_n)=(d-1)^{n-m}\delta_0(X\times Y_m)\quad\forall\,n\ge m\,.
	\]
\end{theorem}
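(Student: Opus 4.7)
By Lemma \ref{lem: expressionalphai}, the theorem reduces to the scalar identity
\[
\alpha_i(n,m,d) = (d-1)^{n-m}\alpha_i(m,m,d),\quad 0\le i\le m,\ n\ge m,
\]
and by induction on $n$ this amounts to the one-step recurrence $\alpha_i(n,m,d) = (d-1)\,\alpha_i(n-1,m,d)$ for $n\ge m+1$. My first step will be to exploit the Pascal-type relation $g_t(n) = g_t(n-1) + g_{t-1}(n-1)$ for the inner coefficients $g_t(n) := \sum_{k=0}^t\binom{n+2}{k}(-d)^{t-k}$, which is the coefficient-wise translation of $(1+y)\,h_{n+1}(y) = h_{n+2}(y)$ with $h_N(y) := (1+y)^N/(1+dy)$. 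Plugging this recursion into the defining sum of $\alpha_i(n,m,d)$ and reindexing the shifted piece, I expect to obtain, setting $M:=m-i$,
\[
\alpha_i(n,m,d) + \alpha_i(n-1,m,d) = (-1)^i A,\quad A:=\sum_{t\ge 0}(-1)^t(M+n+1-t)\,g_t(n-1)\binom{M+n-t}{n-t},
\]
so that the desired recurrence becomes the single identity $(-1)^i A = d\,\alpha_i(n-1,m,d)$.

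My next step will be to encode both sides as coefficient extractions in one variable $z$. Writing $F(w) := (1+w)^{n+1}/(1+dw)$, I have the generating series identities $\sum_t(-1)^tg_t(n-1)\,z^t = F(-z)$ and $\binom{M+n-t}{n-t} = [z^{n-t}](1-z)^{-(M+1)}$, while a differentiation gives $\sum_t t\,(-1)^tg_t(n-1)\,z^t = -zF'(-z)$. A standard convolution argument then turns $A$ and $d\,(-1)^i\alpha_i(n-1,m,d)$ into
\[
A = [z^n]\,\frac{(M+n+1)F(-z)+zF'(-z)}{(1-z)^{M+1}},\quad d\,(-1)^i\alpha_i(n-1,m,d) = d\,[z^n]\,\frac{z\bigl[(M+n)F(-z)+zF'(-z)\bigr]}{(1-z)^{M+1}}.
\]

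The hard part — and the only place where the hypothesis $n\ge m+1$ will enter — is the simplification of the numerator of $A - d(-1)^i\alpha_i(n-1,m,d)$. The key inputs will be the two identities obtained from $(1+dw)F(w) = (1+w)^{n+1}$ and its derivative, evaluated at $w=-z$: namely $(1-dz)F(-z) = (1-z)^{n+1}$ and $(1-dz)F'(-z) = (n+1)(1-z)^n - dF(-z)$. Substituting them eliminates every occurrence of $F$ and $F'$ and collapses the numerator to $(1-z)^n\bigl[(M+n+1)(1-z)+(n+1)z\bigr] = (1-z)^n(M+n+1-Mz)$, so that
\[
A - d\,(-1)^i\alpha_i(n-1,m,d) = [z^n]\,(1-z)^{n-M-1}(M+n+1-Mz).
\]
For $n\ge m+1\ge M+1$, the right-hand side is the coefficient of $z^n$ in a polynomial of degree $n-M$ when $M\ge 1$ and of degree $n-1$ when $M=0$; in either case the degree is strictly smaller than $n$, so this coefficient vanishes. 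This gives $(-1)^i A = d\,\alpha_i(n-1,m,d)$, hence the recurrence, and closes the induction.
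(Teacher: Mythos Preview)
Your argument is correct. The reduction to the one–step recurrence $\alpha_i(n,m,d)=(d-1)\,\alpha_i(n-1,m,d)$ and the use of the Pascal relation $g_t(n)=g_t(n-1)+g_{t-1}(n-1)$ mirror exactly what the paper does (there, Pascal is applied to $\binom{n+3}{k}$ inside the inner sum). Where you diverge is in the verification of the residual identity. The paper isolates a $d$-free binomial identity
\[
\sum_{r=i}^{n+m}(-1)^{r}(m+n+1-r)\binom{n+2}{r+1-i}\binom{m+n-r}{n-r+i} = (-1)^i(n+m+2-i)\binom{m+n+1-i}{n+1}
\]
and proves it by splitting into the three cases $i=0$, $i=m$, $1\le i\le m-1$, each time invoking Zeilberger's algorithm (via \texttt{Maple}) to produce a first-order recurrence that is then checked by induction. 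You instead keep $d$ in play, encode $A$ and $d\,(-1)^i\alpha_i(n-1,m,d)$ as coefficient extractions of rational functions built from $F(w)=(1+w)^{n+1}/(1+dw)$, and use the two algebraic relations $(1-dz)F(-z)=(1-z)^{n+1}$ and $(1-dz)F'(-z)=(n+1)(1-z)^n-dF(-z)$ to collapse the difference to $[z^n]\,(1-z)^{n-M-1}(M+n+1-Mz)$, a polynomial of degree at most $n-M<n$ when $n\ge m+1$. Your route is shorter, uniform in $i$, and avoids computer algebra altogether; the paper's route has the merit of exposing a clean $d$-independent binomial identity as a byproduct, but at the cost of case analysis and an external algorithmic certificate.
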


\begin{proof}
For all $i\in\{0,\ldots,m\}$, let $\alpha_i=\alpha_i(n,m,d)$ be the coefficient of $\deg(c_i^M(X))$ introduced in \eqref{eq: def alphai}. In what follows, we use that $\binom{a}{b}=0$ if $b$ is a negative integer and we shall sometimes use the formalism of {\it gamma functions} \cite[\S 6]{SS} for convenience.

\noindent In order to prove the statement, it is sufficient to show that
\[
\alpha_i(n+1,m,d)=(d-1)\alpha_i(n,m,d)
\]
for all $n\ge m$ and for all $i\in\{0,\ldots,m\}$. We have that

\begin{align*}
\alpha_i(n+1,m,d)&=\sum_{s=i}^{n+m+1}(-1)^s(m+n+2-s)\left[\sum_{k=0}^{s-i}\binom{n+3}{k}(-d)^{s-i-k}\right]\binom{m+n+1-s}{n+1-s+i} \\
&=(-1)^i(n+m+2-i)\binom{m+n+1-i}{n+1}\\
&\quad+\sum_{s=i+1}^{n+m+1}(-1)^s(m+n+2-s)\left[\sum_{k=0}^{s-i}\binom{n+3}{k}(-d)^{s-i-k}\right]\binom{m+n+1-s}{n+1-s+i}\,.
\end{align*}
For the ease of notation, set $\rho\coloneqq(-1)^i(n+m+2-i)\binom{m+n+1-i}{n+1}$. Then
\begin{align*}
\alpha_i(n+1,m,d)&=\rho+\sum_{r=i}^{n+m}(-1)^{r+1}(m+n+1-r)\left[\sum_{k=0}^{r+1-i}\binom{n+3}{k}(-d)^{r+1-i-k}\right]\binom{m+n-r}{n-r+i}\\
&=\rho+\sum_{r=i}^{n+m}(-1)^{r+1}(m+n+1-r)\left[\sum_{k=0}^{r+1-i}\binom{n+2}{k}(-d)^{r+1-i-k}\right]\binom{m+n-r}{n-r+i}\\
&\quad+\sum_{r=i}^{n+m}(-1)^{r+1}(m+n+1-r)\left[\sum_{k=0}^{r-i}\binom{n+2}{k}(-d)^{r-i-k}\right]\binom{m+n-r}{n-r+i}\\
&=\rho+\sum_{r=i}^{n+m}(-1)^{r+1}(m+n+1-r)\left[\sum_{k=0}^{r-i}\binom{n+2}{k}(-d)^{r+1-i-k}+\binom{n+2}{r+1-i}\right]\binom{m+n-r}{n-r+i}\\
&\quad-\sum_{r=i}^{n+m}(-1)^{r}(m+n+1-r)\left[\sum_{k=0}^{r-i}\binom{n+2}{k}(-d)^{r-i-k}\right]\binom{m+n-r}{n-r+i}\\
&=\rho+d\sum_{r=i}^{n+m}(-1)^{r}(m+n+1-r)\left[\sum_{k=0}^{r-i}\binom{n+2}{k}(-d)^{r-i-k}\right]\binom{m+n-r}{n-r+i}\\
&\quad+\sum_{r=i}^{n+m}(-1)^{r+1}(m+n+1-r)\binom{n+2}{r+1-i}\binom{m+n-r}{n-r+i}\\
&\quad-\sum_{r=i}^{n+m}(-1)^{r}(m+n+1-r)\left[\sum_{k=0}^{r-i}\binom{n+2}{k}(-d)^{r-i-k}\right]\binom{m+n-r}{n-r+i}\\
&=\rho-\sum_{r=i}^{n+m}(-1)^{r}(m+n+1-r)\binom{n+2}{r+1-i}\binom{m+n-r}{n-r+i}\\
&\quad+(d-1)\sum_{r=i}^{n+m}(-1)^{r}(m+n+1-r)\left[\sum_{k=0}^{r-i}\binom{n+2}{k}(-d)^{r-i-k}\right]\binom{m+n-r}{n-r+i}\\
&=(d-1)\alpha_i(n,m,d)+\rho-\sum_{r=i}^{n+m}(-1)^{r}(m+n+1-r)\binom{n+2}{r+1-i}\binom{m+n-r}{n-r+i}\,.
\end{align*}

\noindent To finish off the proof, we have to show the binomial identity
\begin{equation}\label{eq: masterbinomial}
\sum_{r=i}^{n+m}(-1)^{r}(m+n+1-r)\binom{n+2}{r+1-i}\binom{m+n-r}{n-r+i} = (-1)^i(n+m+2-i)\binom{m+n+1-i}{n+1}
\end{equation}
for all $n\ge m$ and for all $i\in\{0,\ldots,m\}$.

\noindent {\bf Case $i=0$ of \eqref{eq: masterbinomial}}. We have to show the identity 
\begin{equation}\label{eq: id1}
\sum_{r=0}^{n}(-1)^{r}(m+n+1-r)\binom{n+2}{r+1}\binom{m+n-r}{n-r}=(n+m+2)\binom{m+n+1}{n+1}\quad\forall n\ge m\,.
\end{equation}
Since $\binom{m+n-r}{n-r}=0$ for $r>n$, we let $r$ run from $0$ to $n$. The summand on the left-hand side of \eqref{eq: id1} is
\begin{align*}
(m+n+1-r)\binom{n+2}{r+1}\binom{m+n-r}{n-r} & = (n+2)\,\frac{(n+1)!}{(n-r+1)!\,(r+1)!}\,\frac{(m+n-r)!}{(n-r)!\,m!}\,(m+n+1-r)\\
& = (n+2)\,\frac{(n+1)!}{(n-r+1)!\,(r+1)!}\,\frac{(m+n+1-r)!}{m!\,(n-r+1)!}\\
& = (n+2)\binom{n+1}{r+1}\binom{m+n+1-r}{m}\,. 
\end{align*}
Therefore, setting $f(n)\coloneqq \sum_{r=0}^{n} (-1)^r \binom{n+1}{r+1}\binom{m+n+1-r}{m}$ and dividing \eqref{eq: id1} by $n+2$, we get that \eqref{eq: id1} is equivalent to 
\begin{equation}\label{eq: id1equiv}
f(n) = \binom{m+n+2}{m}\quad\forall\,n\ge m\,. 
\end{equation}
We prove \eqref{eq: id1equiv} by induction on $n$. Our base case is $n=m$. We invoke  Zeilberger algorithm \cite{Zei} with the \texttt{Maple} \cite{maple} code
\begin{small}
\begin{Verbatim}[commandchars=\\\{\}]
with(SumTools[Hypergeometric]);
T1 := (-1)^r*binomial(m+1,r+1)*binomial(2*m+1-r,m);
ZeilbergerRecurrence(T1,m,r,f,0..m)
\end{Verbatim}
\end{small}
and the output gives the identity
\[
f(m) = \frac{2\Gamma(2+2m)}{(2+m)\Gamma(m+1)^2}\,,
\]
where $\Gamma(z)$ is the {\it gamma function}, whose value on a positive integer $n$ is $\Gamma(n) = (n-1)!$ \cite[\S 6]{SS}. Since $m\in\mathbb N$, we see that 
\[
\frac{2\Gamma(2m+2)}{(m+2)\Gamma(m+1)^2} = \frac{2(2m+1)!}{(m+2)\,m!\,m!}= \frac{2\,(2m+1)!\,(m+1)}{(m+2)\,m!\,m!\,(m+1)} 
=\frac{(2m+2)!}{(m+2)!\,m!} = \binom{2m+2}{m}\,,
\]
which establishes the base case. Suppose now $n>m$. Again, using Zeilberger algorithm, we find the recurrence
\begin{equation}\label{eq: zeilid1}
(1+n-m)f(n) + (n+3) f(n+1) = \frac{2\Gamma(n+3+m)}{\Gamma(n+2)\Gamma(m+1)}\,.
\end{equation}
Using \eqref{eq: zeilid1}, we derive
\begin{align*}
f(n+1) &= \frac{1}{n+3}\,\left[\frac{2\Gamma(n+3+m)}{\Gamma(n+2)\Gamma(m+1)} - (1+n-m)f(n)\right]\\
&= \frac{1}{n+3}\,\left[\frac{2\,(m+n+2)!}{m!\,(n+1)!} - (1+n-m)f(n)\right]\\
&= \frac{1}{n+3}\,\left[2(n+2)\binom{m+n+2}{m} - (1+n-m)f(n)\right]\,.
\end{align*}
By induction hypothesis, one has $f(n) = \binom{m+n+2}{m}$ and hence
\[
f(n+1) = \frac{2(n+2)+m-n-1}{n+3}\binom{m+n+2}{m} = \frac{m+n+3}{n+3}\binom{m+n+2}{m} = \binom{m+n+3}{m}\,,
\]
which verifies the assertion.

\noindent {\bf Case $i=m$ of \eqref{eq: masterbinomial}}. We have to show the identity
\begin{equation}\label{eq: id2}
\sum_{r=m}^{n+m}(-1)^{r}(m+n+1-r)\binom{n+2}{r+1-m} = (-1)^m(n+2)\,. 
\end{equation}
Let $s=r-m$. Thus the left-hand side of equality \eqref{eq: id2} becomes
\[
\begin{gathered}
\sum_{s=0}^n (-1)^{m+s} (n+1-s)\binom{n+2}{s+1} = (-1)^m \sum_{s=0}^n (-1)^s (n+2)\,\frac{(n+1-s)\, (n+1)!}{(s+1)!\,(n-s+1)!} =\\
= (-1)^m (n+2)\,\sum_{s=0}^{n} (-1)^s \frac{(n+1)!}{(s+1)!\,(n-s)!} = (-1)^m (n+2)\,\sum_{s=0}^n (-1)^s \binom{n+1}{s+1} = (-1)^m(n+2)\,,
\end{gathered}
\]
as $\sum_{s=0}^n (-1)^s\binom{n+1}{s+1}=1$. This proves equality \eqref{eq: id2}.

\noindent {\bf Cases $i\in[m-1]$ of \eqref{eq: masterbinomial}}. We have to show the identity
\begin{equation}\label{eq: id3}
\sum_{r=i}^{n+m}(-1)^{r}(m+n+1-r)\binom{n+2}{r+1-i}\binom{m+n-r}{n-r+i} = (-1)^i(n+m+2-i)\binom{m+n+1-i}{n+1}
\end{equation}
for all $i\in[m-1]$ and $n\ge m$. Put $j = m-i$ and $s = r-m+j$. Then \eqref{eq: id3} is equivalent to the equality
\begin{equation}\label{eq: id3equiv}
\sum_{s=0}^{n+j} (-1)^s \frac{(n+1-s+j)!}{\Gamma(n+2-s)\,(s+1)!\,\Gamma(n-s+1)} = \frac{(n+2+j)!}{(n+1)!\,(n+2)!}\,, 
\end{equation}
for all $j\in[m-1]$ and $n\ge m$. Since $j=m-i\geq 1$ and $0\leq s\leq n+j$, the last index we are summing over is $\geq n+1$.
Note that the summands on the left-hand side vanish whenever $s\geq n+1$, therefore we may define
\[ g(n,j)\coloneqq\sum_{s=0}^{n}(-1)^s\frac{(n+1-s+j)!}{\Gamma(n+2-s)\,(s+1)!\,\Gamma(n-s+1)}
\]
and rewrite \eqref{eq: id3equiv} as
\begin{equation}\label{eq: id3equiv2}
g(n,j) = \frac{(n+2+j)!}{(n+1)!\,(n+2)!}\,.
\end{equation}

\noindent Zeilberger algorithm gives the recurrence relation 
\begin{equation}\label{eq: recurid3}
(n+1-j)g(n,j) + (n^2+5n+6)g(n+1,j) = \frac{2\,\Gamma(n+3+j)}{\Gamma(n+2)^2}\,.
\end{equation}
Now, the function 
\[
h(n,j) \coloneqq \frac{(n+2+j)!}{(n+1)!\,(n+2)!}
\]
is seen to satisfy the same recurrence \eqref{eq: recurid3}. One checks $g(j,j) = h(j,j)$ for all $j\in \mathbb N$ by induction. For all $n\geq j$, one has $g(n,j) = h(n,j)$ using \eqref{eq: recurid3} and  $g(j,j) = h(j,j)$ as base case.
\end{proof}

A result due to Weyman and Zelevinsky detects the dual defectiveness of Segre products \cite[Theorem 0.1]{WZ94} assumed in Corollary \ref{cor: stabilization XxQ}. 

\begin{theorem}[{\bf Weyman-Zelevinsky}]\label{thm: WZ94}
	Let $X_1$ and $X_2$ be (embedded) irreducible projective varieties. The dual variety $(X_1\times X_2)^\vee$ is a hypersurface if and only if
	\begin{enumerate}
		\item[(i)] $\mathrm{codim}(X_1^\vee)-1\le \dim(X_2)$, and 
		\item[(ii)] $\mathrm{codim}(X_2^\vee)-1\le \dim(X_1)$.
	\end{enumerate}
\end{theorem}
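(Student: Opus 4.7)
The plan is to analyze the conormal variety $\mathcal{N}_Y$ of $Y = X_1 \times X_2 \subset \PP(V_1 \otimes V_2)$ together with its projection onto $\PP((V_1\otimes V_2)^*)$; by definition $(X_1 \times X_2)^\vee$ is a hypersurface precisely when this projection is generically finite onto its image. At a smooth point $y = y_1 \otimes y_2$ the affine tangent space to $Y$ splits as
\[
\hat T_{y_1}\hat X_1 \otimes \langle y_2\rangle \;+\; \langle y_1\rangle \otimes \hat T_{y_2}\hat X_2\,,
\]
of dimension $\dim X_1 + \dim X_2 + 1$ (the one-dimensional intersection being the line through $y$). Viewing $T \in (V_1\otimes V_2)^*$ as a linear map $V_2 \to V_1^*$, the condition that $T$ lies in the conormal fiber at $y$ is equivalent to the pair of incidence conditions $(y_1, [T(y_2)]) \in \mathcal{N}_{X_1}$ and $(y_2, [T^*(y_1)]) \in \mathcal{N}_{X_2}$.

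I would then use the two natural projections $\pi_i\colon \mathcal{N}_Y \to \mathcal{N}_{X_i}$ to translate the generic fiber dimension of $\mathcal{N}_Y \to \PP((V_1\otimes V_2)^*)$ into polar data $\delta_{X_i} = \mathrm{codim}(X_i^\vee) - 1$ and $\dim X_i$. The contact locus in $X_i$ of a generic tangent hyperplane is $\delta_{X_i}$-dimensional, and a direct count of the number of independent linear equations imposed on $T$ by the two conormal conditions at fixed $(y_1, y_2)$ (which share exactly one relation, $T(y_1, y_2) = 0$) confirms $\dim \mathcal{N}_Y = \dim V_1 \dim V_2 - 2$. Combining this with the bilinear coupling imposed by $T$, a careful dimension count should show that the generic fiber of $\mathcal{N}_Y \to Y^\vee$ vanishes in dimension iff both $\delta_{X_1} \le \dim X_2$ and $\delta_{X_2} \le \dim X_1$ hold.

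For sufficiency, assume both inequalities; I would pick a generic $T \in Y^\vee$ and show, via a Bertini-type argument on the incidence $\{(y_1, y_2, T)\colon (y_1,[T(y_2)])\in\mathcal{N}_{X_1},\ (y_2,[T^*(y_1)])\in\mathcal{N}_{X_2}\}$, that only finitely many $(y_1, y_2)$ realize tangency at $T$, exploiting the product structure and the semicontinuity of fiber dimensions. For necessity, assuming say $\delta_{X_1} > \dim X_2$, the $\delta_{X_1}$-dimensional contact locus of $[T(y_2)]$ in $X_1$ cannot be ``absorbed'' by the $\dim X_2$-dimensional freedom in $y_2$, forcing the fiber of $\mathcal{N}_Y \to \PP(V^*)$ to be positive-dimensional at generic $T$ and hence $Y^\vee$ to have codimension at least two.

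The main obstacle is the precise bookkeeping of dimensions in this coupled incidence correspondence, since $T(y_2)$ and $T^*(y_1)$ depend bilinearly on $(T, y_1, y_2)$, and verifying genericity of intersection requires some care. A cleaner technical route is through the second fundamental form of the Segre embedding at $y_1\otimes y_2$: it decomposes as the two symmetric blocks inherited from $X_1$ and $X_2$ plus a ``Segre'' mixing block of the form $(v_1, v_2) \mapsto v_1\otimes v_2$ modulo $\hat T_y Y$; computing its generic corank over the projective conormal space relates $\delta_Y$ directly to $\delta_{X_1} - \dim X_2$ and $\delta_{X_2} - \dim X_1$, and recovers the ``if and only if'' statement without invoking Bertini, at the cost of a more intricate linear algebra computation.
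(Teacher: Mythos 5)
First, a remark on the comparison you asked for: the paper does not prove this statement at all --- it is quoted as an external result of Weyman--Zelevinsky (\cite[Theorem 0.1]{WZ94}) --- so there is no in-paper argument to measure yours against, and your proposal has to stand on its own. Its framework is the right one and is correctly set up: the affine tangent space to the Segre product at $y_1\otimes y_2$ does split as you write, and a hyperplane $T$, viewed as a bilinear form, is tangent there precisely when the contraction $T(-,y_2)$ annihilates $\hat{T}_{y_1}X_1$ and $T(y_1,-)$ annihilates $\hat{T}_{y_2}X_2$ (your two incidence conditions, up to the degenerate cases $T(-,y_2)=0$ or $T(y_1,-)=0$, which you never address). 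The computation $\dim\mathcal{N}_Y=\dim\PP(V_1\otimes V_2)-1$ is also fine, though it is a general property of conormal varieties and carries no information about defectivity.

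The genuine gap is that both implications are left at the level of ``a careful dimension count should show'' and ``cannot be absorbed'', and these are exactly where the theorem lives. For necessity, the missing step that turns your heuristic into a proof is the following: fix $T$ tangent at $y_1\otimes y_2$ with $\xi_1\coloneqq T(-,y_2)\neq 0$; the contact locus $C_{\xi_1}\subset X_1$ has dimension at least $\delta_{X_1}=\mathrm{codim}(X_1^\vee)-1$ (lower bound on fibre dimension for $\mathcal{N}_{X_1}\to X_1^\vee$), and for $y_1'\in C_{\xi_1}$ the only remaining tangency requirement at $y_1'\otimes y_2$ is that $T(y_1',-)$ annihilate $\hat{T}_{y_2}X_2$, which imposes merely $\dim X_2$ additional \emph{linear} conditions on $y_1'$ (the condition $T(y_1',y_2)=\xi_1(y_1')=0$ is automatic). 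Hence $\delta_{X_1}>\dim X_2$ forces every contact locus of $T$ on $X_1\times X_2$ to have dimension at least $\delta_{X_1}-\dim X_2>0$; without this linearity observation your ``absorption'' argument is only a plausibility claim, since moving $y_1$ in $C_{\xi_1}$ changes $T(y_1,-)$ and could a priori destroy the $X_2$-side tangency. For sufficiency, the proposed ``Bertini-type argument'' does not apply as stated: the relevant $T$ is generic in $(X_1\times X_2)^\vee$, not in $\PP((V_1\otimes V_2)^*)$, and the whole question is whether that dual is large; you would instead need to exhibit a single tangent hyperplane with finite contact locus and invoke upper semicontinuity of fibre dimension on the irreducible conormal variety, or follow Weyman--Zelevinsky's actual route via the multiplicativity of their $\mu$-invariants. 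As written, the proposal is a reasonable plan, not a proof.
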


\noindent We are finally ready to state the main result of this section.

\begin{corollary}\label{cor: stabilization XxQ}
Let $X$ be a projective variety of dimension $m$. For $n\ge 0$, let $Q_n\subset\PP^{n+1}$ be a smooth quadric hypersurface. 
Suppose that $(X\times Q_m)^\vee$ is a hypersurface. Then $(X\times Q_n)^\vee$ is a hypersurface of the same degree as $(X\times Q_m)^\vee$ for all $n\ge m$.
\end{corollary}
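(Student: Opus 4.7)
The plan is to derive the corollary as an immediate specialization of Theorem \ref{thm: stabilization XxQ} to the case $d=2$. Since $Q_n\subset\PP^{n+1}$ is a smooth hypersurface of degree $2$, that theorem yields $(d-1)^{n-m}=1$, and hence
\[
\delta_0(X\times Q_n) = \delta_0(X\times Q_m)\qquad\text{for all }n\ge m.
\]

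To convert this equality of polar classes into the statement about dual varieties, I would invoke the two standard facts recalled earlier in the section. First, for an irreducible projective variety $Y$, one has $\mathrm{codim}(Y^\vee)-1 = \min\{i : \delta_i(Y)\neq 0\}$; in particular, $Y^\vee$ is a hypersurface if and only if $\delta_0(Y)\neq 0$. Second, when $Y^\vee$ is a hypersurface, formula \eqref{eq: degdual Mather} asserts that $\deg(Y^\vee) = \delta_0(Y)$. By hypothesis $(X\times Q_m)^\vee$ is a hypersurface, so $\delta_0(X\times Q_m) > 0$; combining this with the stabilization above gives $\delta_0(X\times Q_n) > 0$ for all $n\ge m$, forcing $(X\times Q_n)^\vee$ to be a hypersurface of degree
\[
\deg\bigl((X\times Q_n)^\vee\bigr) = \delta_0(X\times Q_n) = \delta_0(X\times Q_m) = \deg\bigl((X\times Q_m)^\vee\bigr).
\]

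As a sanity check, one can verify the hypersurface condition directly via the Weyman--Zelevinsky criterion (Theorem \ref{thm: WZ94}): condition (ii) reduces to $\mathrm{codim}(Q_n^\vee)-1 = 0\le m$, which holds because $Q_n^\vee$ is again a smooth quadric; condition (i) reads $\mathrm{codim}(X^\vee)-1\le n$, which is monotone in $n$ and holds for $n=m$ by the hypothesis on $X\times Q_m$. Thus both routes confirm that the dual stays a hypersurface as $n$ grows.

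No serious obstacle is expected: the entire analytical burden has been absorbed into Theorem \ref{thm: stabilization XxQ}, whose proof handles the nontrivial binomial identity \eqref{eq: masterbinomial}. The corollary itself is then a clean one-line specialization to $d=2$, combined with the dictionary between $\delta_0$ and the degree/codimension of the dual variety.
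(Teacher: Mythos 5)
Your proposal is correct and matches the paper's own proof in all essentials: both reduce the degree equality to Theorem \ref{thm: stabilization XxQ} with $d=2$ and use the identification $\deg(Y^\vee)=\delta_0(Y)$. The only cosmetic difference is that the paper establishes the hypersurface condition for all $n\ge m$ directly via the Weyman--Zelevinsky criterion, which you relegate to a ``sanity check,'' preferring the equivalent characterization that $Y^\vee$ is a hypersurface iff $\delta_0(Y)\neq 0$; both routes are valid and rely on facts recalled in the section.
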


\begin{proof}
By Theorem \ref{thm: WZ94}, the variety $(X\times Q_m)^\vee$ is a hypersurface if and only if $m\geq \mathrm{codim}(X^\vee)-1$. If this condition is satisfied, then it is satisfied for all $n\ge m$. In addition, equation \eqref{eq: degdual} gives $\deg[(X\times Q_n)^\vee]=\delta_0(X\times Q_n)$. The statement follows by Theorem \ref{thm: stabilization XxQ} with $d=2$.
\end{proof}

\begin{example}
	Let $X=\PP^1\times\PP^1\subset\PP^3$ and $Q_n$ be a smooth quadric hypersurface in $\PP^{n+1}$. Thus $\dim(X)=2>1=\mathrm{codim}(X^\vee)$. By direct computation, one finds that
	\[
	\deg[(X\times Q_n)^\vee]=
	\begin{cases}
	4 & \mbox{for $n=0$}\\
	12 & \mbox{for $n=1$}\\
	24 & \mbox{for $n=2$}\,.
	\end{cases}
	\]
	Theorem \ref{thm: stabilization XxQ} implies $\deg[(X\times Q_n)^\vee]=24$ for all $n\geq 2=\dim(X)$. 
\end{example}

\subsection{The ED polynomial of a Segre product of two projective varieties}\label{sec: edpolysegre}

The stabilization behavior highlighted in Corollary \ref{cor: stabilization XxQ} has an interesting counterpart related to the ED degree and the ED polynomial of the Segre product between a projective variety and a projective space.
 
For a polynomial function $f$ on a complex vector space $V$, we denote by $\mathcal{V}(f)$ the variety defined by the vanishing of $f$. 
Let $V_1$ and $V_2$ be two complex vector spaces equipped with (real) quadratic forms $q_1$, $q_2$. As in \S\ref{sec: stabilization}, $V_1$ and $V_2$ are complexifications of two real Euclidean spaces. Let $n_i+1 =\dim(V_i)$ and denote $Q_1$ and $Q_2$ the isotropic quadric cones defined by the vanishing of $q_1$ and $q_2$, i.e. $Q_i(x) = \mathcal{V}(q_i(x,x))$.

As above, $V\coloneqq V_1\otimes V_2$ itself is equipped with an Euclidean structure given by the Frobenius inner product $q\coloneqq q_1\otimes q_2$, see Definition \ref{def: Frobenius}. Note that this can be regarded as the familiar space of matrices. Denote by $Q$ the induced isotropic quadric in $V_1\otimes V_2$. 

Consider two affine cones $X_1\subset V_1$ and $X_2\subset V_2$. We also denote by $X_1$ and $X_2$ the corresponding projective varieties in $\PP(V_1)$ and $\PP(V_2)$. Let $X_1\times X_2$ be their Segre product, see Definition \ref{def: segprod}. 

The {\it (squared) distance} $\varepsilon^2=q(t-p)$ between $t\in V_1\otimes V_2$ and a point $p\in(X_1\times X_2)^\vee$ satisfies an algebraic relation of the form
\begin{equation}\label{eq: edpoly}
c_0(t)+c_1(t)\varepsilon^2+\cdots+c_N(t)\varepsilon^{2N} = 0\,,
\end{equation}
where $N=\mathrm{EDdegree}_F\left(X_1\times X_2\right)=\mathrm{EDdegree}_F\left((X_1\times X_2)^\vee\right)$, and the $c_j(t)$'s are homogeneous polynomials in the coordinates of the tensor $t\in V_1\otimes V_2$. 

\begin{definition}[{\bf ED polynomial}]\label{def: EDpoly}
The polynomial on the left-hand side of \eqref{eq: edpoly} is the {\it ED polynomial} of $(X_1\times X_2)^\vee$. This is a univariate polynomial 
in $\varepsilon^2$ and is denoted $\mathrm{EDpoly}_{(X_1\times X_2)^\vee,t}(\varepsilon^2)$. 
\end{definition}

A consequence of \cite[Corollary 5.5]{OS} is the following result.

\begin{proposition}\label{prop: vanishing locus lowest coefficient}
Assume that $(X_1\times X_2)\cap Q$ is a reduced variety. Then the locus of all $t\in V_1\otimes V_2$ where $\mathrm{EDpoly}_{(X_1\times X_2)^\vee,t}(0)$ vanishes is
\[
(X_1\times X_2)^\vee \cup [(X_1\times X_2)\cap Q]^\vee = (X_1\times X_2)^\vee \cup [(X_1\cap Q_1)\times X_2]^\vee \cup [X_1\times (X_2\cap Q_2)]^\vee \,.
\]
Furthermore, by \cite[Proposition 4.4.13]{Sod20}, whenever $(X_1\times X_2)^\vee$ is a hypersurface, then its defining polynomial appears with multiplicity two in the term $c_0$.
\end{proposition}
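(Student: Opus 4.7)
The plan is to deduce the statement by combining the general theorem on the vanishing locus of $\mathrm{EDpoly}_{Y^\vee,t}(0)$ (applied to $Y=X_1\times X_2$) with a direct calculation of how the isotropic quadric $Q$ cuts a Segre product.

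First I would invoke \cite[Corollary 5.5]{OS} directly: setting $Y\coloneqq X_1\times X_2\subset\PP(V_1\otimes V_2)$, the hypothesis that $Y\cap Q$ is reduced is exactly what is needed to conclude that
\[
\{t\in V_1\otimes V_2\mid\mathrm{EDpoly}_{Y^\vee,t}(0)=0\}=Y^\vee\cup(Y\cap Q)^\vee,
\]
which gives the first equality of the proposition.

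Next, the crucial geometric computation identifies the irreducible components of $Y\cap Q$. Because the Frobenius form is multiplicative on decomposable tensors, for $v_1\otimes v_2$ in the affine cone over $X_1\times X_2$ one has
\[
q(v_1\otimes v_2,v_1\otimes v_2)=q_1(v_1,v_1)\,q_2(v_2,v_2),
\]
so $v_1\otimes v_2\in Q$ if and only if $v_1\in Q_1$ or $v_2\in Q_2$. Set-theoretically this yields
\[
(X_1\times X_2)\cap Q=\bigl[(X_1\cap Q_1)\times X_2\bigr]\cup\bigl[X_1\times(X_2\cap Q_2)\bigr],
\]
and under the reducedness assumption the scheme-theoretic equality holds too, so these are the (generically reduced) components.

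Finally I would pass to duals. Since neither of the two components above is contained in the other (assuming $X_1,X_2$ are not themselves contained in the isotropic quadrics, the degenerate cases being trivial), and since for a reduced reducible variety the dual is the union of the duals of the irreducible components, one obtains
\[
[(X_1\times X_2)\cap Q]^\vee=[(X_1\cap Q_1)\times X_2]^\vee\cup[X_1\times(X_2\cap Q_2)]^\vee,
\]
completing the displayed chain. The multiplicity-two statement is a direct quotation of \cite[Proposition 4.4.13]{Sod20}, and I would simply refer the reader there. The only real obstacle is making sure the hypothesis on reducedness of $(X_1\times X_2)\cap Q$ matches the hypothesis under which \cite[Corollary 5.5]{OS} is stated, and confirming that no spurious components arise in the dual decomposition; both are handled by the multiplicativity identity for $q$ above.
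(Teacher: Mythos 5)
Your proposal is correct and follows exactly the route the paper intends: the paper states this proposition as a direct consequence of \cite[Corollary 5.5]{OS} (for the first equality and the identification of the vanishing locus) together with \cite[Proposition 4.4.13]{Sod20} (for the multiplicity-two claim), and gives no further argument. Your explicit verification that $(X_1\times X_2)\cap Q$ decomposes as $[(X_1\cap Q_1)\times X_2]\cup[X_1\times(X_2\cap Q_2)]$ via the multiplicativity $q(v_1\otimes v_2,v_1\otimes v_2)=q_1(v_1,v_1)\,q_2(v_2,v_2)$, and that dualizing a union gives the union of the duals, correctly fills in the details the paper leaves implicit.
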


A similar argument used in the proof of \cite[Proposition 5.2.6]{Sod20} leads to the following inclusion.

\begin{proposition}\label{prop: inclusion leading coefficient}
The following inclusion holds true:
\[
\mathcal{V}(c_N) \subset [(X_1\cap Q_1)\times (X_2\cap Q_2)]^\vee \,.
\]
\end{proposition}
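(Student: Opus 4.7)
The plan is to adapt the argument of \cite[Proposition 5.2.6]{Sod20} to the Segre-product setting. I would realize the ED polynomial as the defining equation of the image of the critical incidence projection
\[
\pi\colon \overline{\mathcal{I}} \to V \times \mathbb{P}^1_{[\zeta:w]}, \qquad \mathcal{I} = \{(t,p,\varepsilon^2) : p \in (X_1\times X_2)^\vee_\mathrm{sm},\ t-p \in N_p(X_1\times X_2)^\vee,\ \varepsilon^2 = q(t-p)\},
\]
with $\varepsilon^2 = \zeta/w$, and $\overline{\mathcal{I}}$ the projective closure of $\mathcal{I}$ inside $V \times (X_1\times X_2)^\vee \times \mathbb{P}^1$. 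Under this identification, the leading coefficient $c_N(t)$ is, up to a nonzero scalar and radical, the defining equation of the image of $\overline{\mathcal{I}} \cap \{w=0\}$ in $V$; equivalently, $c_N(t)$ vanishes exactly on those $t$ admitting a ``critical point at infinity'' in the $\varepsilon^2$-direction.

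The heart of the argument is the identification of the boundary component $\overline{\mathcal{I}} \cap \{w=0\}$. A standard scaling argument shows that its points correspond to projectivized critical limits $[p] \in (X_1\times X_2)^\vee$ satisfying $q(p)=0$ together with a degenerate conormal tangency relating $t$, the tangent cone to $(X_1\times X_2)^\vee$ at $[p]$, and the isotropic quadric $Q$. Because $q = q_1 \otimes q_2$ factors multiplicatively on decomposable tensors, the Segre structure of $(X_1\times X_2)^\vee$ forces these critical-at-infinity directions to lie inside the doubly-isotropic locus $(X_1 \cap Q_1) \times (X_2 \cap Q_2)$, after which biduality yields $t \in [(X_1 \cap Q_1) \times (X_2 \cap Q_2)]^\vee$.

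The main obstacle I expect is the need to separate the doubly-isotropic contribution from the singly-isotropic ones. The pieces $(X_1 \cap Q_1) \times X_2$ and $X_1 \times (X_2 \cap Q_2)$ already appear in $\mathcal{V}(c_0)$ by Proposition \ref{prop: vanishing locus lowest coefficient}, so one must verify that these loci do not additionally contribute hypersurface components to $\mathcal{V}(c_N)$. The involution $\varepsilon^2 \leftrightarrow 1/\varepsilon^2$ of the ED polynomial, which reverses the roles of $c_0$ and $c_N$, is the natural guide here: singly-isotropic configurations correspond to critical distances collapsing to zero (visible in $c_0$) rather than diverging, while only the doubly-isotropic intersection produces genuine critical-at-infinity limits. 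Making this dichotomy rigorous via a local Jacobian computation of the conormal equations at the boundary stratum of $\overline{\mathcal{I}}$ is, I expect, the most delicate step.
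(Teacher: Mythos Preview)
Your overall strategy---compactifying the ED correspondence and reading $c_N$ from the boundary at $\varepsilon^2=\infty$, in the spirit of \cite[Proposition~5.2.6]{Sod20}---is exactly what the paper invokes. But two steps in your outline are wrong. First, you take $p\in(X_1\times X_2)^\vee$ and then assert that the limiting directions lie in $(X_1\cap Q_1)\times(X_2\cap Q_2)\subset X_1\times X_2$; these live in different varieties. The dual $(X_1\times X_2)^\vee$ has no Segre structure and its points are typically not decomposable, so the factorization $q(x\otimes y)=q_1(x)q_2(y)$ you rely on simply does not apply there. You should instead track the decomposable critical points on $X_1\times X_2$ itself (legitimate, since the ED polynomials of $Y$ and $Y^\vee$ share the same leading coefficient up to sign via the substitution $\varepsilon^2\mapsto q(t)-\varepsilon^2$), or work directly with pairs in the conormal variety $\mathcal{N}_{X_1\times X_2}$. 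Second, there is no involution $\varepsilon^2\leftrightarrow 1/\varepsilon^2$ on the ED polynomial; the only duality substitution is $\varepsilon^2\leftrightarrow q(t)-\varepsilon^2$, and it does not swap $c_0$ with $c_N$. So your proposed device for separating singly- from doubly-isotropic contributions cannot work.

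The actual mechanism is transversality of $Q$ with the Segre. Restricting $q$ to $X_1\times X_2$ and differentiating at $x\otimes y$ along the tangent space $T_xX_1\otimes y+x\otimes T_yX_2$ gives $(v,w)\mapsto 2q_1(x,v)\,q_2(y)+2q_1(x)\,q_2(y,w)$, which vanishes identically exactly when $q_1(x)=q_2(y)=0$. Thus $(X_1\cap Q_1)\times(X_2\cap Q_2)$ is precisely the tangency locus of $Q$ along $X_1\times X_2$, equivalently the singular locus of $(X_1\times X_2)\cap Q$. It is this non-transversality---not any involution---that singles out the doubly-isotropic stratum in the boundary analysis and, through the conormal description, forces the dual-variety membership of $t$. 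Identifying this tangency locus and feeding it into the general leading-coefficient argument of \cite{Sod20} is the step your plan is missing.
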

In other words, if $t\in V_1\otimes V_2$ admits strictly less critical points than $N$, then it is forced to have a specific isotropic structure.

Summing up, we may write the extreme coefficients of $\mathrm{EDpoly}_{(X_1\times X_2)^\vee,t}(\varepsilon^2)$ as
\[
c_0 = f^2\,g^\alpha,\quad c_N = h^\beta\,,
\]
for some square-free polynomials $f$, $g$, $h$, where
\begin{align*}
	\mathcal{V}(f) &= (X_1\times X_2)^\vee,\\
	\mathcal{V}(g) &= [(X_1\cap Q_1)\times X_2]^\vee \cup [X_1\times (X_2\cap Q_2)]^\vee,\\
	\mathcal{V}(h^\beta) &= [(X_1\cap Q_1)\times (X_2\cap Q_2)]^\vee\,,
\end{align*}
whenever the varieties on the right-hand side are hypersurfaces. (The polynomials are set to be $1$ if the corresponding varieties have higher codimensions.) 

Note that when $X_1=V_1$ and $X_2=V_2$ we are looking at the distance function from a Segre product of projective spaces. An immediate consequence of the {\it Eckart-Young Theorem} tells us that (assuming $n_1\le n_2$)
\[
\mathrm{EDpoly}_{(\PP(V_1)\times \PP(V_2))^\vee,t}(\varepsilon^2) = \det(t\,t^\mathsmaller{T}-\varepsilon^2I_{n_1})\,.
\]
In particular, $\mathrm{EDpoly}_{(\PP(V_1)\times \PP(V_2))^\vee,t}(\varepsilon^2)$ is a monic polynomial, i.e. the exponent $\beta$ of $h$ is zero. On the other hand, the lowest coefficient is $\det(t\,t^\mathsmaller{T})$. When $n_1=n_2$, then $\det(t\,t^\mathsmaller{T})=\det(t)^2=f^2$, whereas $g=1$ because its corresponding variety is not a hypersurface. Otherwise $n_1<n_2$ and then $\det(t\,t^\mathsmaller{T})=g$, whereas in this case $f=1$ because $(\PP(V_1)\times \PP(V_2))^\vee$ is not a hypersurface. That means that the exponent of $g$ is $\alpha=1$.

These observations lead to the following more general conjecture, confirmed by experimental data from the software \texttt{Macaulay2}. 

\begin{conjecture}\label{conj: exponents}
Assume that $(X_1\times X_2)\cap Q$ is a reduced variety. Then the extreme coefficients $c_0$ and $c_N$ of $\mathrm{EDpoly}_{(X_1\times X_2)^\vee,t}(\varepsilon^2)$ are respectively
\[
c_0 = f^2\,g,\quad c_N\in\R\,,
\]
where $\mathcal{V}(f) = (X_1\times X_2)^\vee$ and $\mathcal{V}(g) = [(X_1\cap Q_1)\times X_2]^\vee \cup [X_1\times (X_2\cap Q_2)]^\vee$.
\end{conjecture}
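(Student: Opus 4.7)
The plan is to address the two extreme coefficients separately by analyzing the ED correspondence of $(X_1\times X_2)^\vee$. Propositions~\ref{prop: vanishing locus lowest coefficient} and~\ref{prop: inclusion leading coefficient} already pin down the supports of $c_0$ and $c_N$, together with the multiplicity-$2$ contribution of $(X_1\times X_2)^\vee$ to $c_0$. What remains is to show that each of the two other components appears in $c_0$ with multiplicity exactly one and that $c_N$ is a nonzero constant. The reducedness hypothesis on $(X_1\times X_2)\cap Q=[(X_1\cap Q_1)\times X_2]\cup[X_1\times (X_2\cap Q_2)]$ distributes to reducedness of each piece, which will be the key input for the local multiplicity computation.

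For the multiplicity-one statement, it suffices to treat $[(X_1\cap Q_1)\times X_2]^\vee$, the other case being identical after swapping the roles of the factors. Pick a general smooth $t_0\in[(X_1\cap Q_1)\times X_2]^\vee$ and a one-parameter deformation $t_s$ that is generic for $s\neq 0$. Track the $N$ critical points $p_i(s)$ of $\mathrm{dist}(t_s,\cdot)$ on $(X_1\times X_2)^\vee$; by the duality yielding \cite[Corollary 5.5]{OS}, as $s\to 0$ exactly one of the squared distances $\varepsilon_i^2(s)=q(t_s-p_i(s))$ tends to $0$, and the limit critical point $p_i(0)$ lies in $(X_1\cap Q_1)\times X_2$. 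Reducedness of $(X_1\cap Q_1)\times X_2$ guarantees that this limit point is a simple point of the corresponding isotropic stratum of the ED correspondence, so a local Puiseux expansion forces $\varepsilon_i^2(s)=cs+O(s^2)$ with $c\neq 0$. Summed over the two components, this yields $c_0=f^2\,g$ with $g$ appearing with multiplicity exactly one.

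For the constancy of $c_N$, the approach is to compare $\mathrm{EDpoly}_{(X_1\times X_2)^\vee,t}(\varepsilon^2)$ with its ambient counterpart $\mathrm{EDpoly}_{(\PP(V_1)\times\PP(V_2))^\vee,t}(\varepsilon^2)=\det(t\,t^T-\varepsilon^2 I)$, whose leading coefficient is $\pm 1$ by Eckart-Young. Via the closed embedding $X_1\times X_2\hookrightarrow\PP(V_1)\times\PP(V_2)$, the ED correspondence of $(X_1\times X_2)^\vee$ maps into that of the ambient Segre, and $c_N$ quantifies the ``critical points at infinity'' of the projection from the critical scheme to $V$. The main obstacle is precisely here: Proposition~\ref{prop: inclusion leading coefficient} only gives the inclusion $\mathcal{V}(c_N)\subset[(X_1\cap Q_1)\times (X_2\cap Q_2)]^\vee$, and upgrading this to $c_N\in\R$ requires a uniform-in-$t$ properness statement for the extended critical scheme. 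Two possible routes are a Schubert-calculus class computation for the universal critical scheme in the spirit of the polar-class analysis of \S\ref{sec: stabilization degree dual of XxQ}, or a valuative criterion applied to the one-point compactification along $\varepsilon^2\to\infty$; the former would also give a predicted formula for the constant value of $c_N$ in terms of polar invariants of $X_1$ and $X_2$.
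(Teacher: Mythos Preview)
The statement you are trying to prove is labeled as a \emph{conjecture} in the paper, and the paper offers no proof of it; it is explicitly presented as supported only by computational evidence (``confirmed by experimental data from the software \texttt{Macaulay2}''). So there is no proof in the paper to compare against, and the relevant question is simply whether your argument is complete. It is not.

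For the multiplicity of $g$ in $c_0$, the central assertion that ``exactly one of the squared distances $\varepsilon_i^2(s)$ tends to $0$'' is not justified. Proposition~\ref{prop: vanishing locus lowest coefficient} tells you the set-theoretic vanishing locus of $c_0$ and the multiplicity-two contribution of $f$, but nothing in that result or in \cite[Corollary 5.5]{OS} controls how many branches of critical points collapse onto $(X_1\cap Q_1)\times X_2$ as $t_s\to t_0$. The reducedness of $(X_1\cap Q_1)\times X_2$ tells you that a single limiting critical point is a simple point of that stratum, but it does not rule out several distinct critical points limiting to several distinct points of the stratum, which would give higher multiplicity. Likewise the Puiseux claim $\varepsilon_i^2(s)=cs+O(s^2)$ with $c\ne 0$ needs an actual computation of the local order of vanishing of $q(t_s-p_i(s))$; reducedness of the isotropic locus is about the variety $(X_1\cap Q_1)\times X_2$, not directly about how the distance function behaves along the degenerating critical point.

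For the constancy of $c_N$ you are candid that this is ``the main obstacle'' and you only list two possible strategies (a polar-class computation or a valuative-criterion argument) without carrying either one out. Neither route is obviously workable: the polar-class machinery of \S\ref{sec: stabilization degree dual of XxQ} computes $\delta_0$ of specific Segre products, not the properness of a projection from a universal critical scheme, and the valuative criterion would require a compactification of the critical scheme with control at infinity that you have not constructed. As written, this half of the argument is a research plan, not a proof. Since the paper itself treats the statement as open, a genuine proof here would be a new result and would need these gaps filled rigorously.
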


The validity of Conjecture \ref{conj: exponents} implies the stabilization of the ED degree of $X\times\PP(V_2)$ for $n_2$ increasing.

\begin{proposition}
Assume Conjecture \ref{conj: exponents} is true. Let $X\subset \PP(V_1)$ be a projective variety and assume that $X\cap Q_1$ is reduced.
Then $\mathrm{EDdegree}_F(X\times\PP(V_2))$ stabilizes for $n_2\ge\dim(X)+1$. 
\begin{proof}
By Definition \ref{def: EDpoly}, we have the equality
\[
\deg(c_N)+2\,\mathrm{EDdegree}_F(X\times\PP(V_2)) = \deg(c_0)\,,
\]
where $\mathcal{V}(c_0)$ and $\mathcal{V}(c_N)$ are described respectively in Proposition \ref{prop: vanishing locus lowest coefficient}  and Proposition \ref{prop: inclusion leading coefficient}. 
		
By Theorem \ref{thm: WZ94}, the varieties $(X\times\PP(V_2))^\vee$ and $[(X\cap Q_1)\times \PP(V_2)]^\vee$ are not hypersurfaces for all $n_2\ge\dim(X)+1$, whereas the variety $(X\times Q_2)^\vee$ is a hypersurface for all $n_2\ge\dim(X)+1$. Therefore, for all $n_2\ge\dim(X)+1$ we have $c_0=g^\alpha$ for some positive integer $\alpha$, where $\mathcal{V}(g)=(X_1\times Q_2)^\vee$.

By the assumption on the Conjecture \ref{conj: exponents}, we conclude that $\alpha=1$ and $\deg(c_N)=0$. In particular, we derive
\[
2\,\mathrm{EDdegree}_F(X\times\PP(V_2)) = \deg((X\times Q_2)^\vee)\quad\forall n_2\ge\dim(X)+1\,.
\]
Conclusion follows from Corollary \ref{cor: stabilization XxQ}. 
\end{proof}
\end{proposition}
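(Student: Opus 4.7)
The plan is to combine the structural information on the ED polynomial (assuming Conjecture \ref{conj: exponents}) with a Weyman--Zelevinsky dimension count, and then feed the resulting degree identity into the stabilization already established in Corollary \ref{cor: stabilization XxQ}. The key bookkeeping device is the degree equation obtained by reading off the top and bottom coefficients of $\mathrm{EDpoly}_{(X\times\PP(V_2))^\vee,t}(\varepsilon^2)$: since this is a univariate polynomial in $\varepsilon^2$ of degree $N=\mathrm{EDdegree}_F(X\times\PP(V_2))$, whose coefficients are homogeneous in the entries of $t$, comparing total degrees in $t$ on both sides yields
\[
\deg(c_N)+2N=\deg(c_0).
\]
So the whole proof reduces to showing that, for $n_2\ge\dim(X)+1$, the right-hand side equals $\deg[(X\times Q_2)^\vee]$ and the left-hand side contribution $\deg(c_N)$ vanishes.

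First I would identify which of the three candidate hypersurfaces appearing in Proposition \ref{prop: vanishing locus lowest coefficient} actually contribute to $\mathcal V(c_0)$ in the stable range. By Weyman--Zelevinsky (Theorem \ref{thm: WZ94}), a Segre product $Y_1\times Y_2$ has hypersurface dual iff $\mathrm{codim}(Y_i^\vee)-1\le\dim(Y_{3-i})$ for $i=1,2$. Applying this with $Y_2=\PP(V_2)$, whose dual is empty and has codimension $n_2+1$, one sees that $(X\times\PP(V_2))^\vee$ and $[(X\cap Q_1)\times\PP(V_2)]^\vee$ both fail to be hypersurfaces as soon as $n_2\ge\dim(X)+1$ (resp.\ $n_2\ge\dim(X\cap Q_1)+1=\dim(X)$), while $(X\times Q_2)^\vee$ remains a hypersurface because $Q_2$ is self-dual of codimension $1$. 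Thus in the stable range only the factor $g$ of Conjecture \ref{conj: exponents} survives, i.e.\ $c_0=g^\alpha$ with $\mathcal V(g)=(X\times Q_2)^\vee$.

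Now I invoke Conjecture \ref{conj: exponents}: it forces $\alpha=1$ (the multiplicity of the factor $f^2$ collapses to $1$ for the surviving factor $g$) and simultaneously $c_N\in\R$, so $\deg(c_N)=0$. Plugging this into the degree identity above gives
\[
2\,\mathrm{EDdegree}_F(X\times\PP(V_2))=\deg\bigl[(X\times Q_2)^\vee\bigr]
\qquad\text{for all }n_2\ge\dim(X)+1.
\]
Finally, Corollary \ref{cor: stabilization XxQ} (applied with $m=\dim(X)$ and $Q_n=Q_2$, treating $n_2-1$ as the parameter $n$) says precisely that $\deg[(X\times Q_{n})^\vee]$ is independent of $n$ once $n\ge\dim(X)$, hence the right-hand side is constant in $n_2$ for $n_2\ge\dim(X)+1$. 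Dividing by $2$ yields the claimed stabilization.

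The only nontrivial step is the Weyman--Zelevinsky bookkeeping that isolates which dual varieties survive as hypersurfaces; once this is done the argument is essentially a one-line substitution. I expect no serious obstacle beyond being careful that the assumption that $(X_1\times X_2)\cap Q$ is reduced (needed to invoke Propositions \ref{prop: vanishing locus lowest coefficient} and \ref{prop: inclusion leading coefficient}, and hence Conjecture \ref{conj: exponents}) is indeed satisfied in our setting: here $X_1=X$, $X_2=\PP(V_2)$, and $(X\times\PP(V_2))\cap Q=(X\cap Q_1)\times\PP(V_2)\cup X\times(\PP(V_2)\cap Q_2)$, which is reduced precisely because $X\cap Q_1$ is reduced by hypothesis and $\PP(V_2)\cap Q_2=Q_2$ is a smooth (hence reduced) quadric.
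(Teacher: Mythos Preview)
Your proposal is correct and follows essentially the same route as the paper: the degree identity $\deg(c_N)+2N=\deg(c_0)$, the Weyman--Zelevinsky analysis isolating $(X\times Q_2)^\vee$ as the only surviving hypersurface, the invocation of Conjecture \ref{conj: exponents} to force $\alpha=1$ and $\deg(c_N)=0$, and the final appeal to Corollary \ref{cor: stabilization XxQ}. Your added verification that $(X\times\PP(V_2))\cap Q$ is reduced under the hypothesis on $X\cap Q_1$ is a useful detail that the paper's proof leaves implicit.
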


The previous result proves a stabilization property of the ED degree of the Segre product $X\times\PP(V_2)$. Furthermore, some experiments with the software  \texttt{Macaulay2} suggest the following conjecture which is a somewhat more general version of Conjecture \ref{conj: stabthm}.

\begin{conjecture}
Let $X\subset\PP(V_1)$ be a projective hypersurface such that $X\cap Q_1$ is reduced. Consider the Segre product $X\times\PP(V_2)$. Then
\[
\mathrm{EDdegree}_F(X\times\PP(V_2))=\sum_{j=0}^{n_1-1}\mathrm{EDdegree}_F(X\cap L_j)\,,
\]
where $L_j\subset\PP(V_1)$ is a general subspace of codimension $j$.
\end{conjecture}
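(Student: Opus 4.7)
The plan is to reduce both sides of the conjectured identity to explicit polar / Chern--Mather expressions on the hypersurface $X \subset \PP(V_1)$, and then to match them. The starting point is the proposition immediately preceding the conjecture: under Conjecture~\ref{conj: exponents} it gives
\[
2\,\mathrm{EDdegree}_F(X\times\PP(V_2)) \;=\; \deg\bigl((X\times Q_2)^\vee\bigr) \qquad \text{for all } n_2 \ge \dim X + 1,
\]
and by Corollary \ref{cor: stabilization XxQ} this number is independent of $n_2$ in the stable range. So the conjecture is equivalent to the intrinsic identity
\[
\tfrac12\,\deg\bigl((X\times Q_{n_1-1})^\vee\bigr) \;=\; \sum_{j=0}^{n_1-1}\mathrm{EDdegree}_F(X\cap L_j).
\]

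The next step is to expand the left-hand side via Lemma \ref{lem: expressionalphai} specialized to $d=2$ and $n=m=n_1-1$, so that it becomes the linear combination $\tfrac12\sum_{i}\alpha_i(n_1-1,n_1-1,2)\,\deg(c_i^{\mathsmaller{M}}(X))$ of Chern--Mather degrees of $X$. In parallel I would expand each summand $\mathrm{EDdegree}_F(X\cap L_j)$: its generic counterpart has a well-known polar-degree expression (DHOST Theorem~5.8 and equation \eqref{eq: Pie78}), while the passage from the generic to the Frobenius ED degree is controlled by the intersection $(X\cap L_j)\cap Q_1$, which remains reduced (for generic $L_j$) by Bertini together with the standing hypothesis that $X\cap Q_1$ is reduced. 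The Chern--Mather classes of $X\cap L_j$ are related to those of $X$ by a standard restriction formula, after which each $\mathrm{EDdegree}_F(X\cap L_j)$ becomes a linear combination $\sum_{i}\beta_{i,j}\,\deg(c_i^{\mathsmaller{M}}(X))$ with explicit binomial coefficients $\beta_{i,j}$.

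Matching coefficients of $\deg(c_i^{\mathsmaller{M}}(X))$ on the two sides reduces the conjecture to the combinatorial identity
\[
\tfrac12\,\alpha_i(n_1-1,n_1-1,2) \;=\; \sum_{j=0}^{n_1-1}\beta_{i,j}\qquad(0\le i\le n_1-1),
\]
which I would attempt to prove via a Zeilberger-type recurrence, in the spirit of identities \eqref{eq: id1}--\eqref{eq: id3equiv2} used in the proof of Theorem \ref{thm: stabilization XxQ}. In that proof the crucial binomial identity \eqref{eq: masterbinomial} drops out of such a recurrence, and I expect the present identity to have the same flavour because both sides are governed by the same flag-of-linear-sections geometry.

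The main obstacle is the precise computation of $\mathrm{EDdegree}_F(X\cap L_j)$: the Frobenius ED degree records only those critical points staying at finite distance from $Q_1$, and for small codimension $j$ there are nontrivial correction terms coming from $(X\cap L_j)\cap Q_1$ whose combined behaviour in $j$ must be controlled. A viable back-up plan, avoiding Conjecture \ref{conj: exponents} altogether, is to imitate the direct specialization argument of Corollary \ref{cor: stabED}: choose a tensor $t\in V_1\otimes V_2$ whose flattening $V_1\to V_2^\ast$ has a prescribed eigen-structure with an $M$-invariant flag $L_0\supset L_1\supset\cdots\supset L_{n_1-1}$, reduce the critical-point equations on $X\times\PP(V_2)$ to those of the Rayleigh quotient of $M=tt^{\mathsmaller T}$ on $X$, and bijectively stratify the critical locus according to the stratum $L_j\setminus L_{j+1}$ that contains a given critical point; each stratum should then match the critical locus on $X\cap L_j$, giving the desired identity term by term.
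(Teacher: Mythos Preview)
The statement you are attempting to prove is presented in the paper as a \emph{conjecture}, not as a theorem; the paper offers no proof, only the remark that it is ``suggested by some experiments with the software \texttt{Macaulay2}'' and that it is ``a somewhat more general version of Conjecture~\ref{conj: stabthm}'' (which is likewise left open). So there is no paper proof to compare against, and any argument you produce would be new.

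That said, your proposal does not constitute a proof, and the gaps are structural rather than cosmetic. First, your reduction step invokes the proposition immediately preceding the conjecture, but that proposition is explicitly conditional on Conjecture~\ref{conj: exponents}, which is also open in the paper; so your argument would at best prove one conjecture assuming another. Second, and more seriously, the step where you write $\mathrm{EDdegree}_F(X\cap L_j)$ as $\sum_i \beta_{i,j}\deg(c_i^{\mathsmaller{M}}(X))$ is not justified: the polar-class formulas of \cite{DHOST} and equation~\eqref{eq: Pie78} compute the \emph{generic} ED degree, and the Frobenius ED degree differs from it by correction terms governed by how $X\cap L_j$ meets the isotropic quadric $Q_1\cap L_j$. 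You acknowledge this yourself as ``the main obstacle,'' but you do not indicate how to control these corrections uniformly in $j$, nor why they should organize into the claimed binomial pattern. There is no known closed formula for $\mathrm{EDdegree}_F$ of a general (non-Segre) variety in terms of its Chern--Mather classes alone. Your back-up plan via specialization is suggestive but far too vague: the stratification of critical points by an $M$-invariant flag is plausible when $X$ is a Segre variety (this is essentially the mechanism behind Theorem~\ref{thm: specialization}), but for an arbitrary hypersurface $X$ there is no reason the critical points of the Rayleigh quotient of $M=tt^{\mathsmaller T}$ restricted to $X$ should split cleanly along such a flag, and you would need to make this precise before it becomes an argument.
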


\section*{Acknowledgements}

We thank Jay Pantone for very useful conversations. We thank the referees for useful comments. The first two authors are members of INDAM-GNSAGA. The third author would like to thank the Department of Mathematics of Universit\`{a} di Firenze, where this project started in June 2018, for the warm hospitality and financial support.
The first author is supported by the H2020-MSCA-ITN-2018 project POEMA.
The second author is partially supported by the Academy of Finland Grant 323416.
The third author is supported by Vici Grant 639.033.514 of Jan Draisma from the Netherlands Organisation for Scientific Research.

\end{document}